\documentclass[reqno,11pt,centertags]{article}
\usepackage{amsmath,amsthm,amscd,amssymb,latexsym,upref}
\date{\today}                            

\input epsf
\usepackage{epsfig}
\usepackage[T2A,OT1]{fontenc}
\usepackage[ot2enc]{inputenc}
\usepackage[russian,english]{babel}
\usepackage[margin=3.5 cm]{geometry}
\usepackage{epic,eepicemu}
\usepackage[all]{xy}
\usepackage{enumitem}
\usepackage[center]{caption}

\usepackage{xcolor}

\usepackage{hyperref}


\newcommand{\bbN}{{\mathbb{N}}}
\newcommand{\bbR}{{\mathbb{R}}}

\newcommand{\bbC}{{\mathbb{C}}}

\DeclareMathAlphabet{\mathpzc}{OT1}{pzc}{m}{it}

\newcommand{\cL}{{\mathcal{L}}}

\newcommand{\cP}{{\mathcal{P}}}

\newcommand{\e}{{\epsilon}}

\renewcommand{\k}{\varkappa}

\newcommand{\pd}{{\partial}}

\def\restr#1{\,\vrule\,\lower1ex\hbox{$#1$}}

\def\a{\alpha}
\def\b{\beta}
\def\d{\delta}

\def\e{\epsilon}

\def\k{\kappa}

\def\O{\Omega}

\def\s{\sigma}

\def\t{\theta}

\newcommand{\dd}{\mathrm{d}}


\renewcommand{\Re}{\text{\rm Re}\,}
\renewcommand{\Im}{\text{\rm Im}\,}

\newcommand{\const}{\text{\rm const}}

\allowdisplaybreaks \numberwithin{equation}{section}
\newtheorem{theorem}{Theorem}[section]

\newtheorem{lemma}[theorem]{Lemma}
\newtheorem{proposition}[theorem]{Proposition}

\newtheorem{corollary}[theorem]{Corollary}

\theoremstyle{definition}
\newtheorem{definition}[theorem]{Definition}

\newtheorem{remark}[theorem]{Remark}
\newtheorem{problem}[theorem]{Problem}

\newtheorem{example}[theorem]{Example}

\date{\today}

\author{B.~Eichinger\thanks{Supported by the Austrian Science Fund FWF, project no: J4138-N32}\ \ \ and \ \   P.~Yuditskii\thanks{Supported by the Austrian Science Fund FWF, project no: P32855-N.}}

\title{Pointwise Remez inequality}
\begin{document}
	\maketitle
	
	\begin{center}
		\textit{Dedicated  A. Aptekarev on the occasion of his 65-th birthday}
	\end{center}

\begin{abstract}
	The standard well-known Remez inequality gives an upper estimate of the values of polynomials on $[-1,1]$ if they are bounded by $1$ on a subset of $[-1,1]$ of fixed Lebesgue measure. The extremal solution is given by the rescaled Chebyshev polynomials for one interval. Andrievskii asked about the maximal value of polynomials at a fixed point, if they are again bounded $1$ on a set of fixed size. We show that the extremal polynomials are either Chebyshev (one interval) or Akhiezer polynomials (two intervals) and prove Totik-Widom bounds for the extremal value, thereby providing a complete asymptotic solution to the Andrievskii problem.
\end{abstract}

\section{Introduction}

Based on several results by T.~Erd\'{e}lyi, E.~B. Saff and himself \cite{Anii1, Anii2, Erdelyi92, ErdelyiLiSaff94}, V. Andrivskii posed the following problem.

\begin{problem}\label{prob:ExtremalFixedSet}
	Let $\cP_n$ be the collection of polynomials of degree at most $n$.
	Let $E$ be a closed subset of $[-1,1]$, and $|E|$ denote its Lebesgue measure.  For $x_0\in[-1,1]$ define 
	\begin{equation}\label{eq:upperEnvSet}
	M_n(x_0,E)=\sup\{|P_n(x_0)|: P_n\in \cP_n,  |P_n(x)|\leq 1 \ \text{for}\ x\in E\}. 
	\end{equation}
	For $\delta \in (0,1)$ find
	\begin{equation}\label{eq:upperEnvelope}
	L_{n,\delta}(x_0)=\sup\{M_n(x_0,E): E\ \text{such that}\ |E|\geq2-2\delta\}. 
	\end{equation}

\end{problem}
Let us comment the setting with the following three evident remarks:
\begin{enumerate}
	\item $L_{n,\delta}(x_0)$ is even, thus we will consider only $x_0\in[-1,0]$.
	\item $\cL_{n,\delta}:=\sup_{x_0\in[-1,0]}L_{n,\delta}(x_0)$ is the famous Remez constant  \cite{Remes36}. It is attained at the endpoint $-1$
	by the the Chebyshev polynomial $ R_{n,\d}(x)$ for the interval $[-1+2\delta,1]$, i.e.,
	\begin{align*}
	\cL_{n,\delta}=\mathfrak{T}_n\left(\frac{1+\d}{1-\d}\right),\quad R_{n,\d}(x)=\mathfrak{T}_n\left(\frac{\d- x}{1-\d}\right),
	\end{align*}
	where
	$\mathfrak{T}_n$ denotes the standard Chebyshev polynomial of degree $n$ associated to $[-1,1]$,
	\begin{align*}
	\mathfrak{T}_n(x)=\frac{1}{2}\left((x+\sqrt{x^2-1})^n+(x-\sqrt{x^2-1})^n\right).
	\end{align*}
	We will henceforth call $R_{n,\d}$ the Remez polynomial. 
	\item Clearly, for Problem \ref{eq:upperEnvSet}, $ R_{n,\d}(x)$ can not be extremal for all $x_0\in[-1,0]$ as soon as $\d$ is sufficiently small. Let $\d=1/2$. Then 
	$R_{n,1/2}(0)=1$, while for the so called Akhiezer polynomial $A_{2m,\d}(x)$ we get
	$A_{2m,1/2}(0)=\mathfrak{T}_m\left(5/3\right)>1$. Recall \cite{Akh1} or \cite[Chapter 10]{AkhElliptic}
	$$
	A_{2m,\delta}(x)=\mathfrak{T}_m\left(\frac{1+\d^2-2x^2}{1-\d^2}\right)
	$$
	is the even Chebyshev polynomial on the set $[-1,1]\setminus(-\d,\d)$.
\end{enumerate}

Andrievskii raised his question on several international conferences, including Jaen Conference on Approximation Theory, 2018. The third remark highlights that the problem is non trivial. We found it highly interesting and in this paper we provide its complete asymptotic solution. 

In \cite{Akh1, Akh2} Akhiezer studied various extremal problems for polynomials bounded on two disjoint intervals $E(\a,\d)=[-1,1]\setminus(\a-\d,\a+\d)$, $\d-1<\a\le 0$, see also \cite[Appendix Section 36 and Section 38 in German translation]{Akh3}.

\begin{definition}\label{def:AkhiezerPol}
	We say that a polynomial $A_{n,\a,\delta}(x)$ is the Akhiezer polynomial in $E(\a,\d)$ with respect to  the {\em internal} gap if it solves the following extremal problem 
	\begin{equation}\label{28may1}
	A_{n,\a,\delta}(x_0)=\sup\{|P_n(x_0)|: \ P_n\in\cP_n,\ |P_n(x)|\le 1\ \text{for}\ x\in E(\a,\d)\},
	\end{equation}
	where $x_0\in(\a-\d,\a+\d)$.
\end{definition}

We describe properties of Akhiezer polynomials in Section \ref{sec:CombRep}. Note, in particular, that the extremal property of $A_{n,\a,\d}(x)$ does not depend on which point $x_0\in(\a-\d,\a+\d)$ was fixed in \eqref{28may1}.

In Section \ref{sec:Reduction} we prove the following theorem. 
\begin{theorem}\label{thm:AkhRemez}
	The extremal value $L_{n,\d}(x_0)$ is assumed either on the Remez polynomial $R_{n,\d}(x)$ or on an Akhiezer polynomial
	$A_{n,\a,\delta}(x)$ with a suitable $\a$.
\end{theorem}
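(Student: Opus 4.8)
The plan is to produce a polynomial realizing $L_{n,\delta}(x_0)$, recognize it as a $T$-polynomial, express the extremal value through the Green's function of the complement of its band set, and then use the Lebesgue-measure constraint to force that band set to have at most two bands; the one- and two-band cases are exactly the Remez and Akhiezer configurations. First I would check that the double supremum is attained: since $|E|\ge 2-2\delta>0$ makes every admissible $E$ infinite, $\|\cdot\|_{C(E)}$ is a genuine norm on the finite-dimensional space $\cP_n$, so each $M_n(x_0,E)$ is finite and attained, and a maximizing sequence $(E_k,P_k)$ for $L_{n,\delta}(x_0)$ has a subsequence with $P_k\to P^*$ in $\cP_n$; together with a routine limiting argument for the sets this produces an admissible extremal pair $(E^*,P^*)$, $|P^*(x_0)|=L_{n,\delta}(x_0)$. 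By evenness I take $x_0\in[-1,0]$ and $P^*(x_0)>0$. The decisive normalization is to replace $E^*$ by $\{x\in[-1,1]:|P^*(x)|\le1\}$; this only enlarges $E^*$, so admissibility and extremality persist, and now $[-1,1]\setminus E^*$ is a finite union of open intervals on which $|P^*|>1$. If $L_{n,\delta}(x_0)\le 1$ the assertion is trivial, so I assume $L_{n,\delta}(x_0)>1$; then $x_0$ lies in one such interval $G_0$ with $|G_0|\le 2\delta$, and $P^*$ is extremal for $M_n(x_0,E^*)$.

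Next I would establish the $T$-polynomial structure of $P^*$ by the classical alternation/duality argument for the point functional $P\mapsto P(x_0)$: if, on the active set $X=\{x\in E^*:|P^*(x)|=1\}$ carrying the signs of $P^*$, some $Q\in\cP_n$ had $Q(x_0)>0$ and $P^*(x)Q(x)<0$ for all $x\in X$, then $P^*+\e Q$ would beat $P^*$ for small $\e>0$; ruling this out (Farkas/Carath\'eodory) yields a quadrature identity $\mathrm{ev}_{x_0}=\sum_j\lambda_j P^*(x_j)\,\mathrm{ev}_{x_j}$ on $\cP_n$ with $x_j\in X$, $\lambda_j\ge0$, and comparison with the Lagrange representation at the nodes $x_j$ forces the signs $P^*(x_j)$ to alternate, with a single exception straddling the component $G_0$. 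Hence $P^*$ sweeps all of $[-1,1]$ on each band of $F^*:=\{x\in\bbR:|P^*(x)|\le1\}$, i.e.\ $P^*$ is the $T$-polynomial of $F^*$, so $\log|P^*(z)|=n\,g_{\overline{\bbC}\setminus F^*}(z,\infty)+O(1)$ and, for $z=x_0$ in the gap $G_0$, $|P^*(x_0)|$ is a strictly increasing function of $g_{\overline{\bbC}\setminus F^*}(x_0,\infty)$. Thus computing $L_{n,\delta}(x_0)$ reduces to maximizing $g_{\overline{\bbC}\setminus F^*}(x_0,\infty)$ over band sets $F^*$ of degree-$n$ $T$-polynomials with $|F^*\cap[-1,1]|\ge 2-2\delta$; moving band endpoints within this finite-dimensional family and using domain monotonicity of $g(x_0,\cdot)$, I would further arrange $F^*\subseteq[-1,1]$ and $|F^*|=2-2\delta$.

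The crux, and the step I expect to be hardest, is to show $F^*$ has at most two bands. If it had $k\ge 3$, then $[-1,1]\setminus F^*$ would have at least two components, one of which, $G_1$, avoids $x_0$; the idea is to perturb $F^*$ inside the manifold of degree-$n$ $T$-polynomials (the locus cut out by the rationality conditions on the harmonic measures of the bands), widening $G_0$ and narrowing $G_1$ while preserving $|F^*|=2-2\delta$, and to show that this strictly increases $g_{\overline{\bbC}\setminus F^*}(x_0,\infty)$, contradicting extremality. Morally this reflects the polarization/symmetrization fact that, among compact $F\subset[-1,1]$ of measure $2-2\delta$ with $x_0\notin F$, the quantity $g_{\overline{\bbC}\setminus F}(x_0,\infty)$ is largest precisely when the complementary gap is a single interval about $x_0$ (which also follows softly from the Bernstein--Walsh bound $M_n(x_0,E)\le\exp(n\,g_{\overline{\bbC}\setminus E}(x_0,\infty))$). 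The genuine difficulty is that $F^*$ must remain the band set of an honest degree-$n$ $T$-polynomial, so the comparison has to be carried out through Hadamard-type variational formulas for $g(x_0,\cdot)$ and the comb (elliptic-function) parametrization of two-band $T$-polynomials; this is where Akhiezer's classical two-interval theory should really enter. Granting this step, $G_0$ is the only gap and $|G_0|=2\delta$.

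Finally I would identify the two remaining cases. If $F^*$ is a single band $[a,b]$ of length $2-2\delta$ with $x_0\notin[a,b]$, then since $x_0\in[-1,0]$ extremality pushes the band as far from $x_0$ as possible inside $[-1,1]$, forcing $b=1$, $a=-1+2\delta$, so $P^*$ is the Chebyshev polynomial of $[-1+2\delta,1]$, i.e.\ $P^*=R_{n,\delta}$. If $F^*$ has two bands, it is therefore $[-1,\a-\d]\cup[\a+\d,1]=E(\a,\d)$ with $\a$ the midpoint of $G_0$ and $x_0\in(\a-\d,\a+\d)$; since $|E(\a,\d)|=2-2\delta$, the chain $P^*(x_0)\le M_n(x_0,E(\a,\d))\le L_{n,\delta}(x_0)=P^*(x_0)$ shows $P^*$ is extremal for $M_n(x_0,E(\a,\d))$, and uniqueness of the extremal polynomial for a point functional gives $P^*=A_{n,\a,\d}$, as claimed.
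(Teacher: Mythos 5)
Your overall skeleton is reasonable and in places parallels the paper (existence of an extremal pair; $T$-polynomial structure via the comb/alternation picture; reduction to counting bands; identification of the one- and two-band cases). However, there is a genuine gap exactly where you flag it, and also a structural error in how you set up the band count.

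The crux step --- that the extremal configuration has only one gap inside $[-1,1]$ --- is not proved in your write-up; you acknowledge it is ``the step I expect to be hardest'' and gesture at polarization/symmetrization plus the Bernstein--Walsh bound. Neither closes the gap. Bernstein--Walsh only gives $M_n(x_0,E)\le e^{nG(x_0,\infty;\overline{\bbC}\setminus E)}$, which is an upper bound on the wrong object and says nothing about whether the finite-degree extremizer actually has a single internal gap. A symmetrization statement about Green's functions at fixed measure also does not directly apply, precisely because, as you note, the perturbation must stay inside the variety of degree-$n$ $T$-polynomial band sets (the rationality conditions on band harmonic measures). The paper's Section~3 supplies exactly the missing argument: it works with the comb map $\t$ and the Loewner-type transition functions $w_\e=\t_\e^{-1}\circ\t$, derives the infinitesimal vector field $\dot w(z)=\sum_k \frac{1-z^2}{1-c_k^2}\frac{\s_k}{c_k-z}$ (plus translation terms for the boundary degenerations), imposes the two linear constraints $\dot w(x_0)=0$ and $\sum_j(\dot w(b_j)-\dot w(a_j))=0$ to preserve $P^*(x_0)$ and $|E^*|$, and shows that the remaining degree of freedom in the residues $\s_k$ can always be used to decrease (eventually erase) one of the extra slits. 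This is a concrete, finite-dimensional variational argument and is really the content of the theorem; your proposal defers it to ``Hadamard-type variational formulas and the comb parametrization,'' which is the right place to look but is not a proof.

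A second, related problem is the normalization ``I would further arrange $F^*\subseteq[-1,1]$ and $|F^*|=2-2\delta$,'' and the later case split ``if $F^*$ has two bands.'' This is incompatible with the extremal structure: for $x_0$ in an internal gap the extremal polynomial $A_{n,\a,\d}$ generically has $F^*=A_{n,\a,\d}^{-1}([-1,1])=E(\a,\d)\cup I_n$ with an additional band $I_n$ outside $[-1,1]$ (cases (i) and (iv) of Corollary~2.4, pictured in Figures~1--2), or $F^*$ extending past $\pm1$ (cases (ii), (iii)). You cannot in general force $F^*\subseteq[-1,1]$ while keeping $F^*$ the band set of a degree-$n$ $T$-polynomial; the rationality constraints already pin the configuration down. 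The correct statement to prove is that $E^*=F^*\cap[-1,1]$ has a single internal gap (equivalently at most two bands inside $[-1,1]$), not that $F^*$ itself has at most two bands, and the paper explicitly tracks the extra component $I_n$ throughout the variational argument precisely because it cannot be removed. Your final chain-of-inequalities argument identifying the two-band case with $A_{n,\a,\d}$ via uniqueness of the extremizer of $M_n(x_0,E(\a,\d))$ is fine as stated, but it rests on the unproved reduction.
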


Our main result, the asymptotic behaviour of $L_{n,\d}(x_0)$, is presented in Section \ref{sec:Asymptotics}.
Akhiezer provided asymptotics for his polynomials associated to two intervals in terms of elliptic functions (although the concept of the \textit{complex Green function} is used already in \cite{Akh1}). Nowadays the language of potential theory is so common and widely accepted, see  e.g. \cite{Anii2, ChriSiZiYuDuke,EichJAT17,KNT, Widom}, that we will formulate our asymptotic  result using this terminology.

Let $G_{\a,\d}(z,z_0)$ be the Green function in the domain $\Omega:=\overline\bbC\setminus E(\a,\d)$ with respect to $z_0\in\Omega$. In particular, we set $G_{\a,\d}(z)=G_{\a,\d}(z,\infty)$. Respectively, $G_{\d}(z)$ is the Green function of the domain 
$\overline\bbC\setminus E(\d)$ with respect to infinity, where $E(\d)=[-1+2\d,1]$. It is well known, that
\begin{align}\label{eq:RemezRep}
R_{n,\d}(x)=\frac{e^{n G_{\d}(x)}+e^{-n G_{\d}(x)}} 2,\quad x\in[-1, -1+2\d],
\end{align}
and
\begin{align}\label{eq:GreenBoundary}
\lim_{\a\to -1+\d}G_{\a,\d}(x)=G_{\d}(x),\quad x\in(-1, -1+2\d).
\end{align}

\begin{lemma}\label{lem:UpperEnvelopeGreen}
	Let $\Phi_\d(x)$ be the upper envelope
	\begin{align}\label{eq:upperEnvelopeGreen}
		\Phi_\d(x)=\sup_{\a\in (\d-1,0]}G_{\a,\d}(x).
	\end{align}
	If for $x_0$ the supremum is attained for some internal point $\a\in (\d-1,0]$, then $x_0=x_0(\a)$ is a unique solution of the equation
	$$
	\pd_\a G_{\a,\d}(x)=0.
	$$
	Otherwise, it is attained as $\a\to \d-1$ and $\Phi_\d(x_0)=G_{\d}(x_0)$.
\end{lemma}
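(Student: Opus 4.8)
The plan is to fix $x_0\in(-1,0]$ (the case $x_0=-1$ is trivial, since then $G_{\a,\d}(-1)=0$ for every admissible $\a$) and to study $g(\a):=G_{\a,\d}(x_0)$ on the interval $J:=(\d-1,0]\cap(x_0-\d,x_0+\d)$, noting that $g\equiv0$ off $J$, that $g$ is positive and real–analytic on $J$ (the Green function depends analytically on the endpoints of $E(\a,\d)$), that $g(\a)\to0$ as $\a\to x_0\pm\d$ (then $x_0$ becomes an endpoint of the gap), and that $g(\a)\to G_\d(x_0)$ as $\a\to(\d-1)^+$ by \eqref{eq:GreenBoundary}. Since $\sup_J g=\Phi_\d(x_0)>0$, the lemma will follow from two claims: (a) for every admissible $\a$ the equation $\pd_\a G_{\a,\d}(x)=0$ has a \emph{unique} solution $x_0(\a)$ in the open gap $(a_1,a_2):=(\a-\d,\a+\d)$; and (b) $\Phi_\d(x_0)$ is not attained at $\a=0$ unless $x_0=0$.

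For (a) I would start from the standard representation $G'_{\a,\d}(x)=(c-x)|R(x)|^{-1/2}$ for $x\in(a_1,a_2)$, where $R(x)=(x^2-1)(x-a_1)(x-a_2)$ and $c=c(\a)\in(a_1,a_2)$ is the critical point of $G_{\a,\d}$, fixed by the vanishing–period condition $\int_{a_1}^{a_2}(t-c)|R(t)|^{-1/2}\,dt=0$. Because $\pd_\a a_1=\pd_\a a_2=1$, one has $\pd_\a|R(x)|=2(1-x^2)(x-\a)$, hence
\[
\pd_x\pd_\a G_{\a,\d}(x)=\pd_\a\!\left(\frac{c-x}{\sqrt{|R(x)|}}\right)=(1-x^2)\,|R(x)|^{-3/2}\,N(x),\qquad N(x):=c'(x-a_1)(a_2-x)-(c-x)(x-\a).
\]
Here $N$ has degree $\le2$ with leading coefficient $1-c'$ and $N(a_1)=\d(c-a_1)>0$, $N(a_2)=\d(a_2-c)>0$. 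Thus, \emph{provided} $c'(\a)\ge1$, $N$ is concave and therefore positive on all of $[a_1,a_2]$, so $x\mapsto\pd_\a G_{\a,\d}(x)$ is strictly increasing on the gap. On the other hand, differentiating $G_{\a,\d}(x)=\int_{a_1}^{x}(c-t)|R(t)|^{-1/2}dt$ in $\a$ and isolating the moving endpoint (write $(c-t)|R(t)|^{-1/2}=h_\a(t)(t-a_1)^{-1/2}$ with $h_\a$ smooth and $h_\a(a_1)>0$, so the displacement of the lower limit contributes a term $\sim-h_\a(a_1)(x-a_1)^{-1/2}$) shows $\pd_\a G_{\a,\d}(x)\to-\infty$ as $x\to a_1^+$ and, symmetrically, $\to+\infty$ as $x\to a_2^-$. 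A strictly increasing function running from $-\infty$ to $+\infty$ has exactly one zero, which proves (a).

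The main obstacle is the inequality $c'(\a)\ge1$. Here I would substitute $t=\a+\d s$, which normalises the limits of integration: the defining relation for $c$ becomes $\int_{-1}^{1}(\a+\d s-c)\,d\mu(s)=0$ with $d\mu(s)=[(1-(\a+\d s)^2)(1-s^2)]^{-1/2}ds$, so $c=I_1/I_0$ where $I_j=\int_{-1}^{1}(\a+\d s)^j\,d\mu$. Differentiation in $\a$ is now routine ($\pd_\a(\a+\d s)=1$) and, after a short computation, gives
\[
c'(\a)-1=\frac1{I_0}\int_{-1}^{1}\frac{(\a+\d s)\bigl(\a+\d s-c\bigr)}{1-(\a+\d s)^2}\,d\mu(s).
\]
Both $s\mapsto\a+\d s$ and $s\mapsto(\a+\d s)\bigl(1-(\a+\d s)^2\bigr)^{-1}$ are strictly increasing on $[-1,1]$ (note $|\a+\d s|\le\max(|a_1|,|a_2|)<1$), and $\int(\a+\d s-c)\,d\mu=0$; Chebyshev's integral inequality for monotone functions, applied to the probability measure $d\mu/I_0$, then forces the right–hand side to be nonnegative, i.e.\ $c'\ge1$. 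Identifying this representation of $c$ and extracting the monotonicity that drives the correlation inequality is the delicate point; the rest is bookkeeping.

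Finally I would assemble (a) and (b). The reflection symmetry $G_{\a,\d}(-z)=G_{-\a,\d}(z)$ makes $x\mapsto\pd_\a G_{\a,\d}(x)\big|_{\a=0}$ an odd function of $x$, so $x_0(0)=0$; being strictly increasing (by (a) at $\a=0$), it is negative on $(-\d,0)$, whence $g'(0)=\pd_\a G_{\a,\d}(x_0)\big|_{\a=0}<0$ when $x_0\in(-\d,0)$, while $g(0)=0<\Phi_\d(x_0)$ when $x_0\le-\d$. Hence for $x_0<0$ the supremum is attained neither at $\a=0$ nor at $\a=x_0\pm\d$ (where $g=0$), so if it is attained at some $\a\in(\d-1,0]$ that point is interior to $J$; the first–order condition then gives $\pd_\a G_{\a,\d}(x_0)=0$, i.e.\ $x_0=x_0(\a)$ by the uniqueness in (a) (for $x_0=0$ the attaining point is $\a=0$, consistent with $x_0(0)=0$). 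And if the supremum is not attained on $(\d-1,0]$ at all, then, $g$ being continuous there, it can only be approached as $\a\to(\d-1)^+$, so $\Phi_\d(x_0)=\lim_{\a\to(\d-1)^+}G_{\a,\d}(x_0)=G_\d(x_0)$.
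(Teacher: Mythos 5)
Your argument follows the paper's strategy exactly: differentiate the elliptic-integral representation of $G_{\a,\d}$ to obtain
$\pd_x\pd_\a G_{\a,\d}(x)$
as a positive factor times an expression that is positive once $\dot c(\a)\ge 1$, then use the $(t-a_1)^{-1/2}$ singularity at the moving endpoints to see $\pd_\a G_{\a,\d}(x)\to\mp\infty$ as $x\to a_1^+,\ a_2^-$, and conclude uniqueness of the zero $x_0(\a)$. Your reorganization into the concave quadratic $N(x)$ with $N(a_1),N(a_2)>0$ is equivalent to the paper's bound $\inf_{(a,b)}\tfrac{(c-x)(x-\a)}{(x-a)(x-b)}>-1$. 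The one genuine difference is the treatment of the key inequality $\dot c(\a)>1$: the paper merely asserts it, citing without proof that $|c(\a)-b(\a)|$ increases with $|\a|$, whereas you actually prove it. Your substitution $t=\a+\d s$, the identity
$c'(\a)-1=I_0^{-1}\int_{-1}^1\tfrac{\xi(\xi-c)}{1-\xi^2}\,d\mu$ with $\xi=\a+\d s$, and the Chebyshev correlation inequality for the two increasing functions $\xi$ and $\xi/(1-\xi^2)$ under the probability measure $d\mu/I_0$ (using $\int(\xi-c)\,d\mu=0$, with strictness since $\xi$ is nonconstant) constitute a clean, self-contained proof of the fact the paper takes for granted. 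This is a real improvement: the paper's assertion about $|c-b|$ is itself nonobvious and is left unjustified, so your version closes that gap. The remaining bookkeeping (the odd-in-$x$ symmetry of $\pd_\a G_{\a,\d}(x)$ at $\a=0$, ruling out $\a=0$ as a one-sided maximizer for $x_0\ne0$, the continuity argument forcing the supremum to be approached as $\a\to(\d-1)^+$ otherwise) is correct and a slightly more careful rendition of the paper's assembly.
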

Note that the only nontrivial claim in Lemma \ref{lem:UpperEnvelopeGreen} is the uniqueness of $x_0(\a)$. We will provide a proof of this fact in Section \ref{sec:AsympDiagram} und use it to give a description of the upper envelope $\Phi_\d(x_0)$ in Proposition \ref{prop:UpperEnvelope}.

Our main theorem below shows that the asymptotics of $L_{n,\d}(x_0)$ are described by the upper envelope $\Phi_\d(x_0)$.
\begin{theorem}\label{thm:TotikWidomBounds}
	The following limit exists
	$$
	\lim_{n\to\infty}\frac 1 n\log 2L_{n,\d}(x_0)=\Phi_\d(x_0).
	$$
	To be more precise, if $\Phi_\d(x_0)=G_{\d}(x_0)$, then 
	\begin{align}\label{eq:TotikWidomRem}
	\log 2L_{n,\d}(x_0)=n \Phi_\d(x_0)+o(1).
	\end{align}
	If $\Phi_\d(x_0)=G_{\d,\a}(x_0)$ for some $\a\in(\d-1,0]$,  then the Totik--Widom bound 
	\begin{align}\label{eq:TotikWidomAkh}
		\log 2L_{n,\d}(x_0)=n \Phi_\d(x_0)+O(1)
	\end{align}
	holds.
\end{theorem}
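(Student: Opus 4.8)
The plan is to combine the reduction in Theorem \ref{thm:AkhRemez} with sharp potential-theoretic asymptotics for the two families of extremal polynomials that can occur. By Theorem \ref{thm:AkhRemez}, for each fixed $x_0$ the quantity $L_{n,\d}(x_0)$ equals $|R_{n,\d}(x_0)|$ or $\sup_\a |A_{n,\a,\d}(x_0)|$, so the analysis splits into the one-interval and two-interval regimes and then one takes the better of the two. First I would treat the Remez branch: from the explicit representation \eqref{eq:RemezRep}, $R_{n,\d}(x_0)=\cosh(nG_\d(x_0))$ on $[-1,-1+2\d]$, so $\log 2R_{n,\d}(x_0)=nG_\d(x_0)+\log(1+e^{-2nG_\d(x_0)})=nG_\d(x_0)+o(1)$, which is exactly \eqref{eq:TotikWidomRem} with an exponentially small, not merely bounded, error. (For $x_0$ outside $[-1,-1+2\d]$ the Remez polynomial is bounded by $1$ and never competes.)

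Next I would handle the Akhiezer branch. The goal is a two-sided bound $\log 2A_{n,\a,\d}(x_0)=nG_{\a,\d}(x_0)+O(1)$ uniformly in $\a$, which after optimizing over $\a$ and invoking Lemma \ref{lem:UpperEnvelopeGreen} / Proposition \ref{prop:UpperEnvelope} yields \eqref{eq:TotikWidomAkh}. The lower bound $2A_{n,\a,\d}(x_0)\ge e^{nG_{\a,\d}(x_0)}$ is soft: the comparison polynomial built from the complex Green function (a Fej\'er-type or Faber construction on the two-interval set $E(\a,\d)$) has sup-norm $\le 1$ on $E(\a,\d)$ up to a bounded multiplicative constant and value $\sim e^{nG_{\a,\d}(x_0)}$ at $x_0$; this is the standard Totik--Widom-type lower estimate on finite gap sets, using that the harmonic measure of $E(\a,\d)$ has a smooth density and the Widom factor is bounded. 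For the matching upper bound one uses the Bernstein--Walsh inequality: any $P_n\in\cP_n$ with $\|P_n\|_{E(\a,\d)}\le1$ satisfies $|P_n(x_0)|\le e^{nG_{\a,\d}(x_0)}$; combined with the definition \eqref{28may1} this gives $A_{n,\a,\d}(x_0)\le e^{nG_{\a,\d}(x_0)}$, hence $\log 2A_{n,\a,\d}(x_0)\le nG_{\a,\d}(x_0)+\log 2$.

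The main obstacle is making the Akhiezer branch \emph{uniform in} $\a$, especially as $\a\to\d-1$, where the left component of $E(\a,\d)$ shrinks to a point and the two-interval picture degenerates into the one-interval picture \eqref{eq:GreenBoundary}. In that limit the Widom factor and the constants in the lower bound must be controlled so they do not blow up; I would do this by a compactness argument on the parameter interval $\a\in[\d-1,0]$ together with the continuity \eqref{eq:GreenBoundary} of $G_{\a,\d}$, treating a neighborhood of $\a=\d-1$ separately by comparing directly with the Remez polynomial. One also has to check that the $\sup_\a$ in the reduced problem is attained (or nearly attained) at the value of $\a$ predicted by $\pd_\a G_{\a,\d}(x_0)=0$, so that the exponential rate of the extremal polynomial really matches $\Phi_\d(x_0)=\sup_\a G_{\a,\d}(x_0)$; here the uniqueness statement in Lemma \ref{lem:UpperEnvelopeGreen} is what guarantees that the optimal $\a$ is well-defined and that no competing subexponential-scale terms interfere. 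Finally, combining the two branches: since $\Phi_\d(x_0)=\max\{G_\d(x_0),\sup_\a G_{\a,\d}(x_0)\}$ and the Remez error is $o(1)$ while the Akhiezer error is $O(1)$, the stated dichotomy \eqref{eq:TotikWidomRem}--\eqref{eq:TotikWidomAkh} follows, and the existence of the limit $\frac1n\log 2L_{n,\d}(x_0)=\Phi_\d(x_0)$ is immediate.
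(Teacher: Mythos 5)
Your plan captures the right skeleton — split into the Remez and Akhiezer branches via Theorem \ref{thm:AkhRemez}, bound each side with potential theory, and combine — but the Akhiezer branch has two concrete gaps that the paper's argument avoids by a sharper route.

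\textbf{Upper bound.} Bernstein--Walsh gives $|A_{n,\a,\d}(x_0)|\le e^{nG_{\a,\d}(x_0)}$, hence $\log 2A_{n,\a,\d}(x_0)\le nG_{\a,\d}(x_0)+\log 2$, and after taking the $\max$ with the Remez branch you can only conclude $\log 2L_{n,\d}(x_0)\le n\Phi_\d(x_0)+\log 2$. In the boundary regime $\Phi_\d(x_0)=G_\d(x_0)$ this is merely $O(1)$, not the asserted $o(1)$ in \eqref{eq:TotikWidomRem}: for finite $n$ the extremizer may still be an Akhiezer polynomial (with $\a_n\to\d-1$), and it is precisely that branch which needs control sharper than $\log 2$. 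The paper closes this by using the exact comb/cosine representation behind \eqref{eq:AkhiezerRep}, namely $\log 2|A_{n,\a,\d}(y)|=nG(y,\infty,\hat\Omega_n)+o(1)$ with $\hat\Omega_n=\overline\bbC\setminus\hat E_n$ and $\hat E_n=E(\a,\d)\cup I_n$ the $n$-extension, together with the domain-monotonicity chain $G(x_0,\hat\Omega_n)\le G(x_0,\Omega_n)\le G_{\a_0,\d}(x_0)$; this absorbs the spurious $\log 2$ into the $o(1)$ because the extremal polynomial equals $\pm\cosh(nG(x_0,\hat\Omega_n))$, not just something dominated by $e^{nG_{\a,\d}(x_0)}$.

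\textbf{Lower bound.} The claim that a ``Fej\'er/Faber-type'' polynomial built from the complex Green function of $E(\a,\d)$ is bounded on $E(\a,\d)$ up to a fixed constant \emph{and} attains $\gtrsim e^{nG_{\a,\d}(x_0)}$ at a point $x_0$ \emph{inside the gap} is not a softly available textbook fact. On a two-interval set, $\exp\bigl(n(G+i\tilde G)\bigr)$ is single-valued only after $n$ times the harmonic measure of a band is made integral, which forces the $n$-dependent extension $\hat E_n=E(\a,\d)\cup I_n$; the location of $I_n$ (equivalently, of the extra zero $x_n$) produces exactly the bounded error in \eqref{eq:GreenAsymptotic}. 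In other words, your ``soft'' lower estimate \emph{is} the substance of the cited asymptotics (\cite{ChriSiZiYuDuke}), and its correct form is $\log 2A_{n,\a_0,\d}(x_0)\ge nG_{\a_0,\d}(x_0)-C+o(1)$ with $C$ as in \eqref{eq:WidomConstant}, not the clean $\ge e^{nG_{\a_0,\d}(x_0)}$ written in the proposal. Finally, the worry about uniformity in $\a$ as $\a\to\d-1$ is misplaced: once Lemma \ref{lem:UpperEnvelopeGreen} produces the $n$-independent maximizer $\a_0$, the lower bound needs only the single competitor $A_{n,\a_0,\d}$ (as the paper does), and in the boundary case the single competitor is $R_{n,\d}$ itself, giving $nG_\d(x_0)+o(1)$ with no Widom constant at all.
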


\section{Preliminaries}

In \cite{TikhYud},  the sharp constant in the Remez inequality for trigonometric polynomials on the unit circle was given. The proof was based on the following two steps: 
\begin{itemize}
	\item[(i)] the structure of possible extremal polynomials was revealed with the help of their comb representations,
	\item[(ii)] the principle of harmonic measure (a monotonic dependence on a domain extension) allows 
	to get an extremal configuration for the comb parameters in the given problem.
\end{itemize}

We also start with recalling comb representations for extremal polynomials, see Subsection \ref{sec:CombRep}. We refer the reader to \cite{ErY12, SodYud92} and the references therein for more information about the use of comb mappings in Approximation theory. However, we doubt that the step (ii) is applicable to the Andievskii problem. That is, that comparably simple arguments from potential theory such as the principle of harmonic measure, would also allow  us to bring a certain fixed configuration (comb) to an extremal one. Instead, we develop here an \textit{infinitesimal approach}, closely related to the ideas of Loewner chains \cite{ConwayCompAna,PomUnivalent}.

Using this method, we will prove in Section \ref{sec:Reduction} that the extremal solution for $L_{n,\d}(x_0)$ is either a Chebyshev
or an Akhiezer polynomial. For this reason, we continue  in Subsection \ref{sec:Akhiezer} with the complete discussion of
Akhiezer polynomials $A_{n,\a,\d}$, see \eqref{28may1}. Recall  that $A_{n,\a,\d}$ is extremal on two given intervals with respect to
points $x_0 \in (\a-\d,\a+\d)$. Up to some trivial degenerations, it is different to the classical Chebyshev polynomial related to the same set, since the latter one is extremal for points $x_0\in \bbR \setminus [-1, 1]$. Generically,  $A^{-1}_{n,\a,\d}([-1, 1])$ is a union of \textit{three} intervals --  
the set  contains also an additional interval outside of $[-1, 1]$. We demonstrate our infinitesimal approach showing dependence of this additional interval on $\a$ for fixed $n$ and $\delta$. In the language of comb domains, we will observe a rather involved dependence of the comb parameters in a simple monotonic motion of $\a$. The domain can undergo all possible variations: to narrow, expand, or a combination of both, see Theorem \ref{thm:ystarH0}. Essentially, this is  the base for our believe that simple arguments, in the spirit of  (ii), in the Andrievskii problem are hardly possible.  

If for fixed $x_0$ the extremal polynomial is a Remez polynomial, there is no additional interval outside of $[-1,1]$. Intuitively, the same considerations as were used in \cite{TikhYud} should work. However, a technical difference prevents a direct applications of the principle of harmonic measure. Namely, the Lebesgue measure on the circle corresponds to the harmonic measure evaluated at $0$, while in the sense of potential theory the Lebesgue measure on $\bbR$ corresponds to the Martin or Phragm\'{e}n-Lindel\"of measure, \cite{KoosisLogInt1}. Although we are convinced that a limiting process would allow to overcome this technical issue, we provide an alternative proof below; cf Lemma \ref{lem:Remez}. 

\subsection{Comb representation for extremal polynomials}\label{sec:CombRep}
By a regular comb, we mean a half strip with a system of vertical slits
\begin{align*}
\Pi=\{z: \Re z\in (\pi n_-,\pi n_+), \ \Im z>0 \}\setminus\{z=\pi k+iy: y\in (0,h_k),  k\in (n_-,n_+)\}
\end{align*}
where $n_+-n_-=n\in\bbN$ and the $k$'s are integers. We call $h_k$, $h_k\geq 0$, the height of the $k$-th slit and point out that the degeneration $h_k=0$ is possible. Let $\t:\bbC_+\to\Pi$ be a conformal mapping of the upper half-plane onto a regular comb such that $\t(\infty)=\infty$. Then
\begin{equation}\label{28oct2}
T_n(z)=\cos \t(z)
\end{equation}
defines a polynomial of degree $n$. Let $E\subset [-1,1]$ be compact and $x_0\in[-1,1]\setminus E$. Moreover, let $(a,b)$ denote the maximal open interval in $\bbR\setminus E$ that contains $x_0$. Then using the technique of Markov correction terms we obtain the following representation of the extremizer of \eqref{eq:upperEnvSet}
\begin{theorem}[{\cite[7.5. Basic theorem]{SodYud92}, \cite[Theorem 3.2.]{EichYu18}}]\label{thm:extension}
	Under the normalization $T_n(x_0)>0$, there exists a unique extremal polynomial for \eqref{eq:upperEnvSet} and it only depends on the gap $(a,b)$ and not on the particular point $x_0\in(a,b)$. Moreover, let $\t:\bbC_+\to\Pi$ be a comb mapping onto a regular comb $\Pi$ and $E$ be such that:
	\begin{itemize}
		\item[(i)]  $h_0>0$ and $a=\t^{-1}(-0), b=\t^{-1}(+0)$,
		\item[(ii)] $E$ contains at least one of the points $\t^{-1}(k\pi\pm 0)$, for all $k\in(n_-,n_+)$,
		\item[(iii)] $E$ contains at least one of the points $\t^{-1}(\pi n_-), \t^{-1}(\pi n_+)$,
	\end{itemize} 	
	Then,  
	\begin{align}\label{eq:SpecialRepExtremalPoly}
	T_n(z)=\cos\t(z)
	\end{align}
	is an extremal polynomial for $E$ and the interval $(a,b)$. Vice versa, if $T_n$ is an extremal polynomial for a set $E$ and an interval $(a,b)$, then there exists a regular comb with these properties such that \eqref{eq:SpecialRepExtremalPoly} holds.
\end{theorem}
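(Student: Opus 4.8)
The plan is to reduce the extremal problem \eqref{eq:upperEnvSet}, for a fixed compact $E$ and a fixed $x_0$ in a gap $(a,b)$ of $\bbR\setminus E$, to a Chebyshev-type contact (alternation) problem, and then to read off the comb from the contact data.

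\emph{Existence and first reductions.} The space $\cP_n$ is finite dimensional and, when $|E|>0$, the quantity $P\mapsto\max_E|P|$ is a norm on it; hence $\{P_n\in\cP_n:\ |P_n|\le 1\ \text{on}\ E\}$ is convex, compact and symmetric, and the linear functional $P_n\mapsto P_n(x_0)$ attains its maximum at some $T_n$, which we normalize by $T_n(x_0)>0$. Since the constant polynomial $1$ is feasible, $T_n(x_0)=M_n(x_0,E)\ge 1$, so on the component $(a,b)$ of $\bbR\setminus E$ containing $x_0$ one has $T_n\ge 1$; the truly degenerate configurations (e.g.\ $(a,b)$ unbounded, or $E$ an interval) collapse to the classical Chebyshev polynomial and are handled directly.

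\emph{The contact characterization.} The heart of the matter is the equivalence: $T_n$ is extremal for $E$ and $(a,b)$ if and only if $E\subset T_n^{-1}([-1,1])$, on $(a,b)$ the graph of $T_n$ is a single hump above the level $1$, and the contact set $\{x:\ |T_n(x)|=1\}$ meets $E$ and the points $a,b$ in a configuration — counting double contacts in the interior of the components of $E$ and simple contacts at $a$, $b$ and at the outermost nodes — that already forces at least $n+1$ zeros (with multiplicities) of the difference $T_n-Q$ for every feasible $Q\in\cP_n$. Necessity is the perturbation argument: a deficient contact count leaves room for a correction $q\in\cP_n$ vanishing to the appropriate order at all contact points with $q(x_0)>0$, and then $T_n+\varepsilon q$ is still feasible on $E$ for small $\varepsilon>0$ (use compactness of $E$ away from the contact set and the double-contact structure near it) while $T_n(x_0)+\varepsilon q(x_0)>T_n(x_0)$, contradicting extremality — this is the classical Markov correction-term technique. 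Sufficiency is a comparison: if $Q(x_0)>T_n(x_0)$, then reading off the signs of $T_n-Q$ at the contact nodes (nonnegative where $T_n=1$, nonpositive where $T_n=-1$) together with its negativity at $x_0$ shows $T_n-Q$ has more than $n$ zeros, so $T_n-Q\equiv 0$, a contradiction. Because this description refers only to $E$ and to the endpoints $a,b$ and never to the interior point, it holds verbatim for every $x_0'\in(a,b)$; hence $T_n$ is unique and is independent of the chosen point of the gap.

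\emph{Passage to the comb and the converse.} From the contact structure the function $\t:=\arccos T_n$ — with the branch chosen so that, as $z$ runs through $(a,b)$, $\t$ climbs and descends the segment $\{iy:\ 0<y<h_0\}$, where $\cosh h_0=\max_{[a,b]}T_n>1$ — is a conformal map of $\bbC_+$ onto a regular comb $\Pi$: the components of $E$ become teeth $[\pi k,\pi(k+1)]$ of the base, each interior gap of $E$ a vertical slit over a multiple of $\pi$, the gap $(a,b)$ the slit over $0$, and $\deg T_n=n$ fixes the width $\pi n$. Conditions (i)--(iii) are then exactly the translations: $a,b$ sit at the foot of the slit over $0$; each tooth is pinned by a point of $E$ (no gap swallows a whole tooth); the two lateral ends of the strip are anchored on $E$. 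Conversely, given a regular comb with (i)--(iii), Schwarz reflection of $\t$ across the teeth and across the lateral sides extends $\cos\t$ to an entire function, and the growth $\cos\t(z)\sim c\,z^n$ at $\infty$ (equivalently, counting preimages of a regular value) shows $\cos\t\in\cP_n$ of exact degree $n$; it is feasible by construction and realizes the contact pattern, so by the sufficiency half it is the extremal polynomial for $E$ and $(a,b)$.

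\emph{Main obstacle.} I expect the genuine work to lie in the contact bookkeeping: correctly tracking multiplicities when a slit degenerates, $h_k=0$, so that a tooth collapses to a point and two components of $E$ fuse, and at the lateral boundary of the strip — exactly the content of conditions (ii) and (iii) — together with verifying that the extremal $T_n$ really is a single hump on $(a,b)$ for an arbitrary interior $x_0$, rather than oscillating across $\pm 1$ inside the gap. Making the Markov correction rigorous in those boundary and degenerate situations, where the naive choice of a perturbing polynomial fails and it must be adapted to the double zeros of $T_n^2-1$ on $E$, is the delicate point.
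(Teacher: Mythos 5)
The paper does not prove this theorem: it is quoted with citations to \cite{SodYud92} and \cite{EichYu18}, so there is no in-paper argument to compare against. Your outline---compactness for existence, the Markov correction-term characterization of the contact set, passage to the comb via $\arccos T_n$, and the converse by Schwarz reflection and a degree count---is precisely the route of the cited sources, so in that sense it is the same approach, and as a sketch it is sound.

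Two cautions. First, the sentence ``so on the component $(a,b)$ of $\bbR\setminus E$ containing $x_0$ one has $T_n\ge 1$'' reads as though it followed from $T_n(x_0)\ge 1$; it does not, and is itself part of the single-hump property you later flag as the main obstacle, so it should not be asserted up front. Second, both uniqueness and the independence from the choice of $x_0\in(a,b)$ rest entirely on the multiplicity bookkeeping in the contact set (double contacts interior to the bands, simple contacts at $a$, $b$ and at the lateral anchors encoded by (iii), and the degenerate fusings $h_k=0$), together with a count showing the corrected polynomial stays feasible on all of $E$, not just near the contacts. You correctly identify this as the hard part but do not carry it out; what you have is therefore a faithful outline of the proof in \cite{SodYud92} and \cite{EichYu18} rather than a self-contained argument.
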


Let us now specify to the case of Akhiezer polynomials, where $E=E(\a,\d)=[-1,1]\setminus (\a-\d,\a+\d)$. By the above theorem, the so-called $n$-extension $E_n=E_n(\a,\d):=A_{n,\a,\d}^{-1}([-1,1])$ can be of the following types:
\begin{enumerate}[label=(\roman*)]
	\item there is an additional interval to the right of $1$,
	\item $E$ is extended at $1$,
	\item $E$ is extended at $-1$,
	\item there is an additional interval to the left of $-1$.
\end{enumerate} 
The corresponding comb-mapping realization is collected in the following Corollary:
\begin{corollary}\label{cor:AkhPoly}
	For a fixed set $E=E(\a,\d)$ and  degree $n$ the extremal polynomial $A_{n,\a,\d}(z)$ of \eqref{eq:upperEnvSet} is of the form \eqref{eq:SpecialRepExtremalPoly}. The unique comb $\Pi_{n,\a,\d}$ has one of four possible shapes shown in Figure \ref{fig:cases1and2} and \ref{fig:cases3and4}. Moreover, the following normalizations distinguish the cases 
	\begin{enumerate}[label=(\roman*)]
		\item\label{it:Combcase1} $\t^{-1}(\pi n_-)=-1$, $\t^{-1}(\pi (n_+-1)-0)=1$, see Figure \ref{fig:cases1and2} left,
		\item\label{it:Combcase2} $\t^{-1}(\pi n_--0)=-1$, $\t(1)\in [\pi (n_+-1), \pi n_+]$ see Figure \ref{fig:cases1and2} right,
		\item\label{it:Combcase3} $\t(-1)\in [\pi (n_-), \pi (n_-+1)]$, $\t^{-1}(\pi n_--0)=-1$, see Figure \ref{fig:cases3and4} left,
		\item\label{it:Combcase4} $\t^{-1}(\pi (n_-+1)+0)=-1$, $\t^{-1}(\pi n_+)=1$, see Figure \ref{fig:cases3and4} right.
	\end{enumerate}
\end{corollary}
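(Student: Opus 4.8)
The plan is to read the four-case picture directly off Theorem \ref{thm:extension}, using only the very special shape of $E=E(\a,\d)$. First observe that, by Definition \ref{def:AkhiezerPol}, the point $x_0$ lies in $(\a-\d,\a+\d)$, and since $E(\a,\d)=[-1,1]\setminus(\a-\d,\a+\d)$ the maximal open interval $(a,b)$ of $\bbR\setminus E(\a,\d)$ containing $x_0$ is exactly $(\a-\d,\a+\d)$. Feeding this gap into Theorem \ref{thm:extension} (under the normalization $A_{n,\a,\d}(x_0)>0$) yields a \emph{unique} extremizer of \eqref{eq:upperEnvSet}, independent of the chosen $x_0\in(\a-\d,\a+\d)$, of the form $A_{n,\a,\d}=\cos\t$ with $\t\colon\bbC_+\to\Pi_{n,\a,\d}$ a comb mapping onto a regular comb of width $n\pi$; translating so that the distinguished slit sits over the origin we get $h_0>0$, $\t^{-1}(\mp0)=a,b$, and properties (ii),(iii) of Theorem \ref{thm:extension}. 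Uniqueness of the comb is the ``vice versa'' part of that theorem, so it only remains to show that precisely one of the four shapes can occur.

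The dictionary I would use is: the bands of $E_n:=A_{n,\a,\d}^{-1}([-1,1])$ are the $\t$-images of the teeth of $\Pi_{n,\a,\d}$, a slit of positive height at integer position $k$ corresponds to a gap of $E_n$ with endpoints $\t^{-1}(\pi k\mp 0)$, the outermost points of $E_n$ are $\t^{-1}(\pi n_\mp)$, and the slit over $0$ corresponds to the internal gap $(\a-\d,\a+\d)$. Because $E=[-1,a]\cup[b,1]\subseteq E_n$ and $E$ has no interior gap, no gap of $E_n$ other than $(a,b)$ can have an endpoint in $(-1,a)\cup\{a,b\}\cup(b,1)$; hence every such gap has an endpoint in $\{-1,1\}$, and the same argument shows $-1$ (resp.\ $1$) can only occur as the \emph{right} (resp.\ \emph{left}) endpoint of a gap. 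Thus there is at most one gap of the form $(e,-1)$ and at most one of the form $(1,c)$; if both were present then both $\t^{-1}(\pi n_\mp)$ would lie outside $E$, contradicting property (iii). So at most one additional band appears, and on one side only.

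Finally, property (ii) locates $\pm1$ in the comb. If an additional band sits to the right of $1$, any integer position interior to it would be a degenerate slit whose $\t^{-1}$-image lies in $(1,\infty)$, hence outside $E$ --- impossible by (ii); so that band is a single tooth, the preceding gap is the slit at $n_+-1$, $\t^{-1}(\pi(n_+-1)-0)=1$, and by (iii) $\t^{-1}(\pi n_-)=-1$: this is case \ref{it:Combcase1}, and the mirror argument gives case \ref{it:Combcase4}. If there is no additional band but $E_n\supsetneq E$, then by (iii) the extension is one-sided; on the right it forces $\t^{-1}(\pi n_--0)=-1$, and (ii) applied to the integer positions in the band over $[b,1]$ forces $\t(1)\in[\pi(n_+-1),\pi n_+]$ (the endpoint value $\t(1)=\pi n_+$ being the degenerate situation $E_n=E$) --- this is case \ref{it:Combcase2}, with case \ref{it:Combcase3} its mirror image. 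The four combs so obtained are exactly those of Figures \ref{fig:cases1and2} and \ref{fig:cases3and4}. No new analytic ingredient beyond Theorem \ref{thm:extension} enters; the only real work --- and the step most likely to hide a slip --- is this bookkeeping: systematically using (ii) and (iii) to pin $\pm1$ inside the comb and to verify that no further configuration survives, so that the list (i)--(iv) is genuinely exhaustive and matches the pictures.
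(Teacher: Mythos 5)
Your proposal is correct and takes the same route the paper intends: everything is read off Theorem \ref{thm:extension} applied to the one-gap set $E(\a,\d)$. The paper in fact states the Corollary without a formal proof (relying on the informal enumeration of possible $n$-extensions just before it), and your write-up supplies exactly the bookkeeping that is left implicit there --- using properties (ii) and (iii) of Theorem \ref{thm:extension} to show there is at most one extra band, on one side only, and to pin down the endpoint normalizations in each of the four cases.
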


\begin{remark}
Let us denote the additional interval for the cases \ref{it:Combcase1} and \ref{it:Combcase4} by $I_n$. These cases include the limit cases $h_{n_+-1}=\infty$ and $h_{n_-+1}=\infty$, respectively.  That is, the extremal polynomial is of the degree $n-1$. Note that $A_{n,\a,\d}$ has a zero on $I_n$. The aforementioned degeneration corresponds to ``the zero being at $\infty$''. On the other hand, also the degenerations $h_{n_+-1}=0$ and $h_{n_-+1}=0$ are possible, which allow for a  smooth transition to the  cases \ref{it:Combcase2} and \ref{it:Combcase3}, respectively.
\end{remark}

\subsection{Reduction to Remez polynomials}\label{sec:Remez}
As we have mentioned in the beginning of this Section, we cannot use the ideas of harmonic measure directly. We overcome this technical problem by using transition functions from the theory of Loewner chains \cite{ConwayCompAna,PomUnivalent}. Let us consider an arbitrary regular comb  $\Pi=\Pi(0)$ as in Theorem \ref{thm:extension} and let us fix a slit with height $h_k>0$. Let $h_k(\e)$ be strictly monotonically decreasing such that $h_k(0)=h_k$ and $\Pi(\e)$ be the comb which coincides with $\Pi$, but $h_k$ is reduced to $h_k(\e)$. Let $\t$ and $\t_\e$ be the corresponding comb mappings. Then, clearly $\Pi\subset\Pi_{\e}$ and thus the transition function 
\begin{align*}
w(z,\e)=w_\e(z):=\t^{-1}_\e(\t(z))
\end{align*}
is well defined and is an analytic map from $\bbC_+$ into $\bbC_+$. If we define $I(\e):=\t^{-1}([\pi k+ih_k(\e), \pi k+ih_k])$ and $c_k(\e)=\t_\e^{-1}(h_k(\e))$, then $w_\e:\bbR\setminus I(\e)\to \bbR\setminus\{c_k(\e)\}$ is one-to-one and onto. The arc $J(\e):=\t_\e^{-1}([\pi k+ih_k(\e), \pi k+ih_k])$ lies, except its endpoint in $\bbC_+$ and $w_\e:\bbC_+\to\bbC_+\setminus J(\e)$ is conformal.
\begin{lemma}\label{lem:decreasingVariation}
	The Nevanlinna function $w_\e$ admits the representation 
	\begin{equation}\label{28oct4}
	w(z,\e)=z+\int_{I(\e)}\frac{1-z^2}{1-\xi^2}\frac{\dd\sigma_\e(\xi)}{\xi-z}.
	\end{equation}
\end{lemma}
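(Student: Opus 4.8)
The plan is to check that $w_\e$ is a Nevanlinna function which is real and analytic on $\bbR\setminus\overline{I(\e)}$ and fixes the three points $\infty$, $1$, $-1$, and then to observe that any function with these properties is forced into the shape \eqref{28oct4} (with $\sigma_\e\ge 0$). First I would record the analytic structure of $w_\e$. Since $\partial\Pi$ and $\partial\Pi_\e$ are finite unions of analytic arcs, hence locally connected, Carath\'eodory's theorem extends $\t$ and $\t_\e^{-1}$ continuously to the closures, so $w_\e=\t_\e^{-1}\circ\t$ is continuous on $\overline{\bbC_+}$; on $\bbR\setminus I(\e)$ it is real-valued (it maps this set onto $\bbR\setminus\{c_k(\e)\}$), hence by the Schwarz reflection principle $w_\e$ continues analytically across each component of $\bbR\setminus\overline{I(\e)}$, and, because $w_\e(z)\to\infty$, also across $\infty$ with $w_\e(\infty)=\infty$; being a Nevanlinna function it then has an expansion $w_\e(z)=b_\e z+a_\e+O(1/z)$ at $\infty$ with $b_\e>0$. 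Because the slit being shortened sits at an interior node $\pi k$, $k\in(n_-,n_+)$, the set $\overline{I(\e)}$ lies strictly between $\t^{-1}(\pi n_-)=-1$ and $\t^{-1}(\pi n_+)=1$; and since shortening this slit changes neither $n_\pm$ nor the normalization of the comb map along the two vertical rays of the strip, $w_\e(\pm1)=\t_\e^{-1}(\t(\pm1))=\t_\e^{-1}(\pi n_\pm)=\pm1$. This last identity is the only place where the geometry of the configuration genuinely enters, and I would establish it first.

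Granting this, I would set $h(z):=\dfrac{w_\e(z)-z}{1-z^2}$. As the numerator vanishes at $z=\pm1$, the poles of $h$ there are removable, so $h$ is analytic on $\overline\bbC\setminus\overline{I(\e)}$, real on $\bbR\setminus\overline{I(\e)}$, and $h(z)=O(1/z)$ at $\infty$ (in fact $zh(z)\to b_\e-1$). Moreover, for $\xi$ interior to $I(\e)$ one has $w_\e(\xi+i0)=w_\e(\xi)\in J(\e)\subset\bbC_+$, so $\Im w_\e(\xi+i0)\ge0$, and since $1-\xi^2>0$ on $\overline{I(\e)}$ this gives $\Im h(\xi+i0)\ge0$. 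A function analytic off a compact real set $\overline{I(\e)}$, real off it and $O(1/z)$ at $\infty$ is the Cauchy transform of its jump: contracting a contour about $\overline{I(\e)}$ in Cauchy's formula (and using $h(\infty)=0$) yields $h(z)=\int_{I(\e)}\frac{\dd\nu_\e(\xi)}{\xi-z}$ with $\dd\nu_\e(\xi)=\tfrac1\pi\Im h(\xi+i0)\,\dd\xi\ge0$ a finite, absolutely continuous measure. Multiplying back by $1-z^2$ and setting $\dd\sigma_\e(\xi):=(1-\xi^2)\,\dd\nu_\e(\xi)\ge0$ produces exactly \eqref{28oct4}, and at the same time the positivity of $\sigma_\e$. (Equivalently, one may write the Herglotz representation $w_\e(z)=b_\e z+a_\e+\int\bigl(\tfrac1{\xi-z}-\tfrac{\xi}{1+\xi^2}\bigr)\dd\mu_\e(\xi)$ with $\mathrm{supp}\,\mu_\e\subseteq\overline{I(\e)}$, solve the two linear relations $w_\e(\pm1)=\pm1$ for $a_\e$ and $b_\e$ in terms of $\mu_\e$, and simplify.)

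The soft ingredients here are the continuous extension of the comb maps, the Schwarz reflection, and the identification of $h$ with a Cauchy transform; the points requiring care are (a) the identity $w_\e(\pm1)=\pm1$, which records a property of the normalization rather than a formal fact, and (b) the behaviour of $w_\e$ near the endpoints $a_\e'$, $b_\e'$ of $I(\e)$, where $w_\e(z)-c_k(\e)$ has square-root growth; one must check that this is mild enough for the contour contraction to be legitimate and for $\nu_\e$ to carry no mass at $a_\e'$, $b_\e'$, which holds because $a_\e',b_\e'\in(-1,1)$ keeps $h$ bounded there. I expect (a) to be the main obstacle.
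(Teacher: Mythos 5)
Your proof is correct and rests on exactly the same structural facts the paper uses: $w_\e$ is Nevanlinna, real-analytic off $\overline{I(\e)}$ so that its Herglotz measure is supported there, and normalized by $w_\e(\pm1)=\pm1$, $w_\e(\infty)=\infty$. The only real difference is the final bookkeeping: the paper writes the general Herglotz form $w_\e(z)=A_\e z+B_\e+\int\frac{\dd\sigma_\e(\xi)}{\xi-z}$ and eliminates $A_\e,B_\e$ via the two interpolation conditions at $\pm1$, whereas you form $h(z)=(w_\e(z)-z)/(1-z^2)$ directly and recognize it as the Cauchy transform of a positive measure; you even note the paper's route as the ``equivalent'' parenthetical alternative. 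Your preliminary paragraph (Carath\'eodory extension, Schwarz reflection across $\bbR\setminus\overline{I(\e)}$) spells out what the paper dispatches with ``it follows from the above that $w_\e$ is a Nevanlinna function and that the measure in its integral representation is supported on $I(\e)$,'' so you are being more explicit rather than doing something different. Your instinct that $w_\e(\pm1)=\pm1$ is the one place where the geometry genuinely enters is right; the paper simply records it as \eqref{eq:transitionNormalization}, which is justified because the slit being shortened sits at an interior node $\pi k$, $k\in(n_-,n_+)$, so the preimages of the corner points $\pi n_\pm$ (which are fixed to be $\pm1$ in the normalization of both combs) do not move.
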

\begin{proof}
	It follows from the above, that $w_\e$ is a Nevanlinna function and that the measure in its integral representation is supported on $I(\e)$. That is, we can write 
	\begin{align*}
	w_\e(z)=A_\e z+B_\e+\int_{I(\e)}\frac{\dd\sigma_\e(\xi)}{\xi-z},\quad A_\e\geq 0,B_\e\in\bbR.
	\end{align*}
	Moreover, it satisfies the normalization conditions
	\begin{align}\label{eq:transitionNormalization}
	w_\e(-1)=-1,\quad w_\e(1)=1,\quad w_\e(\infty)=\infty,
	\end{align}
	which yields
	\begin{align}\label{eq:Aexplicit}
	B_\e=1-A_\e-\int\frac{\dd\sigma_\e(\xi)}{\xi-1},\quad A_\e=1+\int_{I(\e)}\frac{\dd\sigma_\e(\xi)}{1-\xi^2}.
	\end{align}
	From this, we obtain \eqref{28oct4}. 
\end{proof}

We are now ready to prove the easy part in Theorem \ref{thm:AkhRemez}. That is, if $x_0$ is in a gap on the boundary, then the extremal polynomial is actually the Remez polynomial. 
\begin{lemma}\label{lem:Remez}
	Let 
	\begin{align*}
	E=[b_0,1]\setminus \bigcup_{j=1}^g(a_j,b_j),\quad x_0\in[-1,b_0),\quad |E|=2-2\d
	\end{align*}
	and $T_n(x,E)$ be the extremal polynomial for $x_0$ as described in Theorem \ref{thm:extension}. Then, 
	\begin{align*}
	R_{n,\d}(x_0)> T_n(x_0,E).
	\end{align*}
\end{lemma}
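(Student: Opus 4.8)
\emph{Proof proposal.} The plan is to transform the comb of $T_n(\cdot,E)$ towards the flat half--strip of the Remez polynomial by erasing, one at a time, the slits sitting over the bounded gaps of the $n$--extension, to show the value at $x_0$ does not decrease along this deformation, and to conclude by comparing the terminal level set with $E(\d)$. Write $T_n:=T_n(\cdot,E)=\cos\t(z)$ with comb $\Pi$ and $E_n:=T_n^{-1}([-1,1])\supseteq E$. From $|E|=(1-b_0)-\sum_j(b_j-a_j)=2-2\d$ one gets $b_0\le-1+2\d$, with equality exactly when $g=0$, i.e.\ when $E=E(\d)$; since the statement excludes that case, assume $g\ge1$, so $b_0<-1+2\d$. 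We may also assume $x_0\notin E_n$, since otherwise $T_n(x_0,E)=1<\mathfrak{T}_n\!\big(\tfrac{\d-x_0}{1-\d}\big)=R_{n,\d}(x_0)$. The gap of $E$ containing $x_0$ is the unbounded one $(-\infty,b_0)$, so $x_0$ lies on one of the two vertical boundary lines of $\Pi$; in particular $x_0<\beta_-:=\min E_n$, the polynomial $T_n$ has no critical point on $(-\infty,\beta_-)$, and by the normalisation $T_n(x_0)>0$ it increases strictly as $z\to-\infty$ there, with $T_n(x_0)>1$. Finally, since $E\subseteq E_n$, every bounded gap of $E_n$ lies inside some $(a_j,b_j)\subseteq(b_0,1)\subseteq(-1,1)$, hence strictly inside $(-1,1)$ and to the right of $x_0$.

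For the one--step estimate, fix a slit $h_k>0$ of $\Pi$ over a bounded gap $(p_k,q_k)\subseteq(-1,1)$ of $E_n$ with $p_k>b_0>x_0$, and reduce it to $h_k(\e)<h_k$ as in Lemma \ref{lem:decreasingVariation}; with $T_{n,\e}:=\cos\t_\e$, $I(\e)\subseteq(p_k,q_k)$ and $w_\e:=\t_\e^{-1}\circ\t$ we have $T_{n,\e}\circ w_\e=T_n$. As $x_0\notin I(\e)$ and $w_\e$ is increasing on the component $(-\infty,\inf I(\e))\ni x_0$ of $\bbR\setminus I(\e)$, there is a unique real $z_\e:=w_\e^{-1}(x_0)<\inf I(\e)$, and $T_{n,\e}(x_0)=T_n(z_\e)$. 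By \eqref{28oct4},
$$
w_\e(x_0)-x_0=\int_{I(\e)}\frac{1-x_0^2}{1-\xi^2}\,\frac{\dd\s_\e(\xi)}{\xi-x_0}\ \ge\ 0 ,
$$
because $\s_\e\ge0$, $\xi\in(-1,1)$, $\xi>p_k>x_0$ and $1-x_0^2\ge0$; hence $z_\e\le x_0<\beta_-$, and since $T_n$ increases towards $-\infty$ on $(-\infty,\beta_-)$ we get $T_{n,\e}(x_0)=T_n(z_\e)\ge T_n(x_0)$. Moreover $\t_\e(x_0)=\t(z_\e)$ again lies on a vertical boundary line, so $x_0<\min E_{n,\e}$ and $T_{n,\e}(x_0)>1$, and the elementary identity $\tfrac{\dd}{\dd z}\big(w_\e(z)-z\big)>0$ off $I(\e)$ shows both that the bounded gaps of $E_{n,\e}$ inside $(-1,1)$ are images of those of $E_n$ under an increasing map fixing $\pm1$, hence again lie in $(-1,1)$, and that $w_\e$ strictly expands every bounded interval of $E_n$, so $|E_{n,\e}\cap(-1,1)|\ge|E_n\cap(-1,1)|$, strictly for a nontrivial reduction.

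Iterating, erase one after another all the (finitely many) slits over bounded gaps of $E_n$ inside $(-1,1)$, obtaining $\widehat T\in\cP_n$ with $\widehat T(x_0)\ge T_n(x_0,E)$ and level set $\widehat E:=\widehat T^{-1}([-1,1])$ having no bounded gap in $(-1,1)$, still containing $1$, and with $x_0<\min\widehat E$. Then $\widehat E\cap(-1,1)=[\min\widehat E,1)$ and
$$
1-\min\widehat E=\bigl|\widehat E\cap(-1,1)\bigr|\ \ge\ \bigl|E_n\cap(-1,1)\bigr|\ \ge\ |E|=2-2\d ;
$$
with $g\ge1$ this is strict — by the expansion if some slit was erased, and otherwise because $E_n\cap(-1,1)$ is then already an interval of length $\ge2-2\d$ strictly larger than $E\cap(-1,1)$. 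Hence $\min\widehat E<-1+2\d$, so $\widehat E\supsetneq E(\d)=[-1+2\d,1]$ and $|R_{n,\d}|>1$ on the non-empty set $[\min\widehat E,-1+2\d)\subseteq\widehat E$; thus $R_{n,\d}$ is not admissible for $\widehat E$. Since the extremal for $E(\d)$ is the unique polynomial $R_{n,\d}$, the value $M_n(x_0,\widehat E)$ — attained by some polynomial admissible for $\widehat E\supseteq E(\d)$, hence not by $R_{n,\d}$ — is strictly smaller than $M_n(x_0,E(\d))$. Therefore
$$
T_n(x_0,E)\ \le\ \widehat T(x_0)\ \le\ M_n(x_0,\widehat E)\ <\ M_n(x_0,E(\d))\ =\ R_{n,\d}(x_0).
$$

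The step that really must be pinned down — and the main obstacle — is the assertion that $x_0$ stays in the unbounded component of $\overline\bbC\setminus E_{n,\e}$ throughout, equivalently on a vertical boundary line of the comb: the whole sign argument rests on $T_n$ being monotone on $(-\infty,\beta_-)$. For the initial extremal $T_n(\cdot,E)$ this is the boundary-gap analogue of the description of $E_n$ for Akhiezer polynomials in Corollary \ref{cor:AkhPoly} and has to be extracted from Theorem \ref{thm:extension}, whose statement is literally for bounded gaps and so needs the evident adaptation to the unbounded one; once it is in place, the sign computations of the one-step estimate preserve it along the deformation, and the only other care needed is the bookkeeping that the expansion bound on $|\,\cdot\cap(-1,1)|$ survives the compositions of transition functions.
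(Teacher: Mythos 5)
Your proof is correct and follows essentially the same route as the paper's: contract the slits of the extremal comb one at a time via the transition functions of Lemma~\ref{lem:decreasingVariation}, check that both the value at $x_0$ and the relevant Lebesgue measure grow along the deformation, and compare the terminal single‑gap configuration with the Chebyshev polynomial of one interval. The only noticeable differences are bookkeeping: you track $|E_n\cap(-1,1)|$ where the paper tracks $|w_\e(E)|$, you phrase the monotonicity at $x_0$ through $T_n$ on $(-\infty,\min E_n)$ rather than through $\Im\t_\e$ on the vertical boundary of the comb (the same fact in the two pictures), and you close via uniqueness of the Remez extremizer on $E(\d)\subsetneq\widehat E$ where the paper closes via monotonicity of $R_{n,\d}(x_0)$ in $\d$.
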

\begin{proof}
	Let $\Pi$ be the comb associated to $E$ and let us assume that $n_-=0, n_+=n$. Noting that by Theorem \ref{thm:extension} there is no extension in the extremal gap, we obtain that $\t(x_0)\in i\bbR_+$. We will show that by decreasing the slits, the set as well as the value of the extremal polynomial will increase. Let us decrease the slit corresponding to the gap $(a_j,b_j)$. By Lemma \ref{lem:decreasingVariation} this is achieved by 
	\begin{align*}
	w_\e(z)=z+\int\frac{z^2-1}{\xi^ 2-1}\frac{\dd\sigma_\e(\xi)}{\xi-z}.
	\end{align*}
	Thus, a direct computation shows
	\begin{align*}
	w_\e(a_{l+1})-w_\e(b_{l})&=a_{l+1}-b_l+(a_{l+1}-b_l)\int \left(\frac{1}{1-\xi^2}+\frac{1}{(\xi-a_{l+1})(\xi-b_l)}\right)d\sigma_\e(\xi)
	\\&>a_{l+1}-b_l,
	\end{align*}
	where we used that the support of $\dd\sigma_\e$ is included in the gap $(a_j,b_j)$. This also holds for $l=g$, with $a_{g+1}=1$. Thus, the size of the bands $[b_l,a_{l+1}]$ increases and we get that $|E_\e|$ is increasing. Similarly, we see that $w_\e(x_0)>x_0$. 
	Recall that $w_\e(z)=\t_\e^{-1}(\t(z))$. Using that $\t_\e$ is decreasing on $i\bbR_+$, we get
	\begin{align*}
	\Im \t_\e(x_0)> \Im \t_\e(w_\e(x_0))=\Im\t(x_0).
	\end{align*}
	Thus, $T_\e(x):=\cos(\t_\e(x))$ satisfies $T_\e(x_0)>T(x_0)$. Therefore, by this procedure, we can show that there is $b_0'$ with $1-b_0'>|E|$ and $T_n(x_0,[b_0',1])>T_n(x_0,E)$. Set $2-2\d':=1- b_0'$ and note that $\d'<\d$. Since $R_{n,\d}(x_0)$ is clearly monotonic increasing in $\d$ it follows that $T_n(x_0,[b'_0,1])=R_{n,\d'}(x_0)\leq R_{n,\d}(x_0)$, which concludes the proof. 
\end{proof}
\begin{remark}
	Continuing the heuristics provided in the beginning of this section, we give an interpretation of this proof in terms of the harmonic measure. The argument in the proof relies on showing that $|E_\e|>|E|$ and $w_\e(x_0)>x_0$. Let us assume we were interested in the harmonic measure of $E$ instead of its Lebesgue measure. Then the above properties have a probabilistic interpretation: Namely, the first one corresponds to the probability that a particle which starts at a point which is close to infinity in the domain $\Pi_\e$ and $\Pi$ and terminates on the base of the comb. From this perspective it is clear that this increases if one of the slit heights is decreased. Similarly, writing the second one as $w_\e(x_0)+1>x_0+1$, it corresponds to the probability that a particle terminates in $[\t(-1),\t(x_0)]$ in $\Pi$ and $\Pi_\e$, respectively. Again, this explains, why the value should be increased.
\end{remark}
The above interpretation uses the fact that there is no additional portion of $E$ outside of $[-1,1]$. For Akhiezer polynomials this is not true. This makes the problem essentially more delicate and we extend our tools by using the infinitesimal variation $\dot w$ as defined in \eqref{def:dotW}; cf. also Remark \ref{rem:slitheights}.

\subsection{Transformation of the Akhiezer polynomials as $\a$ is moving}\label{sec:Akhiezer}
Our goal is to describe the transformation of the comb as the interval starting from the center moves continuously to the left. This should correspond to a continuous transformation of the comb, which seems at the first sight impossible, since the base of the slits are positioned only at integers. We show in the following how the cases \ref{it:Combcase1}, \ref{it:Combcase2}, \ref{it:Combcase3}, \ref{it:Combcase4} can be connected by a continuous transformation: Let us start with case \ref{it:Combcase1} in the degenerated configuration $h_{n_+-1}=\infty$. Then we start decreasing $h_{n_+-1}$ until we reach $h_{n_+-1}=0$. Now we are in the situation that $\t(1)=\pi(n_+-1)$ and we continue with case \ref{it:Combcase2}. That is, $\t(1)$ increases until it reaches $\t(1)=\pi n_+$. All the time $\t(-1)=\pi n_-$. Now we continue with case \ref{it:Combcase3} and increase $\t(-1)$ until the point $\t(-1)=\pi (n_-+1)$. We continue with case \ref{it:Combcase4} and increase $h_{n_-+1}$ from $h_{n_-+1}=0$ to $h_{n_-+1}=\infty$. We have arrived at our initial configuration but the base of the comb was shifted from position
$(\pi n_-,\pi(n_+-1))$ to position $(\pi (n_-+1),\pi n_+)$.

\begin{figure}[h]
	\includegraphics[scale=0.3]{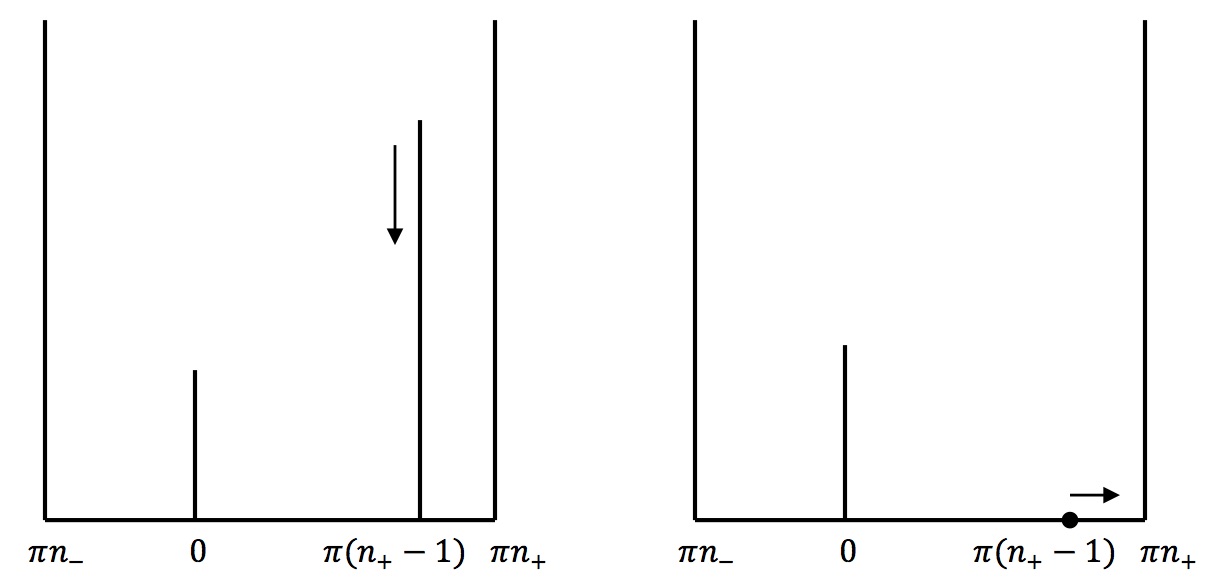}
	\caption{Comb domains for the cases (i) and (ii).}
	\label{fig:cases1and2}
\end{figure}
\begin{figure}[h]
	\includegraphics[scale=0.3]{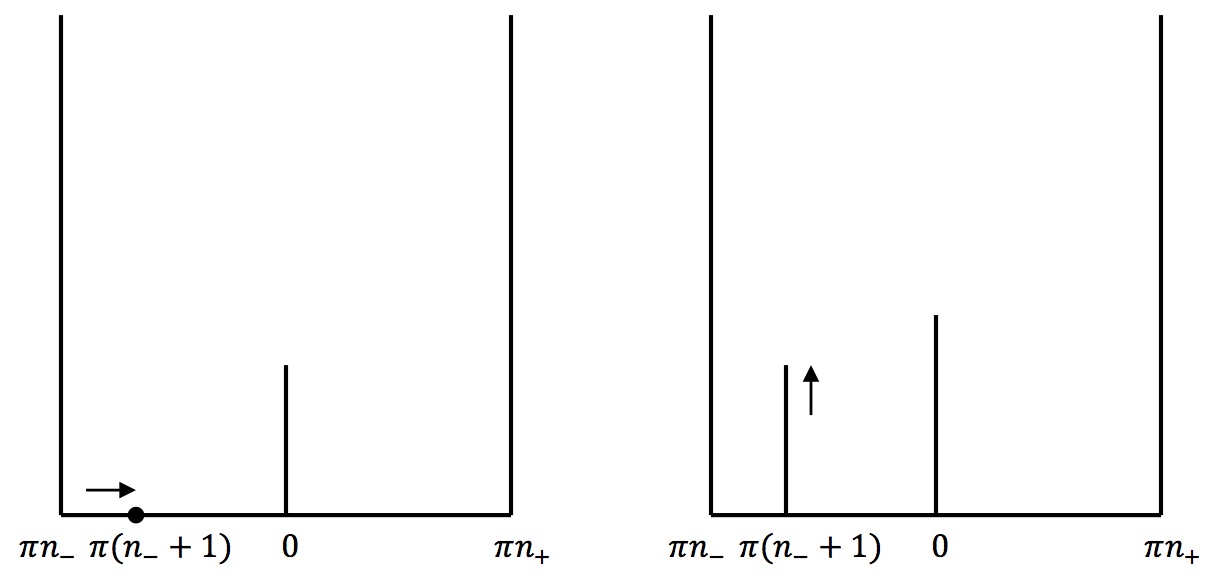}
	\caption{Comb domains for the cases (iii) and (iv).}.
	\label{fig:cases3and4}
\end{figure}

We believe it is helpful to understand these transformation also on the level of $E_n$. Our initial configuration corresponds to $E=E_n$. We view this as the additional interval is hidden at $\infty$. Starting case \ref{it:Combcase1} corresponds to the motion that the position of the addition interval decreases from $+\infty$ until it joins $E$. This corresponds to the change from case \ref{it:Combcase1} to \ref{it:Combcase2}. When it is fully absorbed, we arrive at case \ref{it:Combcase3}, i.e., $E$ starts to be extended to the left until the extension separates and start to decrease to $-\infty$. And the circle starts from the beginning. 
\begin{theorem}\label{thm:transformation}
	Moving $(a,b)$ to the left corresponds to a succession of continuous transformations of the comb as described above. If $n$ is odd, we start with case \ref{it:Combcase1} and $h_{n_+-1}=\infty$. If $n$ is even, we start with case \ref{it:Combcase3} and $\t(-1)=\pi n_-$. 
\end{theorem}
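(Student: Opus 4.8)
The plan is to combine three ingredients: continuity of the extremal polynomial, hence of the comb, in $\a$; an explicit identification of the configuration at the endpoint $\a=0$, where the symmetry of $E(0,\d)$ does all the work; and a soft argument that, since the comb is at every moment one of the four types of Corollary~\ref{cor:AkhPoly} and these types are glued cyclically along their degenerations, the comb must run through the cyclic order \ref{it:Combcase1}, \ref{it:Combcase2}, \ref{it:Combcase3}, \ref{it:Combcase4} (and back) as $(a,b)$ moves left. For the first ingredient, $\a\mapsto A_{n,\a,\d}$ is continuous on $(\d-1,0]$: the $A_{n,\a',\d}$ are locally bounded, any locally uniform limit as $\a'\to\a$ is admissible for $E(\a,\d)$ and attains the extremal value, and uniqueness in Theorem~\ref{thm:extension} forces it to be $A_{n,\a,\d}$; this transfers to continuity of the comb mapping $\t_\a$ and of all comb data (slit heights, the real points $\t_\a(\pm1)$, the positions of the edges of the strip). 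For orientation: as $\a\to(\d-1)^+$ one has $E(\a,\d)\to E(\d)=[-1+2\d,1]$, so by continuity and \eqref{eq:RemezRep}--\eqref{eq:GreenBoundary} the comb degenerates to the slitless half-strip of $R_{n,\d}$.

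For $\a=0$: the set $E(0,\d)$ is symmetric, and since the extremal polynomial depends only on the gap $(-\d,\d)$ and not on the point $x_0\in(-\d,\d)$ (Theorem~\ref{thm:extension}), I take $x_0=0$; then $x\mapsto A_{n,0,\d}(-x)$ is extremal with the same normalization, so by uniqueness $A_{n,0,\d}$ is even. If $n=2m$ it has degree exactly $2m$ and equals, up to sign, the even Akhiezer polynomial $\mathfrak{T}_m\!\left(\tfrac{1+\d^2-2x^2}{1-\d^2}\right)$ of the introduction, whose preimage of $[-1,1]$ is exactly $E(0,\d)$; so there is no additional interval and no extension, which in the normalizations of Corollary~\ref{cor:AkhPoly} is case~\ref{it:Combcase3} at its boundary value $\t(-1)=\pi n_-$ (equivalently case~\ref{it:Combcase2} at $\t(1)=\pi n_+$). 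If $n=2m+1$, being even it has degree $\le2m$, so the degree-$n$ problem has the same solution as the degree-$(n-1)$ one, namely that polynomial; inside the degree-$n$ comb picture this reads as the extremal polynomial of formally one higher degree with a zero ``at $\infty$'', i.e.\ case~\ref{it:Combcase1} with $h_{n_+-1}=\infty$ (equivalently, by reflection, case~\ref{it:Combcase4} with $h_{n_-+1}=\infty$; we fix case~\ref{it:Combcase1}). This is the claimed entry point.

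For the cyclic succession: since the gap length $2\d$ is fixed, within one of the four types of Corollary~\ref{cor:AkhPoly} the comb varies in a one-parameter family, and a change of type can occur only when one of the ``boundary parameters'', namely $h_{n_+-1}$ or $h_{n_-+1}$ or $\t(1)$ or $\t(-1)$, reaches the boundary of its range ($0$, $+\infty$, or an endpoint of $[\pi(n_+-1),\pi n_+]$, resp.\ of $[\pi n_-,\pi(n_-+1)]$). Inspecting the normalizations, each such event identifies exactly two of the four types: $h_{n_+-1}=0$ glues \ref{it:Combcase1} and \ref{it:Combcase2}; the configuration $E_n=E$ (i.e.\ $\t(1)=\pi n_+$ and $\t(-1)=\pi n_-$) glues \ref{it:Combcase2} and \ref{it:Combcase3}; $\t(-1)=\pi(n_-+1)$ together with $h_{n_-+1}=0$ glues \ref{it:Combcase3} and \ref{it:Combcase4}; and $h_{n_-+1}=\infty$, the additional interval having shrunk to the point $\infty\in\overline{\bbC}$ so that the degree drops and, after relabeling the base, one recovers exactly the $h_{n_+-1}=\infty$ picture of \ref{it:Combcase1}, glues \ref{it:Combcase4} to the base-shifted \ref{it:Combcase1}. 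Thus the ``type graph'' is a $4$-cycle, and by the continuity above the type of $\Pi_{n,\a,\d}$ traces a continuous walk on this cycle as $(a,b)$ moves left, starting, by the previous paragraph, at the vertex identified there.

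The genuine difficulty, and the step I expect to be the main obstacle, is to rule out that this walk reverses: one must show that at each transition the path crosses the gluing configuration transversally, equivalently that the one free parameter of each type is strictly monotone in $\a$, so that the walk advances monotonically around the $4$-cycle; and that it enters the cycle on the correct side, for $n$ even through case~\ref{it:Combcase3} (extension at $-1$, not at $1$) and for $n$ odd through case~\ref{it:Combcase1} (the additional interval appearing to the right of $1$, not to the left) as $\a$ decreases from $0$. All of this amounts to a first-order non-degeneracy of the motion, namely that the comb does not ``stall'' as $\a$ decreases and that the relevant first variations at $\a=0$ have definite sign, and it is precisely what the infinitesimal variation $\dot w$ of \eqref{def:dotW} and the analysis of Theorem~\ref{thm:ystarH0} are designed to supply; granting those sign computations, the stated succession of continuous transformations, with the parity-dependent entry point, follows.
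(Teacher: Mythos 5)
Your proposal is correct and takes essentially the same approach as the paper's (implicit) argument: the symmetry analysis at $\a=0$ is exactly the content of the remark following the theorem, and the monotonicity/transversality you rightly single out as the crux is precisely what the infinitesimal-variation Lemmas \ref{l29oct1}, \ref{l29oct2} and the lemma preceding Theorem \ref{thm:ystarH0} establish. Your continuity-via-normal-families argument and the $4$-cycle framing make explicit a bookkeeping step the paper leaves informal, but the core sign computations you invoke are the paper's own.
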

\begin{remark}
	When $\a=0$ by symmetry the extremal polynomial is even and all critical values are equally distributed on $[-1,-\d]$ and $[\d,1]$. The above procedure describes how all critical values are moved from the interval $[-1,\a-\d]$ to $[\a+\d,1]$ as $\a$ is decreasing. In the limit when $\a$ approaches $-1+\d$ all critical values will be on $[-1+2\d,1]$ and the extremal polynomial will transform into the Remez polynomial $R_{n,\d}$. 
\end{remark}

As we have already indicated at the end of Section \ref{sec:Remez} the proof of Theorem \ref{thm:transformation} requires in addition to the transition function $w_\e$ its infinitesimal transform $\dot w$. Let us introduce the notation 
\begin{align}\label{def:dotW}
w'(z,\e)=\partial_zw(z,\e),\quad\dot w(z,\e)=\partial_\e w(z,\e)\quad\text{and}\quad  \dot w(z)=\dot w(z,0).
\end{align}
\begin{lemma}
	Under an appropriate choice of $\e$, $w(z,\e)$ is differentiable with respect to $\e$ and there exists $\sigma_k>0$ such that 
	\begin{equation}\label{28oct5}
	\dot w(z):=\lim_{\e\to 0}\frac{w(z,\e)-w(z,0)}{\e}= \frac{1-z^2}{1-c_k^2}\frac{\s_k}{c_k-z},
	\end{equation}
	where $c_k=c_k(0)=\t^{-1}(\{k+ih_k\})$. 
\end{lemma}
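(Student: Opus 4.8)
The plan is to differentiate the integral representation \eqref{28oct4} from Lemma~\ref{lem:decreasingVariation} with respect to $\e$ and track what happens to the measure $\sigma_\e$ as the slit interval $I(\e)$ shrinks to the point $c_k$. Recall that $I(\e)=\t^{-1}([\pi k+ih_k(\e),\pi k+ih_k])$ is an interval inside the gap $(a_k,b_k)$ of $E$ (here I write $a_k,b_k$ for the endpoints $\t^{-1}(\pi k\mp 0)$), and that as $\e\to 0^+$ we have $h_k(\e)\to h_k$, hence $I(\e)$ collapses to the single point $c_k=\t^{-1}(\pi k+ih_k)$. The total mass $\s_k(\e):=\sigma_\e(I(\e))$ is the natural parameter: the first step is to show that $\s_k(\e)\to 0$ as $\e\to 0^+$, and that, after reparametrizing $\e$ so that $\s_k(\e)=\e$ (this is the ``appropriate choice of $\e$'' in the statement), the normalized measure $\e^{-1}\dd\sigma_\e$ converges weakly to the unit point mass $\delta_{c_k}$. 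Plugging this into \eqref{28oct4} and dividing by $\e$ gives
\begin{align*}
\dot w(z)=\lim_{\e\to 0}\frac{1}{\e}\int_{I(\e)}\frac{1-z^2}{1-\xi^2}\frac{\dd\sigma_\e(\xi)}{\xi-z}=\frac{1-z^2}{1-c_k^2}\frac{1}{c_k-z},
\end{align*}
which is \eqref{28oct5} with $\s_k=1$ under this normalization; for a general differentiable parametrization one picks up the factor $\s_k=\dot\s_k(0)>0$, and positivity is immediate because $\sigma_\e\geq 0$ and $\s_k(\e)$ is strictly increasing from $0$ as the slit is pulled down.

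To make the weak convergence $\e^{-1}\dd\sigma_\e\to\delta_{c_k}$ rigorous I would argue on the comb side. By construction $w_\e=\t_\e^{-1}\circ\t$ maps $\bbC_+$ conformally onto $\bbC_+\setminus J(\e)$, where $J(\e)=\t_\e^{-1}([\pi k+ih_k(\e),\pi k+ih_k])$ is a short analytic arc emanating from the tip $\t_\e^{-1}(\pi k+ih_k(\e))$; as $\e\to 0$ both the arc $J(\e)$ and the interval $I(\e)$ shrink to $c_k$, since the comb mappings $\t_\e$ converge to $\t$ locally uniformly (the combs $\Pi(\e)$ exhaust $\Pi$). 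This is precisely the setup of a one-slit Loewner evolution: $w_\e$ is a normalized slit map whose omitted arc has capacity tending to zero, and the classical Loewner theory (see \cite{ConwayCompAna,PomUnivalent}) gives that, with the capacity as time parameter, $\partial_\e w_\e$ at $\e=0$ has exactly the simple-pole form $\text{const}/(c_k-z)$. The $(1-z^2)$ prefactor and the $(1-c_k^2)$ in the denominator are then forced by the normalization \eqref{eq:transitionNormalization}, i.e. $w_\e(\pm1)=\pm1$, $w_\e(\infty)=\infty$, exactly as in the computation of $A_\e,B_\e$ in \eqref{eq:Aexplicit}: differentiating those two constraints at $\e=0$ pins down the residue and kills the linear and constant terms, leaving \eqref{28oct5}.

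The differentiability of $\e\mapsto w(z,\e)$ for fixed $z\in\bbC_+$, and the interchange of limit and integral, is the main technical point. The cleanest route is: (a) parametrize so that the slit tip moves at unit speed, equivalently so that the half-plane capacity of $J(\e)$ is $\e$ up to higher order; (b) use that $w_\e(z)-z$ is, by \eqref{28oct4}, a Herglotz function with total mass (essentially $A_\e-1$ plus boundary terms) controlled by $\s_k(\e)=o(1)$, so the family $\{(w_\e(z)-z)/\s_k(\e)\}$ is a normal family of Herglotz functions; (c) identify every subsequential limit by its boundary behaviour — the representing measures are supported on shrinking sets converging to $\{c_k\}$, so each limit is a nonnegative multiple of $(1-z^2)/((1-c_k^2)(c_k-z))$, and the multiple is the same for all subsequences because $\s_k(\e)$ was the normalizing factor — whence the limit exists and equals \eqref{28oct5}. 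The positivity $\s_k>0$ and the fact that it is genuinely the ``speed'' of the mass being removed from the slit then complete the proof; I expect the only real work to be the uniform (in $z$ on compacts of $\bbC_+$) control in step (b), which follows from the explicit bound $|w_\e(z)-z|\le \s_k(\e)\,\sup_{\xi\in I(\e)}\frac{|1-z^2|}{|1-\xi^2||\xi-z|}$ read off directly from \eqref{28oct4}.
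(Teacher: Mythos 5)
Your proposal takes essentially the same route as the paper's proof: reparametrize $\e$ so that a mass functional of the Herglotz measure $\sigma_\e$ is linear in $\e$, extract a weak-$*$ limit of the normalized measures along a subsequence, and identify it as a point mass at $c_k$ via the shrinking-support argument. The only difference is cosmetic: the paper normalizes by $|A_\e-1|=\left|\int_{I(\e)}\frac{\dd\sigma_\e(\xi)}{1-\xi^2}\right|$, whose continuity and strict monotonicity are read off directly from \eqref{eq:Aexplicit} and the nested-comb structure, rather than by the total mass $\sigma_\e(I(\e))$ or a Loewner capacity time parameter (which you assert is strictly increasing without proof), but since $I(\e)$ collapses to a single interior point of $(-1,1)$ these normalizations are equivalent up to bounded factors.
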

\begin{proof}
	We note that if $k=n_-+1$ or $k=n_+-1$ and we are in the situation of Theorem \ref{thm:extension} \ref{it:Combcase1} or \ref{it:Combcase4}, then due to \eqref{eq:Aexplicit} $A_\e<1$ and $A_\e>1$ otherwise. In any case, we obtain from the properties of $w_\e$ that $A_\e$ is strictly monotonic. Therefore, if we fix $\e_0$ and define 
	$
	\b_0=\left|\int_{I(\e_0)}\frac{\dd\sigma_{\e_0}(\xi)}{1-\xi^2}\right|,
	$
	then 
	$
	\e\mapsto  \left|\int_{I(\e)}\frac{\dd\sigma_\e(\xi)}{1-\xi^2}\right|
	$
	maps $[0,\e_0]$ continuously and bijectively on $[0,\b_0]$. Through a reparametrization we can achieve that
	$
	\left|\int_{I(\e')}\frac{\dd\sigma_{\e'}(\xi)}{1-\xi^2}\right|=\frac{\e'\b_0}{\e_0}.
	$
	Thus, the measures $\frac{\dd\sigma_{\e'}(\xi)}{(1-\xi^2)\e'}$ are normalized and we get by passing to a subsequence 
\begin{align*}
	\lim_{k\to \infty}\frac{w(z,\e'_k)-z}{\e'_k}=\lim\limits_{k\to \infty}\int_{I(\e'_k)}\frac{1-z^2}{1-\xi^2}\frac{1}{\xi-z}\frac{d\sigma_{\e'_k}(\xi)}{\e_k'}=\int\frac{1-z^2}{1-\xi^2}\frac{\dd \sigma_\infty(\xi)}{\xi-z}.
\end{align*}
Since the support of $\sigma_{\e'_k}$ shrinks to $\{c_k\}$, we get that $\sigma_\infty=\sigma_k\delta_{c_k}$, for $\sigma_k/|1-c_k^2|=\b_0/\e_0$. This concludes the proof. 
\end{proof}
We are also interested in the case of growing slit heights. This corresponds to 
\begin{align*}
u_\e(x)=w_\e^{-1}(x),
\end{align*}
for some transition function as defined above. Note that since $c_k(\e)\to c_k$ as $\e\to 0$, this inverse is well defined on $\bbR$ away from a vicinity of $c_k$ and we conclude since $w'(z,0)=1$ that 
\begin{align}\label{eq:uderivative}
\dot u(x)=-\dot w(x).
\end{align}

The following lemma discusses the case \ref{it:Combcase1}. We show that decreasing $h_{n_+-1}$ and simultaneously increasing $h_0$ appropriately allows us to move the gap $(\a-\d,\a+\d)$ to the left without changing its size. Let us set $(a,b)=(\a-\d,\a+\d)$ let $c$ be the critical point in $(a,b)$ and $c_+$ be the critical point outside $[-1,1]$, i.e., $c=\t^{-1}(ih_0)$, $c_+=\t^{-1}(n_+-1+ih_{n_+-1})$.

\begin{lemma}\label{l29oct1}
	Let $E(\a,\d)$ with $\a\leq 0$ be such that the corresponding extremal polynomial $A_{n,\a,\d}(z)$ corresponds to the case \ref{it:Combcase1}. 
	Then the infinitesimal variation $\dot w(x)$ generated by decreasing the height  $h_{n_+-1}$ under the constraint of  a constant gap length ($\delta=\const$) leads to an increase of $h_0$. Moreover, in this variation the gap is moving to the left, that is, $\a(\e)$ is decreasing. 
\end{lemma}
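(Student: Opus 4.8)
The plan is to realise this variation inside the two-parameter family of combs of case \ref{it:Combcase1}. For fixed $n$ (hence fixed $n_\pm$) the comb $\Pi_{n,\a,\d}$ in case \ref{it:Combcase1} has exactly two nondegenerate slits, at $0$ and at $\pi(n_+-1)$ (because $E(\a,\d)$ has two bands and its $n$-extension $E_n$ has three), so the pair $(h_0,h_{n_+-1})$ parametrises all such combs near the given configuration, and with them $a=\t^{-1}(-0)$, $b=\t^{-1}(+0)$ and $(\a,\d)=\bigl(\tfrac{a+b}{2},\tfrac{b-a}{2}\bigr)$. I would drive $h_{n_+-1}$ downward while letting $h_0$ move so as to keep $\d$ fixed. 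By \eqref{28oct5} (with $k=n_+-1$, $c_k=c_+$) the generator of the decreasing variation of $h_{n_+-1}$ is $f(z):=\frac{1-z^2}{1-c_+^2}\frac{\s_+}{c_+-z}$ with $\s_+>0$, and by \eqref{eq:uderivative} together with \eqref{28oct5} (with $k=0$, $c_k=c$) the generator of the \emph{increasing} variation of $h_0$ is $\tilde g(z):=\frac{1-z^2}{1-c^2}\frac{\s_0}{z-c}$ with $\s_0>0$. Hence the combined infinitesimal transition is $\dot w=f+\mu\,\tilde g$ for some $\mu\in\bbR$ whose sign records whether $h_0$ increases ($\mu>0$) or decreases ($\mu<0$). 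Since $\t_\e=\t\circ w_\e^{-1}$, the moving gap has endpoints $w_\e(a)$ and $w_\e(b)$, so $\dot a=\dot w(a)$, $\dot b=\dot w(b)$; imposing the constraint $\dot\d=\tfrac12\bigl(\dot w(b)-\dot w(a)\bigr)=0$ then gives
\[
\mu=-\,\frac{f(b)-f(a)}{\tilde g(b)-\tilde g(a)},\qquad
\dot\a=\tfrac12\bigl(\dot w(a)+\dot w(b)\bigr)=\dot w(a)=f(a)+\mu\,\tilde g(a).
\]

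Both assertions then reduce to elementary sign checks whose only inputs are $-1<a<c<b<1$ (the gap, hence its critical point $c=\t^{-1}(ih_0)$, lies in $(-1,1)$) and $c_+>1$ (the critical point $c_+$ lies in the gap of $E_n$ between $1$ and the additional band $I_n$). Reading off the signs of the factors, $f(a)<0$ and $\tilde g(a)<0$. Writing $\tilde g(z)=\s_0\bigl(\frac{1}{z-c}-\frac{z+c}{1-c^2}\bigr)$,
\[
\tilde g(b)-\tilde g(a)=\frac{\s_0(b-a)\bigl[(1-c^2)-(b-c)(c-a)\bigr]}{(1-c^2)(b-c)(c-a)}>0,
\]
since $(b-c)(c-a)<(1-c)(1+c)=1-c^2$ because $0<b-c<1-c$ and $0<c-a<1+c$. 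Writing $f(z)=\s_+\bigl(\frac{1}{c_+-z}+\frac{c_++z}{1-c_+^2}\bigr)$,
\[
f(b)-f(a)=-\,\frac{\s_+(b-a)\,(1+ab-2c_+\a)}{(c_+^2-1)(c_+-a)(c_+-b)}<0,
\]
since the denominator is positive and, using $ab=\a^2-\d^2$,
\[
1+ab-2c_+\a=(1+\a^2-\d^2)+(-2c_+\a)>0,
\]
the first summand being positive because $\d\in(0,1)$ and the second nonnegative because $\a\le0$ and $c_+>0$ — this is the one place where the hypothesis $\a\le0$ is used. Consequently $\mu>0$, that is, $h_0$ increases; and $\dot\a=f(a)+\mu\,\tilde g(a)<0$ as a sum of two negative terms, that is, $\a(\e)$ decreases.

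The main point requiring care is not any single computation but the bookkeeping: fixing the orientation conventions (which move enlarges the comb, hence whether \eqref{28oct5} or \eqref{eq:uderivative} provides the generator), locating $c$ inside $(-1,1)$ and $c_+$ to the right of $1$, and confirming that $(h_0,h_{n_+-1})$ is a genuine local chart so that $\d=\const$ can be solved for $h_0$ near the given configuration — this last point is exactly the nonvanishing $\partial\d/\partial h_0=\tfrac12\bigl(\tilde g(b)-\tilde g(a)\bigr)\neq0$ established above, which is what makes $\mu$ well defined. The degenerate sub-case $h_{n_+-1}=\infty$ (where $A_{n,\a,\d}$ has degree $n-1$) is handled by the same argument in the limit $c_+\to\infty$, in which $f$ collapses to $\tilde\s(z^2-1)$ with $\tilde\s>0$: then $f(a)<0$ still holds and $f(b)-f(a)=2\tilde\s(b-a)\a\le0$, so one again obtains $\mu\ge0$ and $\dot\a<0$, with equality $\mu=0$ (so $h_0$ momentarily unchanged) only in the symmetric case $\a=0$.
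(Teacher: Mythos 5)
Your proposal is correct and follows essentially the same route as the paper: you form the same two-term infinitesimal generator (the paper's $\sigma_0,\sigma_+$ correspond to your $\mu\sigma_0,\sigma_+$), impose the same constraint $\dot w(a)=\dot w(b)$, and deduce the sign of the $h_0$-variation from the positivity of $\ell(c)$ and $\ell(c_+)$ before concluding $\dot w(a)<0$. Your explicit partial-fraction bounds and the remarks on well-posedness of the chart and on the degenerate sub-case $h_{n_+-1}=\infty$ are welcome elaborations, but the underlying argument coincides with the paper's.
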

\begin{proof}
	Let $w_{n_+-1}$ and $w_0$ be transforms corresponding to a variation of the slit heights $h_{n_+-1}$ and $h_0$. In \eqref{28oct5} we chose $\s_{n_+-1}>0$ and determine the sign of $\s_0$ by the condition of constant gap length. Thus, the total transform corresponds to $w(z,\e)=w_{n_+-1}(w_0(z,\e),\e)$ and hence, by the previous computations, we find that 
	\begin{align}\label{eq:30Jan1}
	\dot{w}(z)=\frac{1-x^2}{1-c^2}\frac{\s_0}{c-x}+\frac{1-x^2}{1-c_+^2}\frac{\s_+}{c_+-x},
	\end{align}
	with $\s_+>0$. The value $\s_0$ is determined due to 
	the constraint $w(b,\e)-w(a,\e)=b-a$, i.e., $\dot w(b)=\dot w(a)$. 
	Thus, we obtain 
	\begin{align}\nonumber
	0&=\left(\frac{1-b^2}{c-b}-\frac{1-a^2}{c-a}
	\right)\frac{\s_0}{1-c^2}+\left(\frac{1-b^2}{c_+-b}-\frac{1-a^2}{c_+-a}
	\right)\frac{\s_+}{1-c_+^2}
	\\ \label{29}
	&=(b-a)\left(\frac{1+ab-c(a+b)}{(c-a)(c-b)}\frac{\s_0}{1-c^2}+\frac{1+ab-c_+(a+b)}{(c_+-a)(c_+-b)}\frac{\s_+}{1-c_+^2}\right).
	\end{align}
	Let 
	\begin{align}\label{def:ell}
	\ell(x)=1+ab-(a+b)x.
	\end{align} 
	Since $a+b=2\a\leq 0$, $\ell$ is non-decreasing. Moreover, since $a,b>-1$,  $\ell(-1)=(1+a)(1+b)>0$ and hence, $\ell(c)>0$ and $\ell(c_+)>0$. Using that the numerator is negative for both summands, we find that $\s_+>0$ implies $\s_0<0$. In other words,  the compression of the height $h_{n_+-1}$ leads to a growth of $h_0$. Since both summands are negative,
	$$
	\dot w(a)=\frac{1-a^2}{1-c^2}\frac{\s_0}{c-a}+\frac{1-b^2}{1-c_+^2}\frac{\s_+}{c_+-a}<0.
	$$
	Consequently, $\dot w(b)=\dot w(a)<0$, and we find that the ends of the interval $(a_\e,b_\e)$ are moving to the left. This concludes the proof. 
\end{proof}

Case \ref{it:Combcase4} is similar to case \ref{it:Combcase1}. However, we will encounter a certain specific phenomena. First of all, we will increase the value of $h_{n_-+1}$ to move the given gap to the left. But in order to fulfill the constraint of fixed gap length both an increasing or a decreasing of $h_0$ is possible.
\begin{lemma}\label{l29oct2}
	Let $E(\a,\d)$ with $\a\leq 0$ be such that the corresponding extremal polynomial $A_{n,\a,\d}(z)$ corresponds to the case \ref{it:Combcase4}. 
	Let $\ell$ be defined as in \eqref{def:ell} and 
	\begin{align}\label{def:eta}
	\eta:=\frac 1 2\left(\a+\frac{1-\d^2}\a\right), \quad (\eta<-1),
	\end{align}
	be its zero. If $c_-<\eta$,  the infinitesimal variation $\dot w(x)$ generated by increasing the height  $h_{n_-+1}$ under the constraint of  a constant gap length ($\delta=\const$) leads to an increase of $h_0$ and it leads to a decrease of $h_0$ if $c_-\in (\eta,-1)$. In both case, the gap is moving to the left, that is, $\a(\e)$ is decreasing. 
\end{lemma}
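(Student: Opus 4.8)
The plan is to run the infinitesimal scheme of Lemma~\ref{l29oct1} once more; the only structural novelty is that the auxiliary critical point now lies to the \emph{left} of $-1$, where $\ell$ is no longer of a fixed sign. Write $(a,b)=(\a-\d,\a+\d)$, let $c=\t^{-1}(ih_0)$ be the critical point in the gap, and let $c_-$ be the critical point on the additional interval $I_n$ to the left of $-1$; thus in case~\ref{it:Combcase4}
\[
c_-<-1<a<c<b<1 .
\]
I would perturb $h_0$ and $h_{n_-+1}$ simultaneously. Decreasing $h_0$ contributes the term $\frac{1-x^2}{1-c^2}\frac{\s_0}{c-x}$ to $\dot w$ as in \eqref{28oct5}, whereas \emph{increasing} $h_{n_-+1}$ contributes a term of the opposite orientation by \eqref{eq:uderivative}; writing the total variation in the form (the order of composition being immaterial to first order, as in the proof of Lemma~\ref{l29oct1})
\[
\dot w(x)=\frac{1-x^2}{1-c^2}\frac{\s_0}{c-x}+\frac{1-x^2}{1-c_-^2}\frac{\s_-}{c_--x},
\]
one then has $\s_-<0$ at hand, while $\s_0$ is determined by the constraint $\d=\const$.

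The constraint $\d=\const$ is the requirement $\dot w(a)=\dot w(b)$, and it gives, exactly as in the passage leading to \eqref{29},
\[
\frac{\ell(c)}{(c-a)(c-b)}\,\frac{\s_0}{1-c^2}+\frac{\ell(c_-)}{(c_--a)(c_--b)}\,\frac{\s_-}{1-c_-^2}=0,
\]
with $\ell$ as in \eqref{def:ell}. The new ingredient is the evaluation of $\ell(c_-)$: since $a+b=2\a\le 0$, $\ell$ is non-decreasing with $\ell(-1)=(1+a)(1+b)>0$, hence its only zero $\eta$ (see \eqref{def:eta}) satisfies $\eta<-1$, and therefore $\ell(c_-)<0$ precisely when $c_-<\eta$, while $\ell(c_-)>0$ precisely when $c_-\in(\eta,-1)$. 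Reading off the remaining signs from the ordering above, namely $\ell(c)>0$, $(c-a)(c-b)<0$, $1-c^2>0$, $(c_--a)(c_--b)>0$ and $1-c_-^2<0$, the displayed relation forces $\s_0$ to have the same sign as $\ell(c_-)$. By the normalization of \eqref{28oct5} this means that $h_0$ grows when $c_-<\eta$ and shrinks when $c_-\in(\eta,-1)$, which is the first assertion.

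It remains to check that in \emph{both} cases the gap moves to the left, i.e.\ $\dot w(a)=\dot w(b)<0$. Eliminating $\s_0/(1-c^2)$ between the constraint and the expression for $\dot w(a)$, a routine partial-fraction manipulation (identical in kind to the one producing \eqref{29}) reduces $\dot w(a)$ to
\[
\dot w(a)=\frac{(1-a^2)\,\s_-}{(1-c_-^2)(c_--a)}\cdot\frac{\ell(c)(c_--b)-\ell(c_-)(c-b)}{\ell(c)(c_--b)} ,
\]
and the whole point is the algebraic identity
\[
\ell(c_-)(c-b)-\ell(c)(c_--b)=(c-c_-)\,\ell(b)=(c-c_-)(1-b^2),
\]
which is immediate from $\ell(b)=1+ab-(a+b)b=1-b^2$ (and, symmetrically, $\ell(a)=1-a^2$). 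Substituting it and tracking the signs $1-a^2>0$, $\s_-<0$, $1-c_-^2<0$, $c_--a<0$, $c-c_->0$, $1-b^2>0$, $\ell(c)>0$, $c_--b<0$, one gets $\dot w(a)<0$ irrespective of the sign of $\s_0$. Hence $a(\e)$ decreases, and since the gap length is fixed, $b(\e)=a(\e)+2\d$ and $\a(\e)=a(\e)+\d$ decrease as well, which is the remaining claim. (In the borderline case $c_-=\eta$ one has $\s_0=0$, so $h_0$ is stationary to first order, but the same identity still gives $\dot w(a)<0$.)

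The bulk of the argument, the partial-fraction algebra, is routine and already rehearsed for \eqref{29}. The main obstacle is the sign bookkeeping of the last two paragraphs: one has to get the orientation right for a \emph{growing} rather than a shrinking slit --- which is exactly what produces the dichotomy for $h_0$ --- and, more importantly, to spot the identity $\ell(c_-)(c-b)-\ell(c)(c_--b)=(c-c_-)(1-b^2)$, which is the reason the leftward motion of the gap persists across the sign change of $\s_0$ between the two cases.
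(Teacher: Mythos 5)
Your proof is correct, and in the first half it coincides with the paper's: the same two-term variation $\dot w$, the same constraint $\dot w(a)=\dot w(b)$ rewritten in terms of $\ell$, and the same sign bookkeeping yielding $\operatorname{sign}(\s_0)=\operatorname{sign}(\ell(c_-))$, read off against the zero $\eta$ of $\ell$. Where you genuinely diverge is in establishing that the gap moves left. The paper argues case by case: when $c_-<\eta$ (so $\s_0<0$) both summands of $\dot w(a)$ are negative; when $c_-\in(\eta,-1)$ (so $\s_0>0$) both summands of $\dot w(b)$ are negative, and then $\dot w(a)=\dot w(b)$. You instead eliminate $\s_0$ from $\dot w(a)$ using the constraint and invoke the identity
\begin{align*}
\ell(c_-)(c-b)-\ell(c)(c_--b)=(c-c_-)\,\ell(b)=(c-c_-)(1-b^2),
\end{align*}
(which is easily checked and, by symmetry, gives $\ell(a)=1-a^2$ as well), producing a closed expression for $\dot w(a)$ whose sign is read off at once, uniformly in the sign of $\ell(c_-)$ and including the borderline $c_-=\eta$. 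Both routes are short; yours trades the two-case sign check for a single partial-fraction identity, which is a mild streamlining and makes the persistence of the leftward motion across the sign change of $\s_0$ a visible algebraic fact rather than a coincidence of two separate estimates.
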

\begin{proof}
	As before, we find that the infinitesimal variation is of the form
	\begin{equation*}
	\dot w(x)=\frac{1-x^2}{1-c^2}\frac{\s_0}{c-x}+\frac{1-x^2}{1-c_-^2}\frac{\s_-}{c_--x},\quad \s_-<0.
	\end{equation*}
	Respectively, the constraint $\dot w(b)=\dot w(a)$ corresponds to
	\begin{align*}
	\frac{\ell(c)}{(c-a)(c-b)}\frac{\s_0}{1-c^2}+\frac{\ell(c_-)}{(c_--a)(c_--b)}\frac{\s_-}{1-c_-^2}=0.
	\end{align*}
	We have that $\ell(c_-)<0$ for $c_-<\eta$. Since $c_-<-1<a<c<b$, this implies that $\s_0<0$. As before, we conclude that 
	$
	\dot w(a)<0,
	$
	and the interval is moving to the left. If $c_-\in(\eta,-1)$, then $\ell (c_-)>0$ and therefore $\s_0>0$. In this case, 
	$
	\dot w(b)<0
	$
	and again the interval is moving to the left. Finally, if $c_-=\eta$, we have $\ell(c_-)=0$ and therefore \eqref{29} implies that $\s_0=0$.
\end{proof}
\subsubsection{The cases (ii) and (iii)}
We will discuss case \ref{it:Combcase2} and case \ref{it:Combcase3} simultaneously. Recall that case \ref{it:Combcase2} corresponds to an extension of $E$ to the right and case \ref{it:Combcase3} to an extension to the left, i.e., $\t(1)\in (\pi(n_+-1),\pi n_+)$ or  $\t(-1)\in (\pi n_-,\pi( n_-+1))$. Let $\Pi\equiv\Pi(\e)$ but let us increase the normalization point $\t_\e(\pm 1)$. Let $w^\pm_\e(z)=\t_\e^{-1}(\t(z))$ be the corresponding transition functions.
\begin{lemma}\label{lem:variationTranslation}
	Let $w^\pm_\e$ be defined as above. Then there exists $\rho^\pm_\e>0$, such that 
	\begin{align}\label{eq:1June1}
	w^+_\e(z)=-1+\rho^+_\e(z+1),\quad w^-_\e(z)=1+\rho^-_\e(z-1),\quad \rho^+_\e<1,\rho_\e^->1
	\end{align} 
	and 
	\begin{align}\label{1jul4}
	\dot w^\pm(z)=\tau^\pm(1\pm z),\quad \tau^\pm<0.
	\end{align}
\end{lemma}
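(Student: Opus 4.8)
The plan is to exploit a feature that distinguishes cases \ref{it:Combcase2}, \ref{it:Combcase3} from the slit‑height variations of Lemma~\ref{lem:decreasingVariation}: here the comb $\Pi=\Pi(\e)$ keeps its shape, and only the normalization of the comb mapping is moved. Hence $\t$ and $\t_\e$ are two conformal bijections of $\bbC_+$ onto the \emph{same} domain $\Pi$, so that $w^\pm_\e=\t_\e^{-1}\circ\t$ is a conformal automorphism of $\bbC_+$; since $\t(\infty)=\t_\e(\infty)=\infty$ it fixes $\infty$, hence is affine,
\begin{equation*}
w^\pm_\e(z)=\rho^\pm_\e z+\beta^\pm_\e,\qquad \rho^\pm_\e>0,\ \beta^\pm_\e\in\bbR .
\end{equation*}

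Next I would pin down the coefficients from the endpoint that is held fixed. In case~\ref{it:Combcase2} the normalization keeps $\t_\e(-1)=\pi n_-=\t(-1)$, so $w^+_\e(-1)=\t_\e^{-1}(\pi n_-)=-1$, which forces $\beta^+_\e=\rho^+_\e-1$ and gives the first relation in \eqref{eq:1June1}; symmetrically, in case~\ref{it:Combcase3} one keeps $\t_\e(1)=\pi n_+=\t(1)$, whence $w^-_\e(1)=1$ and $w^-_\e(z)=1+\rho^-_\e(z-1)$. For the inequalities I would test the \emph{moving} endpoint. In case~\ref{it:Combcase2} the point $1$ is interior to the band of $E_n$ carrying it — this is precisely the normalization $\t(1)\in(\pi(n_+-1),\pi n_+)$ of Corollary~\ref{cor:AkhPoly}\ref{it:Combcase2} — and on any band of $E_n$ both $\t$ and $\t_\e$ are real and (by the orientation of $\partial\Pi$) strictly increasing; since we increase the normalization, $\t_\e(1)>\t(1)$, so $w^+_\e(1)=\t_\e^{-1}\bigl(\t(1)\bigr)<1$, i.e.\ $2\rho^+_\e-1<1$ and $\rho^+_\e<1$. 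The same argument applied to $-1$ in case~\ref{it:Combcase3} gives $w^-_\e(-1)<-1$, i.e.\ $\rho^-_\e>1$.

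Finally I would differentiate. We have $w^\pm_0=\id$, so $\rho^\pm_0=1$, $\beta^\pm_0=0$. Choosing $\e$ (after a reparametrization, as in the proof of \eqref{28oct5}) so that $\e\mapsto\t_\e(\pm1)$ is smooth and non‑degenerate — legitimate since $\t_\e(\pm1)$ depends continuously and strictly monotonically on $\rho^\pm_\e$ — and inverting the affine maps, the identity $\t_\e=\t_0\circ(w^\pm_\e)^{-1}$ reads $\t_\e(1)=\t_0\!\left(\tfrac{2}{\rho^+_\e}-1\right)$, respectively $\t_\e(-1)=\t_0\!\left(1-\tfrac{2}{\rho^-_\e}\right)$. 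Differentiating at $\e=0$ and using that $\pm1$ are smooth (non‑slit) boundary preimages, so $\t_0'(\pm1)\in(0,\infty)$, one gets $\dot\rho^+=-\bigl(2\t_0'(1)\bigr)^{-1}<0$ and $\dot\rho^-=\bigl(2\t_0'(-1)\bigr)^{-1}>0$. From the affine forms above,
\begin{equation*}
\dot w^+(z)=\dot\rho^+\,(1+z),\qquad \dot w^-(z)=\dot\rho^-\,(z-1)=-\dot\rho^-\,(1-z),
\end{equation*}
which is \eqref{1jul4} with $\tau^+=\dot\rho^+<0$ and $\tau^-=-\dot\rho^-<0$; these signs are also consistent with $\rho^+_\e<1=\rho^+_0$ and $\rho^-_\e>1=\rho^-_0$ for $\e>0$.

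The only genuinely delicate point is the last one, namely excluding $\dot\rho^\pm=0$, i.e.\ showing the variation is non‑degenerate to first order. This is exactly what the explicit identification $\dot\rho^\pm=\mp\bigl(2\t_0'(\pm1)\bigr)^{-1}$ achieves, and it rests only on the elementary fact that $\pm1$ map to interior points of bands of $E_n$, not to slit tips, so that the comb mapping is conformal there with finite nonzero derivative. Everything else is the structural observation that moving the normalization of a fixed comb produces an affine transition function, together with the monotonicity of the comb mapping along the bands.
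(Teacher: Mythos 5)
Your argument is correct and essentially coincides with the paper's: because $\Pi(\e)\equiv\Pi(0)$, $w^\pm_\e$ is an affine automorphism of $\bbC_+$ fixing $\infty$ and the non-moving endpoint, which gives \eqref{eq:1June1}, and the sign of $\rho^\pm_\e-1$ follows from the monotone motion of $\t_\e(\pm 1)$ along the band. Your only addition is the explicit identification $\dot\rho^\pm=\mp\bigl(2\t_0'(\pm1)\bigr)^{-1}$ to make $\tau^\pm<0$ manifest, whereas the paper simply asserts it after invoking the ``appropriate choice of $\e$'' reparametrization already used for \eqref{28oct5}; this is a helpful clarification, not a different route.
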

\begin{proof}
	We only prove the claim for $w^+_\e$. Since $\Pi(\e)\equiv \Pi(0)$, $w_\e$ is just an affine transformation and using $w_\e(-1)=-1$ and $w_\e(\infty)=\infty$ we find \eqref{eq:1June1}. Since $\t_\e(1)>\t(1)$, we obtain that $w_\e(1)=\t_\e^{-1}(\t(1))<\t_\e^{-1}(\t_\e(1))=1$ and thus $\rho_\e<1$. Therefore,
	\begin{align*}
	\dot w(z)=\lim\limits_{\e\to\infty}\frac{w_\e(z)-z}{\e}=(z+1)\lim\limits_{\e\to\infty}\frac{\rho_\e-1}{\e}=\tau<0.
	\end{align*}
\end{proof}
\begin{lemma}
	Let $E(\a,\d)$ with $\a\leq 0$ be such that the corresponding extremal polynomial $A_{n,\a,\d}(z)$ corresponds to the case \ref{it:Combcase2} (case \ref{it:Combcase3}). Then the infinitesimal variation $\dot w(x)$ generated by increasing $\t(1)$ (increasing $\t(-1)$) under the constraint of  a constant gap length ($\delta=\const$) leads to an increase (decrease) of $h_0$. Moreover, in this variation the gap is moving to the left, that is, $\a(\e)$ is decreasing. 
\end{lemma}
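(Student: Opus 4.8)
\emph{Strategy.} The plan is to repeat the infinitesimal bookkeeping of Lemma~\ref{l29oct1}, but now coupling the variation of $h_0$ from \eqref{28oct5} with the affine normalization variation \eqref{1jul4} in place of a second slit. For case~\ref{it:Combcase2} I would compose the transition function $w_0(\cdot,\e)$ that varies $h_0$ with the affine map $w^+_\e$ of Lemma~\ref{lem:variationTranslation} (increasing $\t(1)$), linking the two parameters so that the gap length stays equal to $2\d$. Since $w'(z,0)=1$, the infinitesimal variations add, so with $(a,b)=(\a-\d,\a+\d)$ and $c=\t^{-1}(ih_0)\in(a,b)$ the critical point in the gap,
\begin{equation*}
\dot w(x)=\tau^+(1+x)+\frac{1-x^2}{1-c^2}\,\frac{\s_0}{c-x},\qquad \tau^+<0,
\end{equation*}
where $\s_0$ is the only free parameter, to be determined by the constant-gap condition $\dot w(a)=\dot w(b)$. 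Case~\ref{it:Combcase3} is identical with $w^+_\e$ replaced by $w^-_\e$, i.e.\ with $1+x$ replaced by $1-x$.

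\emph{Pinning down $\s_0$.} First I would feed into $\dot w(a)=\dot w(b)$ the partial-fraction identity already used to obtain \eqref{29}, namely $\tfrac{1-a^2}{c-a}-\tfrac{1-b^2}{c-b}=\tfrac{(a-b)\ell(c)}{(c-a)(c-b)}$ with $\ell$ as in \eqref{def:ell}. This gives
\begin{equation*}
\tau^+=-\frac{\s_0\,\ell(c)}{(1-c^2)(c-a)(c-b)}\qquad\text{in case \ref{it:Combcase2}},
\end{equation*}
and the same with the opposite sign in case~\ref{it:Combcase3} (because $1+x$ becomes $1-x$). Since $\d-1<\a\le0$ we have $-1<a<c<b<1$, so $1-c^2>0$ and $(c-a)(c-b)<0$; and, exactly as in Lemma~\ref{l29oct1}, $\ell$ is non-decreasing with $\ell(-1)=(1+a)(1+b)>0$, hence $\ell(c)>0$. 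Consequently $\tau^+$ and $\s_0$ carry the same sign, so $\tau^+<0$ forces $\s_0<0$, which by the convention of \eqref{28oct5} means $h_0$ grows; in case~\ref{it:Combcase3} the sign flip makes $\tau^-$ and $\s_0$ carry opposite signs, so $\tau^-<0$ forces $\s_0>0$ and $h_0$ shrinks.

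\emph{The gap moves left.} For case~\ref{it:Combcase2}, with $\s_0<0$ and $\tau^+<0$, both terms on the right of $\dot w(a)=\tau^+(1+a)+\tfrac{1-a^2}{1-c^2}\tfrac{\s_0}{c-a}$ are negative (recall $a,c\in(-1,1)$ with $c>a$), so $\dot w(b)=\dot w(a)<0$ and the endpoints of $(a_\e,b_\e)$ — hence $\a(\e)$ — decrease. For case~\ref{it:Combcase3} this estimate at $a$ is inconclusive since now $\s_0>0$; instead I would evaluate at the right endpoint, where $c-b<0$ makes the $\s_0$-term negative as well, so $\dot w(b)=\tau^-(1-b)+\tfrac{1-b^2}{1-c^2}\tfrac{\s_0}{c-b}<0$ and again $\dot w(a)=\dot w(b)<0$. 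Thus $\a(\e)$ decreases in both cases.

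\emph{Main difficulty.} The partial-fraction identity and the sign count are routine and essentially borrowed from Lemma~\ref{l29oct1}; the one genuinely new point, and the only place where cases~\ref{it:Combcase2} and~\ref{it:Combcase3} diverge, is that the sign of $\dot w$ on the gap must be read off at the correct endpoint — $a$ for case~\ref{it:Combcase2} but $b$ for case~\ref{it:Combcase3}. I would also make sure to verify the admissible range of $\a$ that keeps $a,b,c\in(-1,1)$ and $\ell(c)>0$, and to record that $\tau^\pm<0$, the direction produced by Lemma~\ref{lem:variationTranslation}, is exactly the one that moves the gap to the left and is therefore the one paired with the gap-length constraint.
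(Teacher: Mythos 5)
Your proof is correct and follows essentially the same route as the paper: compose the $h_0$-variation with the affine variation from Lemma~\ref{lem:variationTranslation}, impose $\dot w(a)=\dot w(b)$, use $\ell(c)>0$ with $(c-a)(c-b)<0$ to read off the sign of $\sigma_0$, and then check the sign of $\dot w$ at a gap endpoint. The paper only writes out case~\ref{it:Combcase2}; you correctly supply case~\ref{it:Combcase3}, including the sign flip in the $\tau$--$\sigma_0$ relation and the observation that one must then evaluate $\dot w$ at $b$ rather than $a$.
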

\begin{proof}
	We only prove the case \ref{it:Combcase2}.
	We have
	\begin{align}\label{1jul5}
	\dot w(x)=\tau (x+1)+\frac{1-x^2}{1-c^2}\frac{\s_0}{c-x}, \quad \tau<0.
	\end{align}
	 Applying the constraint $\dot w(a)=\dot w(b)$, we obtain
	$$
	0=(b-a)\left(\tau+\frac{1+ab-(a+b)c}{(c-a)(c-b)}\frac{\s_0}{1-c^2}.
	\right)
	$$
	Since $\ell(c)>0$, this implies $\s_0<0$ and thus $h_0$ is increasing. Moreover, $\dot w(a)<0$, which concludes the proof. 
\end{proof}
We summarize our results in the following theorem:

\begin{theorem}\label{thm:ystarH0}
	Let $\eta$ be defined by \eqref{def:eta}. Then we have:
	\begin{enumerate}[label=$\bullet$]
		\item[(i)]  $h_0$ is increasing,
		\item[(ii)]  $h_0$ is increasing,
		\item[(iii)] $h_0$ is decreasing,
		\item[(iv)]  if $c_-<\eta$, then $h_0$ is decreasing, if $c_->\eta$, then $h_0$ is increasing. 
	\end{enumerate}
In all cases $\a$ is decreasing.
\end{theorem}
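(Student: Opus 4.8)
The plan is to read the theorem off from the four variational lemmas of this subsection, so the ``proof'' is mainly a matter of matching each item to the lemma that establishes it. Item~(i) is the conclusion of Lemma~\ref{l29oct1}; items~(ii) and~(iii) are the two halves of the Lemma directly preceding this theorem (the one treating cases~\ref{it:Combcase2} and~\ref{it:Combcase3}); item~(iv), including the threshold $\eta$ of \eqref{def:eta} and the boundary value $\s_0=0$ at $c_-=\eta$, is Lemma~\ref{l29oct2}. The final clause, that $\alpha(\e)$ is decreasing in every case, is likewise contained in each of these lemmas, where it is obtained from the sign of $\dot w(a)=\dot w(b)<0$.

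What remains to be explained is why the underlying sign computation proceeds the same way in cases~(i)--(iii) but genuinely bifurcates in~(iv). In each regime the admissible infinitesimal variation has, by \eqref{28oct5} and Lemma~\ref{lem:variationTranslation}, the form $\dot w(x)=\tau(1\pm x)+\frac{1-x^2}{1-c^2}\frac{\s_0}{c-x}$, possibly with $\tau=0$ and possibly with an extra term $\frac{1-x^2}{1-c_\pm^2}\frac{\s_\pm}{c_\pm-x}$, where the sign of $\s_\pm$ is dictated by which slit height is being moved. Imposing $\dot w(a)=\dot w(b)$ and using the elementary identity $\frac{1-b^2}{c-b}-\frac{1-a^2}{c-a}=(b-a)\frac{\ell(c)}{(c-a)(c-b)}$ (and its analogue at $c_\pm$), with $\ell(x)=1+ab-(a+b)x$, reduces everything to the signs of $\ell$ at the critical points. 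Since $a+b=2\alpha\le0$ makes $\ell$ non-decreasing and $a,b>-1$ makes $\ell(-1)=(1+a)(1+b)>0$, one has $\ell>0$ at every critical point in $(-1,\infty)$; this fixes the sign of $\s_0$ ($\s_0<0$ in~(i) and~(ii), $\s_0>0$ in~(iii)), hence the monotonicity of $h_0$, and re-substitution gives $\dot w(a)<0$, i.e. the gap ends move to the left.

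The single place where positivity of $\ell$ at a critical point fails is case~(iv): the outer critical point $c_-$ lies to the \emph{left} of $-1$, so $\ell(c_-)$ changes sign as $c_-$ passes the zero $\eta$ of $\ell$, which is exactly the dichotomy asserted in~(iv) and proved in Lemma~\ref{l29oct2}. I do not expect a genuine obstacle beyond this sign bookkeeping; the only point that deserves real care is continuity of the four motions at the degenerate configurations $h_{n_+-1}=0$, $\t(\pm1)\in\pi\bbZ$, $h_{n_-+1}=0$, which is what upgrades the per-case monotonicities into the single continuous transformation of Theorem~\ref{thm:transformation}. This uses the smooth-transition remark after Corollary~\ref{cor:AkhPoly} together with $w'(\cdot,0)=1$, i.e. $\dot u=-\dot w$, which matches the growing-slit variation at one endpoint to the shrinking-slit variation at the other so that the trajectories of $h_0$ and of $\alpha$ join up without jumps.
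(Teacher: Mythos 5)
Your proposal is correct and takes essentially the same approach as the paper: the paper gives no separate argument for Theorem~\ref{thm:ystarH0}, it simply introduces it with ``We summarize our results in the following theorem'', so the proof \emph{is} the aggregate of Lemma~\ref{l29oct1}, Lemma~\ref{l29oct2}, and the unnamed lemma treating cases~\ref{it:Combcase2} and~\ref{it:Combcase3}, exactly as you read it off; your unifying account of the sign bookkeeping via $\ell(x)=1+ab-(a+b)x$ and the identity $\frac{1-b^2}{c-b}-\frac{1-a^2}{c-a}=(b-a)\frac{\ell(c)}{(c-a)(c-b)}$ is a faithful extraction of what each lemma's proof does separately.

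One thing you should have flagged rather than papered over: the statement of Theorem~\ref{thm:ystarH0}\,(iv) as printed and the statement of Lemma~\ref{l29oct2} are \emph{not} consistent. The lemma (and the sign computation you sketch, which I have re-checked: for $c_-<\eta$ one gets $\ell(c_-)<0$, $(c_--a)(c_--b)>0$, $1-c_-^2<0$, $\sigma_-<0$, forcing $\sigma_0<0$, i.e.\ $h_0$ increasing) gives ``$c_-<\eta\Rightarrow h_0$ increasing, $c_-\in(\eta,-1)\Rightarrow h_0$ decreasing'', while the theorem's item~(iv) asserts the opposite. Since your proposal simply defers to Lemma~\ref{l29oct2} and says this ``is exactly the dichotomy asserted in~(iv)'', it glosses over a genuine internal inconsistency in the source. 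The mathematics you invoke is right and sides with the lemma; the theorem's item~(iv) as stated appears to be a typo, and a careful write-up should say so explicitly rather than cite the two as if they agreed. The remaining aside about continuity across the degenerations is not actually needed for Theorem~\ref{thm:ystarH0} (that is the content of Theorem~\ref{thm:transformation}), but including it does no harm.
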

\begin{remark}\label{rem:slitheights}
We have seen in the proof of Lemma \ref{lem:Remez} that $w_\e(x_0)$ increased monotonically, if some other slit height was decreased. Theorem \ref{thm:ystarH0}, in particular  case (iii) and case (iv) show that such a monotonicity is lacking for Akhiezer polynomials, which makes the situation essentially different to the Remez extremal problem. 
\end{remark}
\section{Reduction to Akhiezer polynomials}\label{sec:Reduction}
The goal of this section is to finish the proof of Theorem \ref{thm:AkhRemez}. That is, if $x_0$ is in an internal gap, then the extremal polynomial is an Akhiezer polynomial. Recall that  in contrast to Section \ref{sec:Remez} now it is possible that there is an extension outside of $ [-1,1]$, moreover, this is a generic position. All possible types of extremizer were described in Corollary \ref{cor:AkhPoly} and the discussion above the corollary. Thus, it remains to show that on $[-1,1]$ the extremal configuration in fact only has one gap, i.e., the extremal set is of the form $E(\a,\d)$ for some $\a\in (-1+\d,1-\d)$.

First we will show that $E$ has at most two gaps on $[-1,1]$. Let $T_n(z,E)$ denote the extremizer of \eqref{eq:upperEnvSet} for the set $E$.
\begin{lemma}\label{lem:SetOutside}
	Let $E\subset [-1,1]$ and let $x_0$ be in an internal gap of $E$. Then, there exists a two-gap set $\tilde{E}$, such that 
\begin{align}\label{eq:1June10}
	T_n(x_0,E)=T_n(x_0,\tilde E),\quad |E|=|\tilde E|.
\end{align}
\end{lemma}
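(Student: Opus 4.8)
The plan is to run the slit-reduction argument from Lemma \ref{lem:Remez}, but now keeping track of the extremal gap $(a,b)\ni x_0$ and of a possible extension of $E$ outside $[-1,1]$. Suppose $E$ has at least two internal gaps besides $(a,b)$, say $(a_1,b_1)$ and $(a_2,b_2)$ with $(a,b)$ not between them; equivalently, on one side of $(a,b)$ there are at least two gaps of $E$. Let $\Pi$ be the comb attached to $E$ as in Theorem \ref{thm:extension}, with $h_0>0$ the slit over the extremal gap, and fix one of the ``extra'' slits, say the one of height $h_k>0$ sitting over $(a_1,b_1)$, which is \emph{not} the slit over $(a,b)$ and not the one (if present) over the external gap $I_n$. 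First I would decrease $h_k$ while simultaneously adjusting $h_0$ (and, in the cases of Corollary \ref{cor:AkhPoly} where there is an external interval, the external slit height) so as to keep the length of the extremal gap fixed, i.e.\ $\dot w(b)=\dot w(a)$. By Lemma \ref{lem:decreasingVariation} and the infinitesimal formula \eqref{28oct5}, the transition function $w_\e$ has the form $w_\e(z)=z+\int \frac{1-z^2}{1-\xi^2}\frac{\dd\sigma_\e(\xi)}{\xi-z}$ with $\dd\sigma_\e$ supported in $(a_1,b_1)$, and the computation in Lemma \ref{lem:Remez} shows that the two adjacent bands $[b_{l},a_{l+1}]$ flanking this gap strictly lengthen, hence $|E_\e|>|E|$.

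The key point is then that this deformation preserves $T_n(x_0,E)$: since $\t(x_0)=ih_0$ and we are moving $h_0$ so that $\t_\e$ stays pinned at the endpoints of the extremal gap while the extra slit shrinks, the value $\cos\t_\e(x_0)$ is left unchanged by construction — more precisely, the deformation is designed so that $w_\e$ fixes $a$ and $b$, hence it fixes the preimage structure of the extremal gap and $\t_\e(x_0)$ moves only through the prescribed change of $h_0$, which we use up entirely to re-absorb the surplus measure back into the set. In other words, decreasing $h_k$ and correspondingly moving $h_0$ we can drive $h_k\downarrow 0$, at which point the gap $(a_1,b_1)$ closes and $E$ has one fewer internal gap, while $|E|$ has strictly increased and $T_n(x_0,\cdot)$ is unchanged; shrinking the closed set back to size $2-2\d$ (by the same monotonicity as in Lemma \ref{lem:Remez}, which only increases $T_n(x_0,\cdot)$ further, or by re-expanding a band) we land on a set $\tilde E$ with $|\tilde E|=|E|$, at most two internal gaps (the extremal one and at most one on the other side), and $T_n(x_0,\tilde E)=T_n(x_0,E)$ — actually one should be careful here and merely claim the value is unchanged after renormalizing the measure, which gives \eqref{eq:1June10}. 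Iterating over all but (at most) one gap on each side of $(a,b)$ removes every superfluous internal gap.

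The main obstacle I anticipate is bookkeeping the external interval $I_n$ and the constraint $w_\e(\pm1)=\pm1$ simultaneously with the constant-gap-length constraint: when an external interval is present (cases \ref{it:Combcase1}, \ref{it:Combcase4} of Corollary \ref{cor:AkhPoly}), the normalization \eqref{eq:transitionNormalization} together with \eqref{eq:Aexplicit} forces $A_\e$ to differ from $1$, and one has one more free slit height than constraints, so the argument must specify exactly which heights are varied and check that the sign of the resulting $\dot h_0$ is consistent with $E_\e\supset E$ on $[-1,1]$. This is precisely the delicacy flagged in Remark \ref{rem:slitheights}: unlike the Remez case, $h_0$ need not move monotonically, so the clean ``shrink the slit, enlarge the set'' picture has to be replaced by the linear-algebra computation of \eqref{29}-type identities, verifying that $\ell(c)>0$ (which holds since $a,b>-1$ and $a+b=2\a\le 0$) keeps $\s_0$ of the right sign. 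Once the external interval is handled, reducing to \emph{one} internal gap — i.e.\ showing $\tilde E=E(\a,\d)$ — is the content of the remaining lemmas of this section and is not part of the present statement.
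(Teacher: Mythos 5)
Your central mechanism does not work for the internal-gap case. The constraint you impose, $\dot w(a)=\dot w(b)$ (constant gap length), does \emph{not} preserve $T_n(x_0,E)$. By definition of the transition function, $T_n\bigl(w_\e(x_0),E_\e\bigr)=T_n(x_0,E)$, so the value at the fixed point $x_0$ is preserved exactly when $w_\e(x_0)=x_0$, i.e.\ $\dot w(x_0)=0$ --- an entirely different linear constraint from $\dot w(a)=\dot w(b)$. Under your constraint the gap $(a,b)$ translates (cf.\ Lemma \ref{l29oct1}) while $x_0$ stays put, so generically $w_\e(x_0)\ne x_0$ and $T_n(x_0,E_\e)$ changes. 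Your supporting claim ``$\t(x_0)=ih_0$'' is also false in general: $\t(c)=ih_0$ for the critical point $c\in(a,b)$, whereas for arbitrary $x_0\in(a,b)$ one only has $\t(x_0)$ on the boundary of the slit with $0\le\Im\t(x_0)\le h_0$. (The ``$\t(x_0)\in i\bbR_+$'' used in Lemma \ref{lem:Remez} is special to a boundary gap, where the slit has infinite height.) And the final step --- first grow $|E|$ while supposedly keeping $T_n(x_0,\cdot)$ fixed, then shrink the set back to the original measure --- is self-defeating: shrinking strictly increases $T_n(x_0,\cdot)$, so you would end with $T_n(x_0,\tilde E)>T_n(x_0,E)$, not the equality \eqref{eq:1June10}; ``renormalizing the measure'' is not a defined operation that repairs this.

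The paper's proof instead enforces the two constraints the lemma actually needs, $\dot w(x_0)=0$ and $\sum_j\bigl(\dot w(b_j)-\dot w(a_j)\bigr)=0$, directly, while varying three slit heights $h,h_1,h_2$ over the extremal gap and two extra gaps. The first constraint forces the factorization $\dot w(z)=f(z)H(z)$ with $H(z)=\frac{(z^2-1)(z-x_0)}{(z-c_1)(z-c_2)(z-c)}$ and $f$ linear; the zero of $f$ is pinned by the measure constraint, and the remaining scalar is chosen so that the residue of $\dot w$ at $c_1$ (or at $c_2$, if the zero of $f$ coincides with $c_1$) is positive, i.e.\ that slit decreases. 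Driving it to zero erases one extra gap while $T_n(x_0,\cdot)$ and $|E|$ are held constant throughout, yielding \eqref{eq:1June10} by iteration. No monotonicity of $h_0$ or of $|E_\e|$ is used; indeed Remark \ref{rem:slitheights} records that such monotonicity fails, which is exactly why the Lemma \ref{lem:Remez}-style ``shrink a slit, grow the set'' argument cannot be transplanted to internal gaps. The $\ell(c)>0$ identity from \eqref{29} that you invoke belongs to the study of Akhiezer combs moving at fixed gap length and plays no role in this lemma.
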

\begin{proof}
We will only prove the case that there are no boundary gaps. Moreover, let us assume that $E$ is already maximal, i.e., $T_n^{-1}([-1,1],E)\cap[-1,1]=E$. Let us write
\begin{align*}
E=[-1,1]\setminus\bigcup_{j=1}^g(a_j,b_j),
\end{align*}
and let us denote the gap which contains $x_0$ by $(a,b)$. Let $\Pi$ be the comb related to $E$ and let us assume that the slit corresponding to $(a,b)$ is placed at $0$ and let $c$ denote the critical point of $T_n$ in $(a,b)$. Assume that there are two additional gaps $(a_1,b_1)$ and $(a_2,b_2)$ with slit heights $h_k$ and critical points $c_k$, $k=1,2$. We will vary the slit heights $h_k$ and $h$ such that $w_\e(x_0)=x_0$ and $|E_\e|=|E|$. Therefore, we get
	\begin{align}\label{1jul1}
	\dot w(z)=\frac{z^ 2-1}{c_1^ 2-1}\frac{\s_{1}}{c_{1}-z}+\frac{z^ 2-1}{c_2^ 2-1}\frac{\s_{2}}{c_{1}-z}+\frac{z^ 2-1}{c^ 2-1}\frac{\s}{c-z}.
	\end{align}
	The constraints
	\begin{align}\label{eq:constraintsGeneral}
	\sum_{j=1}^{g}(\dot w(b_j)-\dot w(a_j))=0, \quad \dot w(x_0)=0
	\end{align}
	will guarantee that \eqref{eq:1June10} is satisfied. Our goal is to find $\s_1>0$ and $\s,\s_2$, such that \eqref{eq:constraintsGeneral} is satisfied. 	Let us define 
	\begin{align*}
	H(z)=\frac{(z^2-1)(z-x_0)}{(z-c_1)(z-c_2)(z-c)}.
	\end{align*}
	Due to the second constraint in \eqref{eq:constraintsGeneral}, we have 
	\begin{align}\label{eq:2Feb1}
	\dot w(z)=f(z)H(z),
	\end{align}
	where $f(z)$ is linear. Thus $f(z)=K(z-\xi)$ or $f(z)=K$. We need to check that we can always find $\xi$ such that the first constraint in \eqref{eq:constraintsGeneral} is satisfied. If 
	$
		\sum_{j=1}^{g}(H(b_j)-H(a_j))=0,
	$
	we set $f(z)=K$. If 
	$
	\sum_{j=1}^{g}(H(b_j)-H(a_j))\neq 0,
	$
	we define 
	\begin{align*}
		\xi=\frac{\sum_{j=1}^{g}(b_jH(b_j)-a_jH(a_j))}{\sum_{j=1}^{g}(H(b_j)-H(a_j))}.
	\end{align*}
		In any case, we define $\dot w$ by \eqref{eq:2Feb1} and  $\s_{1}$, $\s_2$, $\s$ as the residues of this function at $c_1, c_2, c$, respectively. Now we have to distinguish two cases. If $\xi\neq c_1$, we can choose $K$ so that $\s_1>0$ and decrease $h_1$. If at some point $\xi=c_1$, we can choose $K$ so that $\s_{2}$ will be decreased. Note that in this case $h_1$ remains unchanged. In particular, it won't increase again. Hence, this procedure allows us to ``erase'' all but one additional gap. 
	The case of boundary gaps works essentially in the same way, only using instead of the variations used above, variations as described in Lemma \ref{lem:variationTranslation}. 
\end{proof}

\begin{proof}[Ending Proof of Theorem \ref{thm:AkhRemez}] First, assume that the extremizer is in a generic position, that is there is an extra interval $I_n\subset \bbR\setminus [-1,1]$. We have
$T_n(z,E)$ with
\begin{equation}\label{1jul2}
E=[-1,1]\setminus\left((a,b)\cup(a_1,b_1)\right)
\end{equation}
and $|T_n(z,E)|\le 1$ for $z\in E\cup I_n$. The corresponding comb has three slits, which heights  we denote by $h_{out}, h, h_1$. Our goal is to show that we can reduce the value  $h_1$. Varying all three values, we get that the corresponding infinitesimal variation as described by expression \eqref{1jul1}, with the parameters $\sigma_{out},\s, \s_1$. Therefore, we still can satisfy the two constraints in \eqref{eq:constraintsGeneral}, and choose one of the parameters positive. Since the direction of the variation of the heights $h_{out}$ and $h$ is not essential for us, we choose $\s_1>0$, and therefore reduce the size of $h_1$.

In a degenerated case we can use variations, which were described in Section \ref{sec:Akhiezer}. Assume that $E$ is of the form \eqref{1jul2}, but the corresponding comb has only two non-trivial teeth of the heights $h$ and $h_1$. WLOG we assume that $b<a_1$. We have
$$
T_n(z,E)=\cos\theta(z), \quad \theta(\pi n_-)=-1,\quad \theta(\pi n_+)=1.
$$
We will apply the third variation, see Figure \ref{fig:cases3and4}, left. That is, we will reduce the value $h_1$ and move the preimage of $-1$ in the positive direction. According to 
\eqref{1jul4}, see also \eqref{1jul5}, we obtain
$$
\dot w(z)=-\tau(z-1)+\frac{1-z^2}{1-c_1^2}\frac{\s_1}{c_1-z}, \quad \tau<0,\quad \s_1>0.
$$
Let us point out that with an arbitrary choice of the parameter $\s_1>0$ and $\tau^-<0$ we get an increasing function. Therefore,
$$
\dot w(1)-\dot w(b_1)>0,\ \dot w(a_1)-\dot w(b)>0,\ \dot w(a)-\dot w(-1)>0.
$$
Thus with a small variation $\e$ of this kind we get 
$$
E_\e=[w_\e(-1),w_\e(1)]\setminus\left((w_\e(a),w_\e(b)\cup(w_\e(a_1),w_\e(b_1)\right)
$$
with
	\begin{align*}
|E_\e|=&(w_\e(1)-w_\e(b_1))+( w_\e(a_1)-w_\e(b))+ (w_\e(a)-w_\e(-1))\\
>&(1-b_1)+ (a_1-b)+(a-(-1))=|E|.
	\end{align*}

On the other hand $\dot w(z)$ has a trivial zero $z=1$ and a second one, which we denote by $y_*$. Since
$$
\tau+\frac{1+y_*}{1-c_1^2}\frac{\s_1}{c_1-y_*}=0,
$$
we get
$$
y_*=y_*(\rho_1,\rho_2)=\rho_1(-1)+\rho_2 c_1,
$$
where
$$
\rho_2 =\frac{-\tau }{-\tau+\frac{\s_1}{1-c^2_1}},\quad \rho_1=1-\rho_2,\quad \rho_{1,2}>0.
$$
Thus with different values of $\tau<0$ and $\s_1>0$, we can get  an arbitrary value $y_*\in(-1,c_1)$.

Assume that $x_0<c$. We choose $\rho_1,\rho_2$ such that $y_*>c$ (recall our assumption $b<a_1$, therefore this is possible). Then $\dot w(x_0)<0$. For a small $\e$ we get
$w_\e(x_0)<x_0$. By definition $T_n(w_\e(x_0),E_\e)=T_n(x_0,E)$. Since in this range $T_n(w_\e(x_0),E_\e)$ is increasing, we get 
$T_n(x_0,E_\e)>T_n(x_0,E)$, that is $T_n(z,E)$ was not an extremal polynomial for the Andievskii problem.

If $x_0\in (c,b)$ we choose $y_*<c$. Then $\dot w(x_0)>0$. We repeat the same arguments, having in mind that in this range $T_n(z,E)$ is decreasing. Thus, we arrive to the same contradiction $T_n(x_0,E_\e)>T_n(x_0,E)$.
\end{proof}

\section{Asymptotics}\label{sec:Asymptotics}
The goal of this section is to prove Theorem \ref{thm:TotikWidomBounds} and to give a description of the upper envelope \eqref{eq:upperEnvelopeGreen}. 
\subsection{Totik-Widom bounds}
We need an asymptotic result for Akhiezer polynomials. Recall that $E(\a,\d)=[-1,1]\setminus(\a-\d,\a+\d)$ and let $A_{n,\a,\d}$ denote the associated Akhiezer polynomial. Moreover, we denote $\hat E_n(\a,\d)=A_{n,\a,\d}^{-1}([-1,1])=E(\a,\d)\cup I_n$ and $\hat \O_n=\overline{\bbC}\setminus\hat E_n$. We have described the shape of  the additional interval $I_n$ in Theorem \ref{thm:extension} and the discussion below. The following Lemma is known in a much more general context \cite[Proposition 4.4.]{ChriSiZiYuDuke}.
\begin{lemma}
	Let $\a,\d$ be fixed and  $A_{n,\a,\d}$ be the associated Akhiezer polynomial. Let $x_n\in I_n$ denote the single zero of $A_{n,\a,\d}$ outside of $[-1,1]$. For any $y\in (\a-\d,\a+\d)$
	\begin{align}\label{eq:AkhiezerRep}
	\log 2|A_{n,\a,\d}(y)|=nG(y,\infty,\hat\Omega_n)+o(1),
	\end{align} 
If we pass to a subsequence such that $\lim\limits_{k}x_{n_k}=x_\infty\in(\bbR\cup\{\infty\})\setminus(-1,1)$, then  
	\begin{align}\label{eq:GreenAsymptotic}
	\lim\limits_{k\to\infty}n_k(G(y,\infty;\Omega(\a,\d))-G(y,\infty;\hat\Omega_{n_k}))=G(y,x_\infty,\Omega(\a,\d)),
	\end{align} 
	where $\Omega(\a,\d)=\overline{\bbC}\setminus E(\a,\d)$. 
\end{lemma}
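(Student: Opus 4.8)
The plan is to prove the two displayed formulas almost independently. For \eqref{eq:AkhiezerRep} I would use that, since $\hat E_n=A_{n,\a,\d}^{-1}([-1,1])$ and -- away from the degenerate configurations $h_{n_+-1}=\infty$ or $h_{n_-+1}=\infty$ -- $\deg A_{n,\a,\d}=n$, one has the classical identity
$$
G(z,\infty;\hat\Omega_n)=\tfrac1n\log\bigl|A_{n,\a,\d}(z)+\sqrt{A_{n,\a,\d}(z)^2-1}\bigr|,\qquad z\in\hat\Omega_n,
$$
the branch being normalized so that the right-hand side is nonnegative and vanishes on $\hat E_n$. Since $y\in(\a-\d,\a+\d)\subset\hat\Omega_n$ and $\log|w+\sqrt{w^2-1}|=\log2|w|+o(1)$ as $|w|\to\infty$, it suffices to check $|A_{n,\a,\d}(y)|\to\infty$. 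For this I would compare with polynomials $Q_n\in\cP_n$ with $\|Q_n\|_{E(\a,\d)}\le1$ and $\tfrac1n\log|Q_n(y)|\to G(y,\infty;\Omega(\a,\d))$ (e.g.\ the Chebyshev polynomials of $E(\a,\d)$ divided by their sup-norm, for which this follows from the classical $n$-th root asymptotics; the limit is positive because $y$ lies in a gap of $E(\a,\d)$): by the extremality of $A_{n,\a,\d}$, which by Theorem~\ref{thm:extension} does not depend on the chosen point of the gap, $|A_{n,\a,\d}(y)|\ge|Q_n(y)|\to\infty$, and together with the Bernstein--Walsh inequality even $\tfrac1n\log|A_{n,\a,\d}(y)|\to G(y,\infty;\Omega(\a,\d))$.

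For \eqref{eq:GreenAsymptotic} I would build the argument around the exact identity read off from the comb of Corollary~\ref{cor:AkhPoly}: in case (i) (resp.\ case (iv)) the comb map $\t$ sends $I_n=[u_n,v_n]$ onto a base segment of length $\pi$, so, since the equilibrium measure of $\hat E_n=A_{n,\a,\d}^{-1}([-1,1])$ has density $|\t'|/(n\pi)$ on its bands (this being the density $|P'|/(n\pi\sqrt{1-P^2})$ of the equilibrium measure of $P^{-1}([-1,1])$ with $P=A_{n,\a,\d}=\cos\t$),
$$
\omega(\infty,I_n;\hat\Omega_n)=\mu_{\hat E_n}(I_n)=\tfrac1{n\pi}\,|\t(v_n)-\t(u_n)|=\tfrac1n .
$$
Put $D_n(z):=G(z,\infty;\Omega(\a,\d))-G(z,\infty;\hat\Omega_n)$; it is nonnegative, bounded and harmonic in $\hat\Omega_n$, vanishes on $E(\a,\d)$, and equals $G(\cdot,\infty;\Omega(\a,\d))$ on $I_n$. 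First I would establish $|I_n|\to0$ along the subsequence: otherwise (for $x_\infty$ finite) $\hat E_n$ converges in the Hausdorff metric to a finite union of nondegenerate intervals $\hat E_\infty\supsetneq E(\a,\d)$ with component lengths bounded below, whence -- by continuity of logarithmic capacity along such families -- the equilibrium measures converge weakly and $\mu_{\hat E_n}(I_n)\to\mu_{\hat E_\infty}(I_\infty)>0$, contradicting $\mu_{\hat E_n}(I_n)=1/n$; so $\hat E_n\to E(\a,\d)$, $I_{n_k}\to\{x_\infty\}$, and $D_n\to0$ locally uniformly. Then, writing $h_n:=\omega(\cdot,I_n;\hat\Omega_n)$ and using $|G(w,\infty;\Omega(\a,\d))-G(x_n,\infty;\Omega(\a,\d))|=O(|I_n|)$ for $w\in I_n$ (recall $x_n\in I_n$), the maximum principle gives the decomposition $D_n=G(x_n,\infty;\Omega(\a,\d))\,h_n+R_n$ with $\|R_n\|_\infty=O(|I_n|)$.

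It remains to rescale. The normalized harmonic measures $h_n/h_n(\infty)=n\,h_n$ are harmonic in $\hat\Omega_n$, vanish on $E(\a,\d)$, and equal $1$ at $\infty$; by the classical thin-obstacle asymptotics for harmonic measure -- the interval $I_{n_k}$, of length tending to $0$, collapsing to the interior point $x_\infty$ of $\Omega(\a,\d)$ -- these converge, locally uniformly on $\Omega(\a,\d)\setminus\{x_\infty\}$, to the bounded harmonic function on $\Omega(\a,\d)\setminus\{x_\infty\}$ that vanishes on $E(\a,\d)$, equals $1$ at $\infty$, and carries a logarithmic singularity at $x_\infty$, that is, to $G(\cdot,x_\infty;\Omega(\a,\d))/G(\infty,x_\infty;\Omega(\a,\d))$; in particular $\log(1/|I_{n_k}|)=(1+o(1))\,n_k\,G(x_\infty,\infty;\Omega(\a,\d))$, so $n_k|I_{n_k}|\to0$ and hence $n_k\|R_{n_k}\|_\infty\to0$. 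Using the symmetry $G(x_\infty,\infty;\Omega(\a,\d))=G(\infty,x_\infty;\Omega(\a,\d))$ this yields, locally uniformly on $\Omega(\a,\d)\setminus\{x_\infty\}$ and in particular at $z=y$,
$$
n_kD_{n_k}(z)=G(x_{n_k},\infty;\Omega(\a,\d))\cdot n_kh_{n_k}(z)+n_kR_{n_k}(z)\longrightarrow G(z,x_\infty;\Omega(\a,\d)),
$$
which is \eqref{eq:GreenAsymptotic}.

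The step I expect to be the real obstacle is precisely this quantitative thin-obstacle asymptotics and, above all, its clean interlocking with the identity $h_{n_k}(\infty)=1/n_k$, so that the $O(1)$ ``Robin-type'' constants cancel instead of having to be tracked -- this is where the proof leaves the realm of soft potential theory; the general statement \cite[Proposition~4.4.]{ChriSiZiYuDuke} could also simply be invoked here. The remaining configurations are boundary effects: $x_\infty\in\{-1,1\}$, where $I_n$ merges with $E(\a,\d)$ (the transition to cases (ii)/(iii)), so that $G(y,x_\infty;\Omega(\a,\d))=0$ and one must show $n_kD_{n_k}(y)\to0$; and $x_\infty=\infty$, together with the genuine degree drop $\deg A_{n,\a,\d}=n-1$, which are handled by the change of variable $z\mapsto1/z$ with $G(y,x_\infty;\Omega(\a,\d))$ read as $G(y,\infty;\Omega(\a,\d))$ (for fixed $\a$ the degree drop occurs for only finitely many $n$).
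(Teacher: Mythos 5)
The paper does not prove this lemma; it invokes \cite[Proposition~4.4.]{ChriSiZiYuDuke}, a much more general result. Your proposal is thus a genuinely different route: a self-contained argument specialized to the present two-interval geometry, and its skeleton is sound. The observation that the comb yields the \emph{exact} identity $\omega(\infty,I_n;\hat\Omega_n)=\mu_{\hat E_n}(I_n)=1/n$ (since $\t$ maps $I_n$ onto a full tooth-base segment of length $\pi$ inside a base of length $n\pi$) is exactly the right thing to isolate: it replaces the Robin-constant bookkeeping that would otherwise dominate a thin-obstacle expansion, and it is what makes an elementary proof feasible here but not for general sets. The handling of \eqref{eq:AkhiezerRep} via the Green function identity for polynomial inverse images plus a Bernstein--Walsh comparison is complete.

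The step you flag as the obstacle is indeed the gap, and it should not be waved off as ``classical thin-obstacle asymptotics''; here is what is missing. Since $nh_n(\infty)=1$ and each $nh_n$ is nonnegative harmonic, Harnack gives local uniform boundedness on compacts of $\Omega(\a,\d)\setminus\{x_\infty\}$ and hence subsequential locally uniform limits $H$. Any such $H$ is nonnegative harmonic, has $H(\infty)=1$, and vanishes at regular boundary points of $E(\a,\d)$ away from $x_\infty$ (local barriers together with the Harnack bound). If $H$ extended harmonically across $x_\infty$, it would be a bounded nonnegative harmonic function on $\Omega(\a,\d)$ with zero boundary values, hence $\equiv 0$, contradicting $H(\infty)=1$; so $H$ carries a positive logarithmic pole at $x_\infty$ and therefore $H=G(\cdot,x_\infty;\Omega(\a,\d))/G(\infty,x_\infty;\Omega(\a,\d))$. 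Uniqueness of the subsequential limit gives convergence of the whole sequence. Similarly, your asserted rate $\log(1/|I_{n_k}|)\sim n_kG(x_\infty,\infty;\Omega(\a,\d))$ must be deduced, not claimed: it follows from $1/n_k=\omega(\infty,I_{n_k};\hat\Omega_{n_k})$ together with the two-sided estimate $\omega(\infty,I;\Omega(\a,\d)\setminus I)\asymp G(\infty,x_\infty;\Omega(\a,\d))/\log(1/|I|)$ for a short interval $I$ collapsing onto a finite interior point $x_\infty$, and it does give $n_k|I_{n_k}|\to 0$. The boundary cases $x_\infty\in\{-1,1\}$ reduce, as you say, to $n_kD_{n_k}(y)\to 0$: use $nD_n(y)\le\bigl(\sup_{I_n}G(\cdot,\infty;\Omega(\a,\d))\bigr)\,nh_n(y)$, Harnack to bound $nh_n(y)$ by a constant times $nh_n(\infty)=1$, and $\sup_{I_n}G(\cdot,\infty;\Omega(\a,\d))\to 0$. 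One caveat: the assertion that the degree drop $\deg A_{n,\a,\d}=n-1$ occurs for only finitely many $n$ for fixed $\a$ is not obviously true; but since \eqref{eq:GreenAsymptotic} is about a subsequence along which $x_n$ exists and converges, such $n$ may simply be omitted. With these fillings your argument becomes a valid, more elementary alternative to the cited proposition.
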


\begin{proof}[{Proof of Theorem \ref{thm:TotikWidomBounds}}]
We start with the case that the sup in \eqref{eq:upperEnvelopeGreen} is attained at some internal point $\a_0\in (-1+\d,0]$. Let $T_{n,\d,x_0}$ be the extremizer of \eqref{eq:upperEnvelope} and set $\hat E_n=T_{n,\d,x_0}^{-1}([-1,1])$ and $\hat \Omega_n=\overline{\bbC}\setminus \hat E_n$.  Due to Theorem \ref{thm:AkhRemez}, $\hat E_n$ is either $[-1+2\d,1]$ or $\hat E_n=E(\a_n,\d)\cup I_n$ for some $\a_n$ and some additional interval outside of $[-1,1]$. In the following, we will denote $G(x,\infty,\Omega)=G(x,\Omega)$ and we note that by definition $G(x,\Omega(\a,\d))=G_{\a,\d}(x)$. Put $E_n=\hat E_n\cap [-1,1]$ and $\Omega_n=\overline{\bbC}\setminus E_n$. Due to extremality of $\a_0$, we have
\begin{align*}
G(x_0,\Omega(\a_0,\d))\geq G(x_0,\Omega_n).
\end{align*}
  Since $\hat{\O}_n\subset\Omega_n$, we get 
\begin{align*}
G(x_0,\Omega_n)\geq G(x_0,\hat \Omega_n).
\end{align*}
Using \eqref{eq:AkhiezerRep} or  \eqref{eq:RemezRep} and the extremality property of $T_{n,\d,x_0}(x_0)$, we get
 \begin{align*}
 nG(x_0,\hat \Omega_n)&=\log 2|T_{n,\d,x_0}(x_0)|+o(1)\\
 &\geq \log 2|T_{n}(x_0,E(\a_0,\d))|+o(1).
 \end{align*}
Equation \eqref{eq:GreenAsymptotic} yields 
 \begin{align*}
  \log 2|T_n(x_0,E(\a_0,\d))|\geq nG(x_0,\Omega(\a_0,\d))-C+o(1),
 \end{align*}
 where 
 \begin{align}\label{eq:WidomConstant}
 C=\sup_{x\in(\bbR\cup\{\infty\})} G(x,x_0,\Omega(\a_0,\d)).
 \end{align}
By definition $|T_{n,\d,x_0}(x_0)|=L_n(x_0)$ and therefore combining all inequalities finishes the proof of \eqref{eq:TotikWidomAkh}. The proof for the boundary case is essentially the same. Only in the last step, due to representation \eqref{eq:RemezRep}, there is no extra constant C (due to the fact that there is no extension of the domain) and we get \eqref{eq:TotikWidomRem}.
\end{proof}

\subsection{The asymptotic diagram}\label{sec:AsympDiagram}
In this section we introduce an asymptotic diagram, which provides a  description of the upper envelope $\Phi_\d(x)$.  First of all we prove Lemma \ref{lem:UpperEnvelopeGreen}. 
\begin{proof}[Proof of Lemma \ref{lem:UpperEnvelopeGreen}]
	Recall the explicit representation of Green functions for two intervals as elliptic integrals. For $a=\a-\d, b=\a+\d$ we have
	\begin{align}\label{eq:GreenElliptic}
	G_{\a,\d}(x)=\int_a^x\frac{(c-\xi)d\xi}{\sqrt{(\xi^2-1)(\xi-a)(\xi-b)}}, 
	\end{align}
	where
	\begin{align}\label{eq:CriticalPoint}
	c=c(\a)=\frac{\int_a^b\frac{\xi d\xi}{\sqrt{(\xi^2-1)(\xi-a)(\xi-b)}}}{\int_a^b\frac{d\xi}{\sqrt{(\xi^2-1)(\xi-a)(\xi-b)}}}.
	\end{align}

	If for fixed $x_0$ the sup is attained at an interval point, then clearly 
	\begin{align}\label{eq:partialGreen}
	\pd_\a G_{\a,\d}(x)=0
	\end{align}
	holds. Thus, it remains to show that \eqref{eq:partialGreen} has a unique solution $x_0(\a)$. Due to \eqref{eq:GreenElliptic} we have 
	$$
	\pd_\a\log \pd_xG_{\a,\d}(x)=\frac{\dot c}{c-x}-\frac 1 2\frac{\dot a}{a-x}-\frac 1 2\frac{\dot b}{b-x}
	$$
	Since $\dot a=\dot b=1$ we get
	\begin{equation}\label{24jun1}
	\pd_x  \pd_\a G_{\a,\d}(x)=\left\{\dot c+\frac{(c-x)(x-\a)}{(x-a)(x-b)}\right\}\frac 1{\sqrt{(x^2-1)(x-a)(x-b)}}.
	\end{equation}
	Note that
	$$
	\inf_{x\in(a,b)}\frac{(c-x)(x-\a)}{(x-a)(x-b)}>-1.
	$$
	Since  the distance $|c(\a)-b(\a)|$ monotonically increases with $|\a|$, we have $\dot c(\a)>1$. Therefore, we get $\pd_x\pd_\a G_{\a,\d}(x)>0$ in \eqref{24jun1}. That is, $\pd_\a G_{\a,\d}(x)$ is increasing for $x\in (a,b)$. Moreover, $\lim_{x\to a} \pd_\a G_{\a,\d}(x)=-\infty$ and  $\lim_{x\to b} \pd_\a G_{\a,\d}(x)=+\infty$ and we obtain that a zero $x_0(\a)$
	of the function $\pd_\a G_{\a,\d}(x)$ in $(a,b)$
	exists and is unique. 
\end{proof}

Thus, the limiting behavior of $L_{n,\d}(x_0)$, $n\to\infty$, can be distinguished by a diagram with two curves, which we describe in the following proposition, see also Example \ref{ex1}.

\begin{proposition}\label{prop:UpperEnvelope} In the range  $x\in[-1,0]$,
	$\Phi_\d(x)$ represents the upper envelope of the following two curves.
	The first one is given explicitly
	\begin{equation}\label{22jun1}
	y=G_\d(x)=\log \left(\frac{\d-x}{1-\d}+\sqrt{\left(\frac{\d-x}{1-\d}\right)^2-1}\right),
	\end{equation}
	and the second one is given in parametric form
	\begin{equation}\label{22jun2}
	x=x_0(\a),\quad y=G_{\a,\d}(x_0(\a)), \quad \a\in (-1+\d,0].
	\end{equation}
	Moreover, the end points of the last curve are given explicitly by
	\begin{equation}\label{22jun3}
	\left(0,\frac 1 2\log\frac{1+\d}{1-\d}\right) \ \text{for}\ \a=0,\quad (-1+2\d,0)\ \text{for}\ \a\to -1+\d.
	\end{equation}
\end{proposition}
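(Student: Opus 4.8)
The plan is to deduce the proposition from Lemma \ref{lem:UpperEnvelopeGreen} together with the closed-form and elliptic-integral representations of the Green functions, and then to analyse the two degenerate values of $\a$. First, that $\Phi_\d$ is the upper envelope of the two curves on $[-1,0]$ is essentially a restatement of Lemma \ref{lem:UpperEnvelopeGreen}. By definition $\Phi_\d(x)=\sup_{\a\in(-1+\d,0]}G_{\a,\d}(x)$. By \eqref{eq:GreenBoundary} (and, for $x\in[-1+2\d,0]$, the trivial fact that $x\in E(\a,\d)$ once $\a$ is close to $-1+\d$, so $G_{\a,\d}(x)=0$) one always has $G_\d(x)=\lim_{\a\to -1+\d}G_{\a,\d}(x)\le\Phi_\d(x)$, and $G_{\a,\d}(x_0(\a))\le\Phi_\d(x_0(\a))$ for every interior $\a$; conversely Lemma \ref{lem:UpperEnvelopeGreen} says that for each $x$ the supremum is attained either as $\a\to -1+\d$, with value $G_\d(x)$ (a point of the first curve), or at an interior $\a$ with $x=x_0(\a)$, with value $G_{\a,\d}(x_0(\a))$ (a point of the second curve). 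Hence $\Phi_\d$ coincides pointwise with the larger of the two curves.

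For the explicit curve \eqref{22jun1}, I would use that the affine map $\xi\mapsto(\xi-\d)/(1-\d)$ carries $E(\d)=[-1+2\d,1]$ onto $[-1,1]$ and fixes $\infty$, so conformal invariance of the Green function and $G_{[-1,1]}(w)=\log(|w|+\sqrt{w^2-1})$ give \eqref{22jun1} for $x\le -1+2\d$ (where $(x-\d)/(1-\d)\le -1$); equivalently this is read off from \eqref{eq:RemezRep} and $R_{n,\d}(x)=\mathfrak{T}_n((\d-x)/(1-\d))=\cosh(n\,\mathrm{arccosh}\,(\d-x)/(1-\d))$. The parametric form \eqref{22jun2} is then merely the description of the second curve, with $G_{\a,\d}(x_0(\a))$ the integral \eqref{eq:GreenElliptic}, \eqref{eq:CriticalPoint} evaluated at $x=x_0(\a)$. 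For the endpoint $\a=0$: $E(0,\d)$ is symmetric under $x\mapsto -x$, so $G_{0,\d}$ is even and $\pd_\a G_{\a,\d}(x)\big|_{\a=0}$ is odd in $x$, whence its unique zero in the symmetric gap $(-\d,\d)$ is $x=0$ and $x_0(0)=0$; moreover $c(0)=0$ (the numerator in \eqref{eq:CriticalPoint} is an odd integral), so \eqref{eq:GreenElliptic} collapses, after $u=\xi^2$, to $G_{0,\d}(0)=\tfrac12\int_0^{\d^2}\frac{du}{\sqrt{(1-u)(\d^2-u)}}$, and the elementary antiderivative $-2\log(\sqrt{1-u}+\sqrt{\d^2-u})$ yields $\tfrac12\log\frac{1+\d}{1-\d}$. (Alternatively $z\mapsto(1+\d^2-2z^2)/(1-\d^2)$ is a degree-two polynomial carrying $\overline{\bbC}\setminus E(0,\d)$ properly onto $\overline{\bbC}\setminus[-1,1]$, so $G_{0,\d}(z)=\tfrac12 G_{[-1,1]}((1+\d^2-2z^2)/(1-\d^2))$, and $z=0$ gives the image $\tfrac{1+\d^2}{1-\d^2}$.)

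For the endpoint $\a\to -1+\d$ the gap $(\a-\d,\a+\d)$ degenerates onto $(-1,-1+2\d)$ while the band $[-1,\a-\d]$ shrinks to the point $-1$, and by \eqref{eq:GreenBoundary} $G_{\a,\d}\to G_\d$ locally uniformly on the limit gap. Since $x_0(\a)\in(\a-\d,\a+\d)$ we get $\limsup_{\a\to -1+\d}x_0(\a)\le -1+2\d$ for free, so I would prove the matching lower bound by showing that for each fixed $x\in(-1,-1+2\d)$ and all $\a$ sufficiently close to $-1+\d$ one has $\pd_\a G_{\a,\d}(x)<0$, hence $x<x_0(\a)$ — recall from the proof of Lemma \ref{lem:UpperEnvelopeGreen} that $\pd_\a G_{\a,\d}$ is increasing on the gap with the single zero $x_0(\a)$. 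Equivalently this is the statement that, as $\a$ increases away from $-1+\d$, the value $G_{\a,\d}(x)$ at such a fixed $x$ decreases from $G_\d(x)>0$ back towards $0$ (the value reached when $x$ becomes the left endpoint $\a-\d$), i.e.\ that such $x$ is not in the range of $x_0(\cdot)$. Granted $x_0(\a)\to -1+2\d$, one obtains $G_{\a,\d}(x_0(\a))\to 0$ as follows: $c(\a)\to -1$ (the weight in \eqref{eq:CriticalPoint} develops a logarithmic singularity at the merging point $-1$), so for $\a$ near $-1+\d$ the point $x_0(\a)$ lies in $(c(\a),b(\a))$ with $b(\a)=\a+\d$, and $G_{\a,\d}(x)=\int_x^{b(\a)}(\xi-c(\a))\big((1-\xi^2)(\xi-a(\a))(b(\a)-\xi)\big)^{-1/2}\,d\xi\le C\sqrt{b(\a)-x}$ with $C$ bounded as $\a\to -1+\d$; since $b(\a)-x_0(\a)\to 0$ this gives $G_{\a,\d}(x_0(\a))\to 0=G_\d(-1+2\d)$.

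The step I expect to be the real obstacle is the monotonicity-in-$\a$ statement used to get $\liminf_{\a\to -1+\d}x_0(\a)\ge -1+2\d$. It does not follow from the usual nested-domain comparison of Green functions, since $E(\a,\d)$ neither grows nor shrinks monotonically in $\a$, and (since $x_0(\cdot)$ need not itself be monotone in $\a$) the right object to control is its range; this has to be squeezed out of the elliptic-integral representation \eqref{eq:GreenElliptic}, \eqref{eq:CriticalPoint} and the sign identity \eqref{24jun1} already established for Lemma \ref{lem:UpperEnvelopeGreen}.
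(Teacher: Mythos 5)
Your handling of the two easy pieces matches the paper: the reduction of the envelope statement to Lemma \ref{lem:UpperEnvelopeGreen} is exactly what the paper does, and your computation of the $\a=0$ endpoint via the symmetry $c(0)=0$, $x_0(0)=0$, and the substitution $u=\xi^2$ (or the degree-two map to a single interval) is correct and equivalent to the paper's one-line remark. Your derivation of \eqref{22jun1} from conformal invariance is also fine, though the paper just cites this as well known. The argument you sketch for $G_{\a,\d}(x_0(\a))\to 0$ once $x_0(\a)\to -1+2\d$ is known (the $O(\sqrt{b-x})$ bound with controlled constant near the right gap edge) is sound, and the paper itself leaves this step implicit.

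The genuine gap, which you yourself flag, is the claim $x_0(\a)\to -1+2\d$, i.e.\ the lower bound $\liminf_{\a\to -1+\d} x_0(\a)\geq -1+2\d$. You propose to show $\pd_\a G_{\a,\d}(x)<0$ for each fixed $x\in(-1,-1+2\d)$ and all $\a$ near $-1+\d$, but you do not carry this out, and the missing ingredient is not a ``monotonicity-in-$\a$ statement'' but a concrete asymptotic mechanism: the divergence of $\dot c(\a)$ as $\a\to -1+\d$. The paper substitutes $\xi(\psi,\a)=\a-\d\cos\psi$, writes $\pd_\a G_{\a,\d}=I_1+I_2-\dot c(\a)I_3$ as in \eqref{24feb1}, evaluates at the parametrized point $x(\a)=\a-\d\cos(\pi-\e)$ so that $I_2\sim a_1(\a)/\e$ and $I_3\sim b_1(\a)\e$, and then observes that since $\dot c(\a)\to+\infty$ (Lemma \ref{lem:cdotDiverges}, which requires the whole appendix), the two competing terms $a_1/\e$ and $\dot c\, b_1\e$ balance only at $\e\sim\dot c(\a)^{-1/2}\to 0$; choosing $\e_\pm(\a)=\k(\a)/(\sqrt{\dot c(\a)}\pm1)$ produces a shrinking interval $[x_-(\a),x_+(\a)]\to\{-1+2\d\}$ at whose endpoints $\pd_\a G_{\a,\d}$ has opposite signs, squeezing $x_0(\a)$ to $-1+2\d$. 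None of this appears in your proposal, and the fact that $\dot c(\a)\to\infty$ is not something you invoke or derive, yet it is the hinge on which the whole endpoint computation turns.

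A further small point: your ``equivalently, such $x$ is not in the range of $x_0(\cdot)$'' is not equivalent to your displayed claim and is in fact false for $x$ in the paper's regions~(b) and~(c) of Example \ref{ex1}, where $G_{\a,\d}(x)$ has both a local maximum and a local minimum as a function of $\a$, so that $x$ is hit twice by $x_0(\cdot)$. Your quantitative claim ($\pd_\a G_{\a,\d}(x)<0$ only for $\a$ close to $-1+\d$) is the right one; the parenthetical restatement overreaches.
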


\begin{proof}
	According to Lemma \ref{lem:UpperEnvelopeGreen}, if $\Phi_\d(x)$ is assumed at the end point $\a\to-1+\delta$, then it is the Green function in the complement to the interval $[-1+2\d,1]$, which has a well known representation \eqref{22jun1}. Alternatively, $x=x_0(\a)$ and $\Phi_\d(x)=G_{\a,\d}(x_0(\a))$ for a certain $\a$ in the range, what is \eqref{22jun2}.
	
	Further, for $\a=0$, $G_{0,\d}(x)$ is the Green function related to  two symmetric intervals. Due to the symmetry $x_0(0)=0$ and 
	$G_{0,\d}(x)$ can be reduced to the Green function of a single interval $[\d^2,1]$. We get
	$$
	G_{0,\d}(0)=\frac 1 2\log\frac{1+\d}{1-\d}.
	$$
	
	Thus, it remains to prove the last statement of the proposition. Our proof is based on expressing $\pd_\a G_{\a,\d}$ in terms of elliptic integrals and manipulating those.
	It will be convenient to make a standard substitution in \eqref{eq:GreenElliptic}. Let $\xi(\psi,\a)=\a-\d\cos\psi$, then
	\begin{align}\label{eq:2June18}
	G_{\a,\d}(x)=\int_{\phi(x,\a)}^\pi\frac{\xi(\psi,\a)-c(\a)}{\sqrt{1-\xi(\psi,\a)^2}}d\psi,\quad \phi(x,\a)= \arccos\frac{\a-x}{\d}.
	\end{align}
	Differentiating \eqref{eq:2June18} we get
	\begin{align}\label{24feb1}
	\pd_\a G_{\a,\d}(x)=I_1(x,\a)+I_2(x,\a)-\dot c(\a) I_3(x,\a)
	\end{align}
	where
	\begin{align*}
	I_1(x,\a)=&\int_{\phi(x,\a)}^\pi\frac{1- c(\a)\xi(\psi,\a)}{(1-\xi(\psi,\a)^2)\sqrt{1-\xi(\psi,\a)^2}}d\psi
	\\
	I_2(x,\a)=&\frac{x-c(\a)}{\sqrt{(1-x^2)(\d^2-(x-\a)^2)}},
	\quad I_3(x,\a)=\int_{\phi(x,\a)}^\pi\frac{d\psi}{\sqrt{1-\xi(\psi,\a)^2}}.
	\end{align*}
	Let  $\e=\e(\a)$ be such that $\e\to 0$ as $\a\to  -1+\delta$, to be chosen later on, and   let 
	\begin{align}\label{eq:2April1}
	x(\a)=\a-\d\cos(\pi-\e).
	\end{align} 
	Direct estimations  show that 
	\begin{equation*}
	I_1(x(\a),\a)=o(1),\quad I_2(x(\a),\a)=\frac{a_1(\a)}{\e}+o(1),
	\end{equation*}
	where
	$\lim_{\a\to -1+\d}a_1(\a)>0$. The integral $I_3(x(\a),\a)$ also tends to zero, but we will need  a more precise decomposition
	\begin{equation}\label{22jun9}
	I_3(x(\a),\a)=b_1(\a)\e+\frac{b_2(\a)}{3}\e^3+O(\e^5)
	\end{equation}
	with $\lim_{\a\to -1+\d}b_1(\a)>0$.  
	Indeed, we have
	\begin{align*}
	I_3= \int_{\phi(x(a),\a)}^\pi&\frac{d\psi}{\sqrt{1-\xi(\psi,\a)^2}}=\int_{0}^\e\frac{d\psi}{\sqrt{1-(\a+\d\cos\psi)^2}}.
	\end{align*} 
	Therefore, for $\psi$ sufficiently small we can use the expansion 
	$$
	\frac{1}{\sqrt{1-(\a+\d\cos\psi)^2}}=b_1(\a)+b_2(\a)\psi^2+O(\psi^4),
	$$ 
	where $O(\psi^4)$ is uniform in $\a$. We get \eqref{22jun9}. In the same time, 
	$$
	\lim_{\a\to -1+\d}b_1(\a)=\frac{1}{\sqrt{1-(-1+2\d)^2}}>0.
	$$

	Collecting all three terms, we obtain
	\begin{align}
	\pd_\a G_{\a,\d}(x(\a))&=\frac{a_1(\a)}{\e}-\dot c(\a)\left(b_1(\a)\e+\frac{b_2(\a)}{3}\e^3\right)+o(1) \nonumber\\
	&=\k_1(\a)\left(\frac{\k(\a)}{\e}-\dot c(\a)\left(\frac{\e}{\k(\a)}+\frac{\k_2(\a)}{3}\e^3\right)\right)+o(1),\label{22jun5}
	\end{align}
	where $\k_1(\a)=\sqrt{a_1(\a)b_1(\a)}$, $\k(\a)=\sqrt{a_1(\a)/b_1(\a)}$ and $\k_2(\a)$ is chosen appropriately. Recall that $\k_1(\a)$ and $\k(\a)$ have positive and finite limits as $\a\to -1+\d$. 
	
	Similar manipulations with elliptic integrals show that
	$\dot c(\a)\to +\infty$ as $\a\to-1+\d$. In fact,   the rate of this divergence is (see Appendix)
	\begin{align*}
	\dot c(\a)\simeq (\varepsilon\log \varepsilon)^{-2}, \quad \a=-1+\d(1+\frac 1 2 \varepsilon^2),\  \varepsilon\to 0,
	\end{align*}
	but such accuracy is not needed  for our purpose. 
	
	Having these estimations, we can find 
	a suitable interval $[x_-(\a),x_+(\a)]$, with the limit
	\begin{equation}\label{22jun11}
	\lim_{\a\to -1+\d}x_\pm(\a)= -1+2\d,
	\end{equation}
	such that
	$\pd_\a G_{\a,\d}(x)$ changes its sign in it and therefore this interval contains the unique solution of the equation $\pd_\a G_{\a,\d}(x)=0$.
	
	Define  
	\begin{align*}
	\e_\pm(\a)=\frac{\k(\a)}{\sqrt{\dot c(\a       )}\pm 1}.
	\end{align*}
	Since  $\dot c(\a)\e_\pm(\a)^3=o(1)$, \eqref{22jun5} gets the form
	\begin{align}\nonumber
		\k_1(\a)^{-1}\pd_{\a}G(x_\pm(\a),\a)&=\sqrt{\dot c(\a)}\pm 1-\frac{\dot c(\a)}{\sqrt{\dot c(\a)}\pm 1}+o(1)\\
		&=\frac{\pm 2\sqrt{\dot c(\a)}+1}{\sqrt{\dot c(\a)}\pm 1}+o(1),\label{eq:July3}
	\end{align}
	where $x_\pm(\a)$ is defined by \eqref{eq:2April1} for $\e=\e_\pm(\a)$. For $\dot c(\a)$ sufficiently large, we obtain  in \eqref{eq:July3} both positive and negative values, and simultaneously  
	we have  \eqref{22jun11}. Consequently,  $x_0(\a)\to -1+2\d$. 
	
\end{proof}

\begin{corollary}\label{cor43}
	Let $\d_*$ be a unique solution of the equation
	$$
	\d_*^2=\frac{1-\d_*}{1+\d_*},\quad \d_*\in(0,1),
	$$
	numerically $\d_*=0.543689...$. Then for $\d<\d_*$ $\Phi_{\d}(x)$ does not coincide identically with $G_\d(x)$ (in  its range $x\in (-1,0]$). 
	
	On the other hand, for an arbitrary $\d>0$ there exists $x_*(\d)>-1$ such that  $\Phi_{\d}(x)$ and $G_\d(x)$ coincide in
	$[-1,x_*(\d))$.
\end{corollary}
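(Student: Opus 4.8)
The plan is to deduce both assertions from the description of $\Phi_\d$ in Proposition~\ref{prop:UpperEnvelope}, where $\Phi_\d$ is exhibited as the upper envelope of the explicit curve \eqref{22jun1} and the parametric Akhiezer curve \eqref{22jun2}, and to treat the two assertions separately.

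For the first assertion I would compare the two curves at the single abscissa $x=0$. By the endpoint data \eqref{22jun3} the parametric curve passes through $\bigl(0,\tfrac12\log\tfrac{1+\d}{1-\d}\bigr)$, hence $\Phi_\d(0)\ge\tfrac12\log\tfrac{1+\d}{1-\d}$. If $\d\le\tfrac12$ then $0\in[-1+2\d,1]$, so $G_\d(0)=0$ and the strict inequality $\Phi_\d(0)>G_\d(0)$ is immediate. If $\d\in(\tfrac12,1)$, then \eqref{22jun1} gives $G_\d(0)=\log\frac{\d+\sqrt{2\d-1}}{1-\d}$, so $\Phi_\d(0)>G_\d(0)$ follows once the inequality $\sqrt{1-\d^2}>\d+\sqrt{2\d-1}$ is established. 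All quantities occurring here are nonnegative, and squaring twice (the intermediate inequality $1-\d^2-\d>\d\sqrt{2\d-1}$ having nonnegative left-hand side because $\d<\d_*<\tfrac{\sqrt5-1}{2}$, so that the square roots can be undone) reduces it to the polynomial inequality $\d^4-2\d+1>0$, that is $(1-\d)(\d^3+\d^2+\d-1)>0$, that is $\d^2<\frac{1-\d}{1+\d}$. Since $t\mapsto t^2-\frac{1-t}{1+t}$ is strictly increasing on $(0,1)$ with the single zero $\d_*$, this holds precisely when $\d<\d_*$, as desired.

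For the second assertion the essential input is the limit $x_0(\a)\to -1+2\d$ as $\a\to -1+\d$ that was proved inside the proof of Proposition~\ref{prop:UpperEnvelope}. Combining it with the elementary inclusion $x_0(\a)\in(\a-\d,\a+\d)$ — which already gives $x_0(\a)>\a-\d\ge -1+\e_0$ uniformly on each compact subinterval $[\d-1+\e_0,0]$, and hence, together with the limit near the left end, shows that $x_0(\a)$ stays bounded away from $-1$ for all $\a\in(\d-1,0]$ — one obtains $x_*(\d):=\inf_{\a\in(\d-1,0]}x_0(\a)>-1$. Since by Proposition~\ref{prop:UpperEnvelope} (equivalently Lemma~\ref{lem:UpperEnvelopeGreen}) the parametric curve \eqref{22jun2} has no point with abscissa below $x_*(\d)$, on the interval $[-1,x_*(\d))$ the envelope is supplied by the first curve alone, i.e.\ $\Phi_\d(x)=G_\d(x)$ there.

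I do not expect a serious obstacle: the genuinely analytic content (the limit $x_0(\a)\to-1+2\d$ and the uniqueness of $x_0(\a)$) is already in place from Proposition~\ref{prop:UpperEnvelope} and Lemma~\ref{lem:UpperEnvelopeGreen}, and what remains is elementary. The one place that calls for care is the two-step squaring in the first assertion, where one must verify that the reduction terminates exactly at the cubic $\d^3+\d^2+\d-1$ whose root defines $\d_*$, and that the intermediate positivity conditions do hold throughout the range $\d\in(\tfrac12,\d_*)$.
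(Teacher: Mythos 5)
Your proof follows essentially the same route as the paper: the first claim is obtained by comparing the two curves at $x=0$ using the endpoint data \eqref{22jun3}, and the second by setting $x_*(\d)=\inf_{\a}x_0(\a)$ and arguing via \eqref{22jun3} and continuity that this infimum exceeds $-1$, so the parametric curve cannot contribute on $[-1,x_*(\d))$. The paper merely states these two steps tersely; you fill in the elementary verifications. One sign error to fix in the first part: $\d^4-2\d+1=(\d-1)(\d^3+\d^2+\d-1)$, so $\d^4-2\d+1>0$ is equivalent to $(1-\d)(\d^3+\d^2+\d-1)<0$, not $>0$ as you wrote; your next step $\d^2<\tfrac{1-\d}{1+\d}$ is then the correct consequence, so the chain is otherwise sound.
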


\begin{proof}
	The first claim follows by a direct comparison of $G_\d(0)$ and $G_{0,\d}(0)$, see \eqref{22jun1} and \eqref{22jun3}.
	
	For a fixed $\d$ we define 
	\begin{equation}\label{22jun7}
	x_*(\d)=\inf_{\a\in(-1+\d,0]} x_0(\a)
	\end{equation}
	By \eqref{22jun3} and continuity $x_*(\d)>-1$. Thus the curve \eqref{22jun2} does not intersect the range $[-1,x_*(\d))$.
\end{proof}

\begin{remark}
	We do not claim here that $[-1,x_*(\d))$ with $x_*(\d)$ given by \eqref{22jun7} is the biggest possible interval on which $\Phi_\d(x)=G_\d(x)$, see Example \ref{ex1} bellow for details.
\end{remark}

\begin{example}\label{ex1} A numerical example of the  asymptotic diagram for $\d=0.4$ is given in Figure \ref{fig:Regions} (diagrams for other values of $\d<0.5$ look pretty the same).
	Let $x_s(\d)$ be the switching point between two (Remez and Akhiezer) extremal configurations, i.e.,
	\begin{figure}
		\begin{center}
			\includegraphics[scale=0.5]{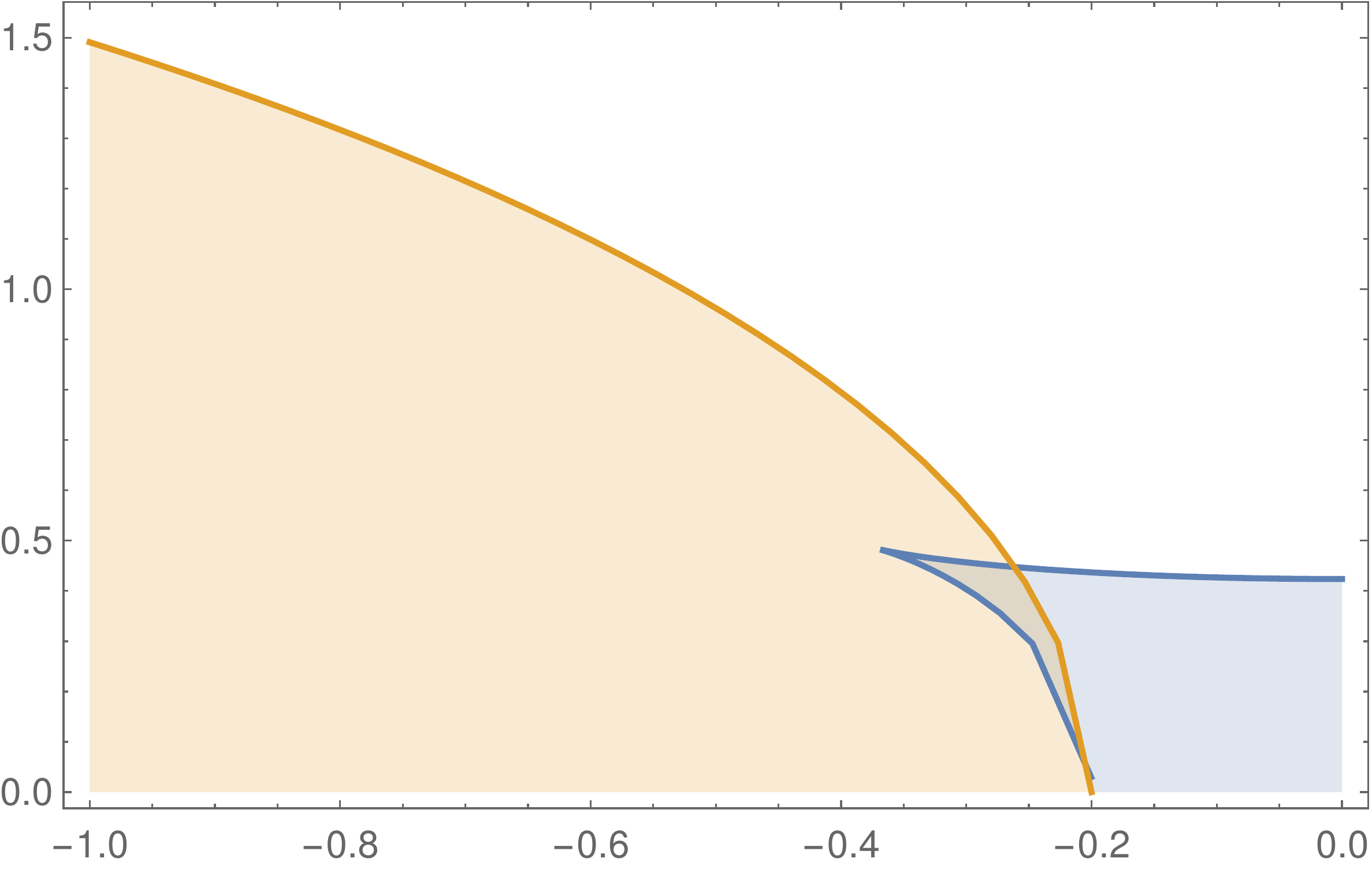}
			\caption{The asymptotic diagram for $\d=0.4$.}
			\label{fig:Regions}
		\end{center}	
	\end{figure}
\end{example}
$$
\Phi_\d(x_s)=G_\d(x_s(\d))=G_{\a_s,\d}(x_0(\a_s)),\quad x_0(\a_s)=x_s(\d).
$$
Recall that $x_*(\d)$ was defined in \eqref{22jun7}.
On the diagram we can observe the following four regions:
$(-1, x_*(\d)), (x_*(\d),x_s(\d)), (x_s(\d),-1+2\d)$ and $(-1+2\d,0)$. Note that we discuss the case $-1+2\d<0$, i.e., $\d<0.5<\d_*$.

\begin{itemize}
	\item[a)] $x\in (-1+2\d,0)$. In this case $x\in (\a-\d,\a+\d)$ implies that such an interval  is a subset of $(-1,1)$ even in the leftmost position $\a=-\d+x$. Therefore, the function $G_{\a,\d}(x)$ for a fixed $x$ and $\a\in (x-\d,x+\d)$ attains its maximum at some internal point and we get the case $\pd_\a G_{\a,\d}(x)=0$.
	\item[b)] $x\in(x_s(\d),-1+2\d)$. As soon as $x<-1+2\d$ the left boundary for a possible value of $\a$ is given by $\a-\d=-1$. Respectively the supremum of $G_{\a,\d}(x)$ for a fixed $x$ can be attained either at an internal point $\a\in(-1+\d,x+\d)$ or as the limit at the left end point. In this range it is still attained at an internal point. Note that besides the local maximum the function gets a local minimum (the second branch of  the curve \eqref{22jun2} with the same coordinate $x_0(\a)=x$).
	\item[c)] $x\in (x_*(\d),x_s(\d))$. For such $x$ the function $G_{\a,\d}(x)$  has still  its local maximum and minimum, but the biggest value is attained at the boundary point $\a=-1+\d$, i.e., $\Phi_\d(x)=G_\d(x)$.
	\item[d)] $x\in(-1,x_*(\d))$. At  $x=x_*(\d)$ the points of local maximum and minimum  of  the function $G_{\a,\d}(x)$ collide, that is, in fact, they produce  an inflection point. The function $G_{\a,\d}(x)$ become monotonic in this region. Its supremum is  the limit at the boundary point $\a=-1+\d$, see the second claim in Corollary \ref{cor43}.
\end{itemize}

\appendix

\section{Lemma on the limit of $\dot c(\a)$}

\begin{lemma}\label{lem:cdotDiverges}
	Set $\a=-1+\d(1+\frac 1 2 \e^2)$. Then $\dot c(\a)$ tends to $+\infty$ as $\e\to 0$ with the rate 
	\begin{align*}
	\dot c(\a)\simeq(\e\log\e)^{-2}.
	\end{align*}
\end{lemma}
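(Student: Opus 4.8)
The plan is to reduce $c(\a)$ to explicit elliptic integrals and then run a careful local analysis at $\psi=0$, where the degeneration $a=\a-\d\to-1$ concentrates all the divergence. First I would rewrite \eqref{eq:CriticalPoint}: with $a=\a-\d$, $b=\a+\d$ and the substitution $\xi=\a-\d\cos\psi$, $\psi\in(0,\pi)$, one has $(\xi-a)(b-\xi)=\d^2\sin^2\psi$, $d\xi=\d\sin\psi\,d\psi$, and, since for $\e$ small the whole gap lies in $(-1,1)$ so that $(\xi^2-1)(\xi-a)(\xi-b)=(1-\xi)(1+\xi)(\xi-a)(b-\xi)$, the formula collapses to
$$
c(\a)+1=\frac{M(\a)}{D(\a)},\qquad M(\a)=\int_0^\pi\sqrt{\frac{1+\xi}{1-\xi}}\,d\psi,\qquad D(\a)=\int_0^\pi\frac{d\psi}{\sqrt{1-\xi^2}},
$$
with $\xi=\xi(\psi,\a)=\a-\d\cos\psi$. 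Since $\partial_\a\xi\equiv1$ and the integrands are smooth and bounded for each fixed $\e>0$, differentiating under the integral sign is legitimate and gives $\dot D(\a)=\int_0^\pi\xi(1-\xi^2)^{-3/2}\,d\psi$ and $\dot M(\a)=\int_0^\pi(1-\xi)^{-3/2}(1+\xi)^{-1/2}\,d\psi$, hence $\dot c=(\dot M\,D-M\,\dot D)/D^2$.

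The core is the asymptotics of these four integrals as $\e\to0$. Inserting $\a=-1+\d(1+\tfrac12\e^2)$ gives $1+\xi=\d(1-\cos\psi)+\tfrac\d2\e^2$ and $1-\xi=2-\d(1-\cos\psi)-\tfrac\d2\e^2$; the second factor stays bounded away from $0$ uniformly, while the first vanishes as $\psi,\e\to0$. Fixing a small $\psi_0>0$ independent of $\e$ and using $1-\cos\psi=\tfrac12\psi^2+O(\psi^4)$, on $(0,\psi_0)$ I obtain the comparisons $(1-\xi^2)^{-1/2}=(\d(\psi^2+\e^2))^{-1/2}(1+O(\psi^2+\e^2))$ and $\xi(1-\xi^2)^{-3/2}=-(\d^{3/2}(\psi^2+\e^2)^{3/2})^{-1}(1+O(\psi^2+\e^2))$. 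Splitting each integral at $\psi_0$, bounding the tails $\int_{\psi_0}^\pi$ by $O(1)$, and using the model integrals
$$
\int_0^{\psi_0}\frac{d\psi}{\sqrt{\psi^2+\e^2}}=-\log\e+O(1),\qquad \int_0^{\psi_0}\frac{d\psi}{(\psi^2+\e^2)^{3/2}}=\frac1{\e^2}+O(1),
$$
one gets $D(\a)=-\d^{-1/2}\log\e+O(1)$ and $\dot D(\a)=-\d^{-3/2}\e^{-2}+O(|\log\e|)$ (the error in the latter being the $O(\psi^2+\e^2)$ relative correction, which integrates to $O(|\log\e|)$). The same splitting gives $\dot M(\a)=O(|\log\e|)$, while $M(\a)\to N_0:=\int_0^\pi\sqrt{\d(1-\cos\psi)/(2-\d(1-\cos\psi))}\,d\psi\in(0,\infty)$ by dominated convergence.

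Finally I would assemble $\dot c=(\dot M\,D-M\,\dot D)/D^2$: here $\dot M\,D=O((\log\e)^2)$ is negligible against $M\,\dot D=-N_0\,\d^{-3/2}\e^{-2}(1+o(1))$, and $D^2=\d^{-1}(\log\e)^2(1+o(1))$, so
$$
\dot c(\a)=\frac{N_0}{\sqrt\d}\cdot\frac{1}{\e^2(\log\e)^2}\,(1+o(1)),
$$
and since $N_0/\sqrt\d>0$ this is precisely $\dot c(\a)\simeq(\e\log\e)^{-2}$. The one genuinely delicate point is the middle step: showing that all remainders — the relative $O(\psi^2+\e^2)$ corrections and the $\int_{\psi_0}^\pi$ tails — are of strictly lower order than the leading singularities ($O(1)$ or $O(|\log\e|)$ versus $|\log\e|$ in $D$ and $\dot M$, and versus $\e^{-2}$ in $\dot D$), uniformly down to $\e=0$. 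Everything else is routine; one should only record at the outset that for $\e$ small the configuration $-1<a<b<1$ is admissible, so that \eqref{eq:CriticalPoint} and the substitution are valid.
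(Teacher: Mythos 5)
Your proof is correct and follows essentially the same route as the paper: the substitution $\xi=\a-\d\cos\phi$, differentiating under the integral, and local analysis near $\phi=0$ where the singularity concentrates, with the same split at $\phi_0$ and the same model integrals. The only (cosmetic) difference is that you write $c+1=M/D$ directly and differentiate the quotient, whereas the paper starts from $\a-c=\d u/v$ and performs a column operation on the resulting determinant to arrive at $\dot c=1+\frac{1}{v^2}\det\left[\begin{smallmatrix}I_1&I_2\\I_3&v\end{smallmatrix}\right]$; these are the same computation (indeed $\dot M=D+I_1$, $\dot D=I_2$, $M=I_3$), and your version is arguably a bit slicker since it avoids the determinant manipulation and peels off the $+1$ automatically.
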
 
\begin{proof}
	By \eqref{eq:CriticalPoint}, we have 
	\begin{align*}
	\a-c=\frac{\int_a^b\frac{(\a-\xi)d\xi}{\sqrt{(\d^2-(\xi-\a)^2)(1-\xi^2)}}}{\int_a^b\frac{d\xi}{\sqrt{(\d^2-(\xi-\a)^2)(1-\xi^2)}}}
	\end{align*}
	Making the change of variables 
	\begin{align*}
	\xi=\xi(\a,\phi)=\a-\d\cos\phi,\quad \phi\in(0,\pi)
	\end{align*}
	we get
	\begin{align*}
	\a-c=\frac{\int_0^\pi\frac{\d^2\cos\phi \sin\phi d\phi}{\d\sin\phi\sqrt{1-x^2}}}{\int_0^\pi\frac{\d\sin\phi d\phi}{\d\sin\phi\sqrt{1-x^2}}}=\d 
	\frac{\int_0^\pi\frac{\cos\phi  d\phi}{\sqrt{1-x^2}}}{\int_0^\pi\frac{d\phi}{\sqrt{1-x^2}}}
	\end{align*}
	Thus, introducing 
	\begin{align*}
	u(\a)=\int_0^\pi\frac{\cos\phi  d\phi}{\sqrt{1-\xi^2}},\quad v(\a)=\int_0^\pi\frac{ d\phi}{\sqrt{1-\xi^2}},
	\end{align*}
	we have
	\begin{align*}
	1-\dot c=\frac{\d}{v^2}\det\left[\begin{matrix} \dot u&\dot v\\ u& v\end{matrix}\right].
	\end{align*}
	Since $\pd_\a \xi(\phi,\a)=1$, we have
	\begin{align*}
	\dot u(\a)=\int_0^\pi\frac{\xi\cos\phi  d\phi}{(1-\xi^2)\sqrt{1-\xi^2}},\quad \dot v(\a)=\int_0^\pi\frac{ \xi d\phi}{(1-\xi^2)\sqrt{1-\xi^2}}.
	\end{align*}
	Using the definition of $\xi(\a,\phi)$ we get
	\begin{align*}
	\d\det\left[\begin{matrix} \dot u&\dot v\\ u& v\end{matrix}\right]&=-\det\left(\left[\begin{matrix} \dot u&\dot v\\ u& v\end{matrix}\right]
	\left[\begin{matrix} -\d&0\\ \a+1 & 1\end{matrix}\right]\right)\\
	&=
	-\det\left[\begin{matrix} \int_0^\pi\frac{\xi(1+\xi)d\phi}{(1-\xi^2)\sqrt{1-\xi^2}}&\int_0^\pi\frac{ \xi d\phi}{(1-\xi^2)\sqrt{1-\xi^2}}\\
	\int_0^\pi\frac{(1+\xi)d\phi}{\sqrt{1-\xi^2}}&\int_0^\pi\frac{  d\phi}{\sqrt{1-\xi^2}}
	\end{matrix}\right].
	\end{align*}
	Finally
	\begin{align}\label{eq:1June17}
	\dot c=1+\frac{1}{v^2}\det\left[\begin{matrix} \int_0^\pi\frac{\xi d\phi}{(1-\xi)\sqrt{1-\xi^2}}&\int_0^\pi\frac{ \xi d\phi}{(1-\xi^2)\sqrt{1-\xi^2}}\\
	\int_0^\pi\sqrt{\frac{1+\xi}{1-\xi}}d\phi&\int_0^\pi\frac{  d\phi}{\sqrt{1-\xi^2}}
	\end{matrix}\right]=1+\frac{1}{v^2}\det\left[\begin{matrix}I_1&I_2\\
	I_3&v
	\end{matrix}\right].
	\end{align}
	Now we insert $\a=-1+\d(1+\frac 1 2 \e^2)$, $\e\to 0$. For a sufficiently small $\phi_0$, we have
	\begin{align*}
	1+\xi=1+\a-\d\cos\phi=\frac{\d} 2(\phi^2+\e^2)+O(\phi^4),\quad \phi\in(0,\phi_0).
	\end{align*}
	Recall that $\xi(\pi,\a)=b$. Therefore, the following limit 
	\begin{align*}
	\lim_{\e\to 0}\int_{\phi_0}^\pi\frac{d\phi}{\sqrt{1-\xi^2}}
	\end{align*}
	exists. Thus, we have 
	\begin{align*}
	v=\int_0^\pi\frac{d\phi}{\sqrt{1-\xi^2}}\simeq\int_{0}^{\phi_0}\frac{d\phi}{\sqrt{1+\xi}}\simeq\int_{0}^{\phi_0}\frac{d\phi}{\sqrt{\d(\e^2+\phi^2)}}\simeq\int_{0}^{\phi_0/\e}\frac{dt}{\sqrt{1+t^2}}
	\simeq -\log \e.
	\end{align*}
	Also
	\begin{align*}
	I_1=\int_0^\pi\frac{\xi d\phi}{(1-\xi)\sqrt{1-\xi^2}}\simeq\int_{0}^{\phi_0}\frac{d\phi}{\sqrt{1+\xi}}\simeq -\log\e.
	\end{align*}
	Moreover, for $I_3$ we get
	\begin{align*}
	\lim_{\e\to 0}\int_0^\pi\sqrt{\frac{1+\xi}{1-\xi}}d\phi=\int_0^\pi\sqrt{\frac{\d(1-\cos\phi)}{2-\d+\d\cos\phi}}d\phi >0.
	\end{align*}
	As before, we can split up $I_2$ and get 
	\begin{align*}
	I_2\simeq\int_0^{\phi_0}\frac{ \xi d\phi}{(1-\xi^2)\sqrt{1-\xi^2}}&\simeq -\int_0^{\phi_0}
	\frac{  d\phi}{((\phi^2+\e^2))^{3/2}}\\
	&=-\frac 1{\e^2}\int_0^{\phi_0/\e}\frac{dt}{(1+t^2)^{3/2}}\\
	&\simeq-\frac {1}{\e^2}.
	\end{align*}
	Collecting all terms and inserting it into \eqref{eq:1June17} yield the claim. 
\end{proof}

\providecommand{\MR}[1]{}
\providecommand{\bysame}{\leavevmode\hbox to3em{\hrulefill}\thinspace}
\providecommand{\MR}{\relax\ifhmode\unskip\space\fi MR }
\providecommand{\MRhref}[2]{%
	\href{http://www.ams.org/mathscinet-getitem?mr=#1}{#2}
}
\providecommand{\href}[2]{#2}

\end{document}